\title{On the extreme values of the Riemann zeta function on random intervals of the critical line}
\author{
        Joseph \textsc{Najnudel}     \footnote{\texttt{joseph.najnudel@uc.edu}}
       } 
\def\Oc{{\mathcal O}}
\newtheorem{thm}{Theorem}[section]
\newtheorem{proposition}[thm]{Proposition}
\newtheorem{corollary}[thm]{Corollary}
\newtheorem{conjecture}[thm]{Conjecture}
\newtheorem{lemma}[thm]{Lemma}
\newtheorem{rmk}[thm]{Remark}
\numberwithin{equation}{section}
\begin{document}

\maketitle
\begin{abstract}
In the present paper, we show that under the Riemann hypothesis, and for fixed $h, \epsilon > 0$, the  supremum of the real and the imaginary parts of $\log \zeta (1/2 + it)$ for $t \in [UT -h, UT + h]$ are in the interval $[(1-\epsilon) \log \log T, 
(1+ \epsilon) \log \log T]$ with probability tending to $1$ when $T$ goes to infinity, $U$ being a uniform random variable in $[0,1]$. This proves a weak version of a conjecture by Fyodorov, Hiary and Keating, which has recently been intensively studied in the setting of random matrices. We also 
 unconditionally show that the supremum of $\Re \log \zeta(1/2 + it)$ is  
at most $\log \log T + g(T)$ with probability tending to $1$, $g$ being any function tending to infinity at infinity. 

\end{abstract}

\indent
\hrule
\tableofcontents
\indent
\hrule

\section{Introduction}
\label{section:introduction}
In relation with the Riemann hypothesis, the behavior of the function $\zeta$ at or near the critical line has been extensively studied, in particular its maximal order of magnitude. For example, a consequence of the Riemann hypothesis is 
the Lindel\"of hypothesis, which claims that for all $\alpha > 0$, $|\zeta(1/2 + it)| = \mathcal{O}(1 + |t|^{\alpha})$: in fact, this result can be improved to 
$|\zeta(1/2 + it)|  \ll \exp (\mathcal{O} (\log t / \log \log t))$ for $t > 3$  (see Titchmarsh \cite{Tit}, Theorem 14.14 (A)). If one does not assume the Riemann hypothesis, the best known result in this direction, due to Bourgain \cite{Bourgain}, is that the estimate is true for all $\alpha > 13/84$. 
On the other hand, it is also known (see Titchmarsh \cite{Tit}, p. 209) that for all $c <3/4$, $|\zeta(1/2 + it)| \geq \exp( c \sqrt{ \log t/\log \log t})$ for 
arbitrarily large values of $t$: this estimate has recently been improved to 
$ \exp( c \sqrt{ \log t \log \log \log t /\log \log t})$ for all $c < 1/\sqrt{2}$, in a paper by Bondarenko and Seip \cite{BS17}.

It has been conjectured by Farmer, Gonek, and Hughes \cite{FGH}, that the supremum of  $|\zeta(1/2 + it)| $ on $[0,T]$ is 
$\exp ( (1+o(1)) \sqrt{ (1/2)\log T \log \log T})$ for $T \rightarrow \infty$: their heuristics is related to the fact that the large values of $|\zeta|$ are expected
to be roughly independent when they occur at points which are sufficiently far from each other.

A classical result by Selberg \cite{Selberg} shows that for $t$ random, uniform on $[0,T]$, $\log |\zeta (1/2 + it)| /\sqrt{\log \log T}$ converges in law towards a  
centered Gaussian random variable of variance $1/2$, however, it does not give information about the dependence between the values of 
$\log |\zeta|$ at different points. It has been proven by Hughes, Nikeghbali and Yor \cite{HNY} that, in a sense which is made precise in their article, the values of 
 $\log |\zeta|$ are independent if they are taken at points which are very distant from each other, and on the other hand, Bourgade \cite{Bourgade} has proven that the values 
are correlated if they are taken at points which are close enough to each other. For example, for $t$ uniform on $[0,T]$, and $a \in [0,1]$, we have that $(\log |\zeta (1/2 + it)| /\sqrt{\log \log T},  \log |\zeta (1/2 + i(t + (\log T)^{-a}))| /\sqrt{\log \log T})$ tends to a two-dimensional centered Gaussian vector, whose correlation is 
equal to $a$. Moreover, in Section 4 of \cite{Bourgade}, the author explicitly makes the connection between this correlation structure and some branching processes. 

 In \cite{bib:FyKe} and \cite{bib:FHK}, Fyodorov, Hiary and Keating make heuristic computations suggesting that $\log |\zeta|$ behaves like a centered Gaussian field, whose correlation between random points in $[0,T]$ at distance $u \leq 1$ is given by $\min( |\log u|, \log  \log T)$. 
A more sophisticated random field is constructed by Saksman and Webb \cite{SaksWebb}, as a limiting random distribution for the function $\zeta$ itself. 

From the  comparison between $\log |\zeta|$ and the logarithm of the characteristic polynomial of the Circular Unitary Ensemble, which can both be considered as approximations of Gaussian log-correlated fields, and from some moment computations coming from techniques used in statistical mechanics, Fyodorov, Hiary and Keating have formulated a conjecture concerning the supremum of $\log |\zeta|$ on  random intervals of fixed length, which can be stated as follows: 
\begin{conjecture}
\label{conjecturemax1} For $U$ uniform on $[0,1]$ and $h > 0$ fixed, the family of random variables:
\begin{equation}
\left(\sup_{t \in [UT - h, UT + h]} \log \left|  \zeta \left( \frac{1}{2} + i t \right)  \right|  - 
\left(\log \log T - \frac{3}{4} \log \log \log T \right) \right)_{T > 3} \label{qwertyuiop0}
\end{equation}
tends to a limiting distribution when $T$ goes to infinity. 
\end{conjecture}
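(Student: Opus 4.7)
The plan is to transport the extreme-value theory of log-correlated Gaussian fields and branching random walks onto the critical line, following the Fyodorov--Hiary--Keating heuristic itself. The guiding picture is that, conditional on the random center $UT$, the process $s \mapsto \log|\zeta(1/2 + i(UT + s))|$ for $s \in [-h,h]$ should behave like a log-correlated Gaussian field with covariance $\min(\log(1/|s_1 - s_2|),\log\log T)$; for such fields the maximum over an interval of fixed length is known rigorously to be $\log\log T - \tfrac{3}{4}\log\log\log T + O(1)$, with a tight random correction whose law is a Gumbel convolved with the limit of a derivative martingale, exactly matching the conjectural family.

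Concretely, I would proceed in three steps. First, under the Riemann hypothesis, approximate $\log \zeta(1/2 + it)$ by a Dirichlet polynomial over primes $p \le T$, split dyadically into scales $X_k < p \le X_{k+1}$ with $X_k = \exp(e^k)$, producing a telescoping decomposition into $K \sim \log\log T$ increments $Y_k(t)$ exhibiting a branching-random-walk-type covariance: each $Y_k$ is essentially constant on intervals of length $1/\log X_k$ and the across-scale covariance is hierarchical, the viewpoint already advocated by Bourgade. Second, upper bound the supremum by a modified (barrier-controlled) moment argument \`a la Bramson--A\"id\'ekon, discarding those $t$ at which some partial sum $\sum_{j \le k} Y_j(t)$ overshoots a curve of the form $(k/K)\log\log T - \tfrac{3}{4}\log k + O(1)$; this barrier is precisely what produces the $-\tfrac{3}{4}\log\log\log T$ Bramson correction. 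Third, match with a lower bound via a truncated second moment on a partition function built from near-extremal candidates, and identify the limiting law as a Gumbel convolved with a derivative-martingale limit through a Seneta--Heyde / spinal decomposition.

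The main obstacle, and the reason the statement is phrased as a conjecture rather than a theorem, is that the $Y_k(t)$ are neither Gaussian nor exactly independent across scales, so importing the sharp branching-random-walk estimates requires either an $O(1)$-precise Gaussian coupling of $\log|\zeta|$ with a log-correlated field at depth $\log\log T$ --- far beyond what Selberg-type moment computations currently deliver --- or a genuinely arithmetic analogue of the Bramson barrier argument. Existing techniques yield only $o(\log\log T)$ precision, which is enough for the weaker $(1\pm\epsilon)\log\log T$ statement of the abstract (obtainable by a first/second moment on a logarithmic grid inside the window, combined with Selberg's central limit theorem) but not for the $\tfrac{3}{4}\log\log\log T$ subleading term nor for the identification of the limiting law. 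Establishing tightness at the top, i.e.\ ruling out anomalous accumulations of near-maximal points, would additionally require convergence of a suitably twisted multiplicative chaos built from $\zeta$ on the window, a result which at present is unavailable; so realistically my plan only delivers the weak $(1\pm\epsilon)$ version, and the full conjecture remains out of reach of the approach sketched here.
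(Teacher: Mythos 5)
Your assessment is correct, and the self-awareness at the end is the right call: the statement is labelled a \emph{conjecture} in the paper, and the paper does not prove it --- it is quoted for context. What the paper actually establishes (Theorem~\ref{main}, under RH) is precisely the weak form you describe, that the supremum lies in $[(1-\epsilon)\log\log T,(1+\epsilon)\log\log T]$ with probability $\to 1$, together with the unconditional upper bound $\log\log T + g(T)$ for the real part (Theorem~\ref{unconditional}). So there is no gap in the sense of a wrong step: you correctly identify the obstruction (no $O(1)$-precise Gaussian coupling, no arithmetic Bramson barrier), which is the same obstruction the paper's Questions section acknowledges when remarking that the method loses a $\log\log\log T$ factor and is therefore not expected to reach the $-\tfrac34\log\log\log T$ subleading term.

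For the weak version, your three-line sketch (``first/second moment on a logarithmic grid plus Selberg's CLT'') is in the right spirit but elides the points where the paper actually has to work. The scale decomposition in the paper is not the dyadic $K\sim\log\log T$ cut you propose; it is Kistler-style coarse graining into a \emph{fixed} number $K$ of pieces, with $K\to\infty$ taken only after $T\to\infty$, and the mollification is done by convolving $\log\zeta$ against a kernel $\varphi$ whose Fourier transform is compactly supported (Propositions~\ref{sumsprimes}, \ref{comparisonrandom}) so that the resulting Dirichlet polynomial is an honest finite sum over primes $\le e^H$ with $H\ll(\log T)^{1-\delta}$; this restriction on $H$ is what makes the $2r$-th moment computations with $r\asymp\log\log T$ possible (Lemma~\ref{lemmaSV}). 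The increments $S_0(k,m)$ are then matched to Gaussians $G(k,m)$ through Fourier--Laplace transforms at arguments growing like a small power of $\log T$ (Propositions~\ref{TV}--\ref{VG}), and Paley--Zygmund is applied to a product functional over $m=1,\dots,K-1$; the $m=0$ piece, carrying the small primes, is handled separately because it is genuinely non-Gaussian (Proposition~\ref{beginning}). A naive second moment on a grid of $\asymp\log T$ points, without the hierarchical split, does not close because neighbouring grid points are too strongly correlated. On the upper-bound side, the paper's unconditional argument for $\Re\log\zeta$ is of a different character from anything in your sketch: it uses the Riemann--Siegel formula to write $|\zeta(1/2+it)|^2$ locally as a trigonometric polynomial of frequency $\le\log(T/2\pi)$, dominates its sup on $[t_0-h,t_0+h]$ by its values at $\mathcal O_h(\log T)$ sample points (Propositions~\ref{Fourier}, \ref{xxxxxxxx}), and concludes with Hardy--Littlewood's second-moment estimate; the imaginary-part upper bound exploits the near-monotonicity of $\arg\zeta$ between zeros (Proposition~\ref{argzeta}) plus Soundararajan's mean-value lemma, rather than a barrier.
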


The study of the maximum of log-correlated Gaussian fields has been done in several different and fairly general settings.
   In the case of  branching Brownian motion and branching random walks,
 for which a tree structure is explicit, the study goes back to the seminal paper by Bramson \cite{Bra78}, and
more precise and  general  results have later been proven by A\"id\'ekon, Hu and Shi (see \cite{HSh09, AShi10, Aid11}). In the  case of  log-correlated Gaussian fields on $[0,1]^d$ ($d \geq 1$) a limit theorem has been 
proven by  Madaule  \cite{bib:Mad13}, and then generalized by Ding, Roy and Zeitouni \cite{RDZ15}. The discrete Gaussian Free Fields have been studied by 
Bramson, Ding and Zeitouni  in \cite{BZe10} and \cite{BDZ13}.

An analogy between the Riemann zeta function and the characteristic polynomial of random matrices has been developed in the last decades, following the 
idea by P\'olya and Hilbert that there may be a way to see the  non-trivial zeros of $t \mapsto \zeta(1/2 + it)$ as the eigenvalues of some Hermitian operator.  
In relation with this analogy, we can mention the results by Montgomery \cite{Montgomery}  on the pair correlation of zeros of $\zeta$, which have been observed by Dyson to be similar to 
what we obtain for eigenvalues of random Hermitian or unitary matrices, the conjecture by Keating and Snaith \cite{KSn00} on the moments of $\zeta$ on the critical line, and the limit theorems by Katz and Sarnak \cite{KatzSarnak} on analogs of the Riemann zeta function, constructed from algebraic curves on function 
fields.

In relation with this analogy, Fyodorov, Hiary and Keating have stated a version of Conjecture \ref{conjecturemax1} in the setting of random matrix theory, which says that
\begin{equation}
\left(\sup_{|z| = 1} \log \left| X_n(z)  \right|  - 
\left(\log n - \frac{3}{4} \log \log n \right) \right)_{n \geq 2} \label{qwertyuiop}
\end{equation} 
tends to a limiting distribution (namely, the law of the average of two independent Gumbel variables), if $X_n$ is the characteristic polynomial of a Haar-distributed, $n \times n$, random unitary matrix.

This conjecture is not fully proven, but Chhaibi, Madaule and Najnudel in \cite{CMN} have recently shown that the family \eqref{qwertyuiop} of random variables 
is tight, improving successive results by Arguin, Belius and Bourgade \cite{bib:ABB} and by Paquette and Zeitouni \cite{bib:PZ}. 

Proving Conjecture \ref{conjecturemax1}, or even only the tightness of \eqref{qwertyuiop0},  seems to be much more difficult than the corresponding results 
in the setting of random matrices. One of the reasons is that there is a priori less randomness in the setting of the Riemann zeta function. Indeed,  in \cite{ABH15}, Arguin, Belius and Harper consider a randomized version of the Riemann zeta function and  show a weaker version of an analog of Conjecture
\ref{conjecturemax1}, giving the two leading order terms $\log \log T - \frac{3}{4} \log \log \log T$. 
The main goal of the present paper is to show that one can progress towards Conjecture \ref{conjecturemax1} in the original setting of the Riemann 
zeta function, without extra randomness. More precisely, we will prove that under the Riemann hypothesis, the leading order term in Conjecture 
\ref{conjecturemax1} is the correct one, which correspond to the precision of the result by Arguin, Belius and Bourgade \cite{bib:ABB}  for random matrices. 

In all this paper, $\log \zeta$ will be defined as the unique version of the logarithm of zeta which is real on $(1, \infty)$, well-defined and continuous everywhere, 
except on the closed half-lines at the left of the zeros and the pole of $\zeta$. The values for which $\log \zeta$ is not well-defined will be considered to be 
implicitly excluded from all infima and suprema where they are involved. Note that despite this exclusion, we still have, for all $a < b$, 
$$\sup_{t \in [a,b]} \Re \log \zeta \left(\frac{1}{2} + it \right) = 
\sup_{t \in [a,b]}  \log \left| \zeta \left(\frac{1}{2} + it \right) \right|,$$
since $\log |\zeta|$ is continuous on the critical line except at the zeros of $\zeta$, where it is equal to $-\infty$ and then is not involved in the supremum in the right-hand side. 
With this convention, our main result is the following: 
\begin{thm} \label{main}
Let us assume the Riemann hypothesis. Then, if $h > 0$, $\epsilon > 0$ are fixed, if $T > 3$ and if $U$ is a uniform random variable in $[0,1]$, then 
$$\mathbb{P} \left( \sup_{t \in [UT -h, UT + h]} \Re \log \zeta \left(\frac{1}{2} + it \right)  \in [(1-\epsilon) \log \log T, (1+\epsilon) \log \log T] \right)
\underset{T \rightarrow \infty}{\longrightarrow} 1,$$
$$\mathbb{P} \left( \sup_{t \in [UT -h, UT + h]}  \Im \log \zeta \left(\frac{1}{2} + it \right)  \in [(1-\epsilon) \log \log T, (1+\epsilon) \log \log T] \right)
\underset{T \rightarrow \infty}{\longrightarrow} 1,$$
$$\mathbb{P} \left( \inf_{t \in [UT -h, UT + h]}  \Im \log \zeta \left(\frac{1}{2} + it \right)  \in [-(1+\epsilon) \log \log T,- (1-\epsilon) \log \log T] \right)
\underset{T \rightarrow \infty}{\longrightarrow} 1.$$
\end{thm}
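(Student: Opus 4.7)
The plan is to reduce the problem to the behaviour of a Dirichlet polynomial over primes, and then to prove the upper and lower bounds by two different moment arguments. Under the Riemann hypothesis, Selberg's explicit-formula machinery gives, uniformly for $t \in [T/2, 3T]$ and for $X = (\log T)^A$ with $A$ large enough,
$$\log \zeta\!\left(\tfrac12 + it\right) = \sum_{p \leq X} \frac{p^{-it}}{\sqrt p} + O(1) + E_X(t),$$
where the prime-power terms $n = p^k$ with $k \geq 2$ contribute $O(1)$ and $E_X$ is small in mean-square. Writing the Dirichlet polynomial on the right as a sum over dyadic blocks in the variable $\log p$ exposes essentially independent increments, each of variance of order $1$; this is the log-correlated/branching structure behind the Bourgade covariance $\min(|\log u|,\log\log T)$ recalled in the introduction, and it will drive the rest of the argument.

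For the upper bound, I would use moments of Dirichlet polynomials. Standard mean-value estimates give, for $k$ an integer with $k\log\log T = o(\log T)$,
$$\frac{1}{T}\int_0^T \Bigl|\, \Re \sum_{p \leq X} p^{-1/2-it}\, \Bigr|^{2k}\, dt \leq \frac{(2k)!}{2^{k} k!}\left(\tfrac12 \log\log T\right)^{k}(1+o(1)),$$
with the same bound for the imaginary part. Taking $k \asymp \log\log T$, applying Markov's inequality, then union-bounding over a $(\log T)^{-10}$-net in $[UT-h,UT+h]$ and controlling the oscillation between grid points by a crude Lipschitz estimate (available under RH from the local zero count), one obtains
$$\sup_{t \in [UT-h,UT+h]} \Re \log \zeta(\tfrac12 + it) \leq (1+\epsilon)\log\log T$$
with probability tending to $1$, and the analogous upper bounds for $\Im \log \zeta$ and $-\Im \log \zeta$.

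For the lower bound I would fix $N = \lfloor (\log T)^{1+\epsilon/2}\rfloor$ equally spaced points $s_1,\dots,s_N$ in $[UT-h,UT+h]$ and, for each $i$, consider the event $A_i$ that the partial sum $\Re \sum_{p\leq X} p^{-1/2-is_i}$ is at least $(1-\epsilon/2)\log\log T$ while every dyadic increment stays inside its typical range (a barrier condition). A tilted second-moment (Girsanov-type) argument modelled on the lower-bound strategy of \cite{ABH15} for a randomised analogue of $\zeta$ should yield $\mathbb{P}(A_i) \gtrsim (\log T)^{-1-\epsilon/3}$. The heart of the argument is then the cross sum $\sum_{i,j}\mathbb{P}(A_i \cap A_j)$: using the branching structure, the correlation between $A_i$ and $A_j$ decouples at the scale $|s_i - s_j|$, and summing geometrically gives $\sum_{i,j}\mathbb{P}(A_i \cap A_j) \leq (1+o(1))\bigl(\sum_i \mathbb{P}(A_i)\bigr)^2$. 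Paley--Zygmund then delivers $\mathbb{P}(\bigcup_i A_i) \to 1$, and the Selberg approximation translates this into the matching lower bound on $\sup \Re \log \zeta$. For $\sup \Im \log \zeta$ and $\inf \Im \log \zeta$ one repeats the argument with $\cos(t\log p)$ replaced by $\pm\sin(t\log p)$, which have the same Gaussian moment structure.

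The main obstacle I expect lies in this last second-moment computation for very close pairs $(s_i,s_j)$: the contribution of primes near $p = T$ to the finest dyadic increments has to be controlled by a Montgomery--Vaughan-type mean-value estimate whose error terms are of the same order as the diagonal unless one averages $t$ over a window much larger than $2h$. It is precisely the randomisation over $U \in [0,1]$ that supplies this long-range averaging unconditionally, turning a delicate statement about local zero statistics into one that holds with probability tending to $1$.
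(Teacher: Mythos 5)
Your outline captures the right high-level shape (reduce to a Dirichlet polynomial over primes, moment bounds plus a union bound for the upper bound, a second-moment method for the lower bound), but the way you enter the Dirichlet-polynomial world is the step that the paper has to work hardest to circumvent, and the version of it you wrote down is not available.

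\textbf{The main gap: the pointwise reduction.} You assert that, under RH and with $X=(\log T)^A$, one has
$\log\zeta(\tfrac12+it)=\sum_{p\le X}p^{-1/2-it}+O(1)+E_X(t)$ with $E_X$ ``small in mean-square.'' No formula of this strength exists at $\sigma=\tfrac12$. The Selberg/Tsang formulas produce, besides the prime sum, an error term that either involves the zeros within distance $\asymp 1/\log X$ of $\tfrac12+it$ or a term of size $\log t/\log X$, which with $X=(\log T)^A$ is $\asymp\log T/\log\log T$ --- far larger than the $\log\log T$ scale being resolved. And ``small in mean-square'' cannot be upgraded to control suprema: an $L^2$-bounded $E_X$ could contribute $\gg\log T$ on a sparse set, wrecking both bounds. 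The paper replaces this step by two genuinely different moves. First, Proposition \ref{sumsprimes} approximates \emph{averages} $\int\log\zeta(\tfrac12+i(\tau+tH^{-1}))\varphi(t)\,dt$ (with $\widehat\varphi$ compactly supported) by finite prime sums, which under RH has an error of size $O_\varphi(1+e^{O_\varphi(H)}/(1+|\tau|))$ --- clean precisely because the pole at $1$ is the only obstruction between $\Re s=\tfrac12$ and $\Re s=2$. Second, and this is where the real content lies, the paper proves one-sided comparisons between $\sup\log\zeta$ and $\sup$ of these averages: Proposition \ref{argzeta} (monotonicity of $N(t)$) shows the averages control $\sup|\Im\log\zeta|$ from above (Lemmas \ref{un}--\ref{trois}, Proposition \ref{B9}); Proposition \ref{comparisonrandom} uses nonnegativity of $\varphi$ to show that averaging a positive part can only decrease it, giving the lower bound. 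Without establishing these one-sided inequalities, there is no way to transport information between the prime sum and $\log\zeta$ itself.

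\textbf{Secondary issues worth flagging.}
(a) The ``crude Lipschitz estimate'' for $\Re\log\zeta(\tfrac12+it)$ does not exist: $\log|\zeta|\to-\infty$ at each zero on the line. The paper sidesteps this by proving the real-part upper bound \emph{unconditionally} (Theorem \ref{unconditional}), working with $|\zeta(\tfrac12+it)|^2$, which is continuous, nonnegative, and --- via the Riemann--Siegel formula --- approximately band-limited with bandwidth $\log(T/2\pi)$. A Nyquist-type sampling inequality (Proposition \ref{Fourier}) then dominates the sup by $O_h(\log T)$ samples, and Hardy--Littlewood's second moment finishes it. That is a different and more elementary mechanism than a net plus a Lipschitz bound.
(b) Your diagnosis that randomization over $U$ cures the close-pair correlation problem misattributes its role; in the paper, $U$-uniformity is what turns $\frac1T\int_T^{2T}$ into a probability and hence makes Soundararajan's mean-value Lemma \ref{Lemma3Sound} a moment bound. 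Close pairs are handled by the coarse-graining into $K$ blocks together with the covariance decay $|\E[G(k,m)G(\ell,m)]|\ll_h H^{1-m/K}/|k-\ell|$.
(c) The paper truncates primes at $e^H$ with $H=\lfloor(\log T)^{1-\delta}\rfloor$, markedly \emph{smaller} than your $(\log T)^A$, precisely so that exponential moments up to order $\sim\log\log T$ can be controlled (see Lemma \ref{lemmaSV} and the discussion in the introduction). With primes up to $(\log T)^A$ you lose that control, and your second-moment computation would not close.

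\end{document}
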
 
In the proof of this theorem, the Riemann hypothesis is in fact used only once, namely in Proposition \ref{sumsprimes}, where we approximate an averaged version of $\log \zeta$ by a sum indexed by prime powers. If we do not assume the Riemann hypothesis, we get some extra error terms which seem difficult to estimate in general. 

However, we have an unconditional and 
stronger result for the upper bound of the real part of $\log \zeta$: 
\begin{thm} \label{unconditional} 
Let $h > 0$, and let $g$ be a function from $[3,\infty)$ to $\mathbb{R}_+$, tending to infinity at infinity. Then, for $T > 3$ and $U$ uniform on $[0,1]$, 
$$ \mathbb{P} \left( \sup_{t \in [UT -h, UT + h]} \Re \log \zeta \left(\frac{1}{2} + it \right) \leq  \log \log T + g(T) \right)
\underset{T \rightarrow \infty}{\longrightarrow} 1.$$
\end{thm}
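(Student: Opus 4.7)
The plan is to deduce the supremum bound from a Sobolev-type inequality combined with the classical unconditional mean-square estimates for $\zeta$ and $\zeta'$ on the critical line. For any $C^1$ function $f : I \to \mathbb{C}$ on an interval $I$ of length $2h$ and any $t_0, s \in I$, the identity $|f(t_0)|^2 - |f(s)|^2 = 2 \int_s^{t_0} \Re(\overline{f}\, f')\, du$ yields $|f(t_0)|^2 \leq |f(s)|^2 + 2 \int_I |f||f'|\, du$; averaging over $s \in I$ and applying Cauchy--Schwarz,
\[
\sup_{t \in I} |f(t)|^2 \;\leq\; \frac{1}{2h} \int_I |f(u)|^2\, du \;+\; 2 \Big(\int_I |f|^2\, du\Big)^{1/2} \Big(\int_I |f'|^2\, du\Big)^{1/2}.
\]
Applied to $f(t) = \zeta(1/2+it)$ on $I = [UT-h, UT+h]$, and noting $|f'(t)| = |\zeta'(1/2+it)|$, the problem reduces to estimating the integrals $\int_I |\zeta|^2\, dt$ and $\int_I |\zeta'|^2\, dt$.

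Next I invoke the classical unconditional asymptotics $\int_0^T |\zeta(1/2+it)|^2\, dt \sim T \log T$ (Hardy--Littlewood) and $\int_0^T |\zeta'(1/2+it)|^2\, dt \sim \tfrac{1}{3} T (\log T)^3$ (Ingham), together with Fubini and the symmetry $|\zeta(\overline{s})| = |\zeta(s)|$ to handle the edges of $[0,T]$, to obtain
\[
\mathbb{E}_U \!\int_I |\zeta(1/2+it)|^2\, dt \;\ll\; h \log T, \qquad \mathbb{E}_U \!\int_I |\zeta'(1/2+it)|^2\, dt \;\ll\; h (\log T)^3.
\]
Markov's inequality at threshold $e^{g(T)}$ combined with a union bound shows that, with probability at least $1 - O(e^{-g(T)})$, the two integrals are bounded respectively by $Ch\log T\, e^{g(T)}$ and $Ch(\log T)^3\, e^{g(T)}$.

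Substituting these estimates into the Sobolev inequality, on this high-probability event
\[
\sup_{t \in I}|\zeta(1/2+it)|^2 \;\ll\; (\log T)\, e^{g(T)} \;+\; h(\log T)^2\, e^{g(T)} \;\ll\; h(\log T)^2\, e^{g(T)},
\]
so $\sup_I |\zeta(1/2+it)| \ll \sqrt{h}\, (\log T)\, e^{g(T)/2}$, hence $\sup_I \Re \log \zeta(1/2+it) \leq \log\log T + g(T)/2 + O(1)$. Since $g(T) \to \infty$, the right-hand side is eventually $\leq \log\log T + g(T)$, while the exceptional probability is $O(e^{-g(T)}) \to 0$, proving the theorem. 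The argument is essentially soft; the only nontrivial inputs are the classical second-moment asymptotics for $\zeta$ and $\zeta'$, both unconditional, so the Riemann hypothesis plays no role here, in sharp contrast with the lower bounds in Theorem \ref{main} that go through Proposition \ref{sumsprimes}. The slight mismatch between $g(T)/2 + O(1)$ and $g(T)$ is harmless precisely because $g$ is an arbitrary function tending to infinity; there is no serious obstacle to overcome in this proof.
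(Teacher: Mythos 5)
Your proof is correct, and it takes a genuinely different route from the one in the paper. The paper's argument is based on an approximate \emph{sampling} phenomenon: via the Riemann--Siegel formula, $|\zeta(\tfrac12+it)|^2$ is locally close (up to a bounded error) to a trigonometric polynomial whose frequencies are confined to $[-\log(T/2\pi),\log(T/2\pi)]$, and then a Poisson-summation lemma shows such a bandlimited function is controlled by its values at an arithmetic progression of step $\pi/(2\log(T/2\pi))$. This reduces the supremum over $[UT-h,UT+h]$ to a weighted sum over $\mathcal{O}_h(\log T)$ sample points, after which only the Hardy--Littlewood second moment of $\zeta$ and Markov's inequality are needed. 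Your argument replaces that sampling step by the elementary Sobolev-type inequality
\[
\sup_{t\in I}|f(t)|^2 \le \frac{1}{|I|}\int_I |f|^2 + 2\Bigl(\int_I|f|^2\Bigr)^{1/2}\Bigl(\int_I|f'|^2\Bigr)^{1/2},
\]
which trades the Riemann--Siegel machinery for one extra classical input, namely the unconditional bound $\int_0^T|\zeta'(\tfrac12+it)|^2\,dt \ll T(\log T)^3$. The exponent bookkeeping then goes through exactly as you present it: $(\log T)\cdot h(\log T)^3 = h(\log T)^4$ under the square root gives $\sup_I|\zeta|\ll_h(\log T)e^{g(T)/2}$, and the lost factor $g(T)/2+\mathcal{O}_h(1)$ is absorbed into $g(T)$ for $T$ large. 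Your approach is shorter and avoids the structural sampling lemma, at the cost of needing the derivative moment; the paper's approach keeps the input to the single second moment of $\zeta$ and gives a pointwise statement (Proposition~\ref{xxxxxxxx}) of independent interest that mirrors their earlier result in \cite{CMN} on discretizing the maximum of characteristic polynomials.
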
 
The part of Theorem \ref{main} concerning $\Im \log \zeta$ gives some information on the fluctuations of the distribution of the zeros of $\zeta$.
From Titchmarsh \cite{Tit}, Theorem 9.3., we deduce the following: 
\begin{corollary}
Assume the Riemann hypothesis. For $t > 0$, let $N(t)$ be the number of non-trivial zeros of $\zeta$ whose imaginary part is in $[0,t]$, and let 
$$\Delta(t) := N(t) - \frac{t \log t}{2 \pi} + \frac{t ( 1 + \log (2 \pi))}{2 \pi}.$$
 Then, if $h > 0$, $\epsilon > 0$ are fixed, if $T > 3$ and if $U$ is a uniform random variable in $[0,1]$, 
$$\mathbb{P} \left( \sup_{t \in [\max(UT -h,0), UT + h]} \Delta(t)  \in \left[\frac{1-\epsilon}{\pi} \log \log T, \frac{1+\epsilon}{\pi}\log \log T \right] \right)
\underset{T \rightarrow \infty}{\longrightarrow} 1,$$
$$\mathbb{P} \left(  \inf_{t \in [\max(UT -h,0), UT + h]} \Delta(t)   \in \left[-\frac{1+\epsilon}{\pi}\log \log T,- \frac{1-\epsilon}{\pi} \log \log T \right] \right)
\underset{T \rightarrow \infty}{\longrightarrow} 1.$$
\end{corollary}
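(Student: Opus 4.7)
The plan is to reduce the corollary directly to Theorem~\ref{main} via Titchmarsh's explicit formula for the counting function, together with the translation $\Delta = S + \text{const} + o(1)$.

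First, recall Theorem~9.3 of \cite{Tit}: for $t \geq 1$ away from ordinates of zeros of $\zeta$,
$$N(t) = \frac{t}{2\pi} \log \frac{t}{2\pi} - \frac{t}{2\pi} + \frac{7}{8} + S(t) + O(1/t),$$
where $S(t) = \frac{1}{\pi} \Im \log \zeta(1/2 + it)$ under a continuous determination of $\log \zeta$ along horizontal paths. Substituting this into the definition of $\Delta(t)$ and cancelling the polynomial contributions gives the pointwise identity
$$\Delta(t) = \frac{1}{\pi} \Im \log \zeta\left(\frac{1}{2} + it\right) + \frac{7}{8} + O(1/t),$$
valid uniformly for $t \geq 1$ (away from the zeros).

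Next, I would dispose of the two small discrepancies between this corollary and Theorem~\ref{main}. The interval $[\max(UT-h,0),\, UT+h]$ differs from $[UT-h,\, UT+h]$ only on the event $\{UT < h\}$, whose probability is $h/T \to 0$; hence it suffices to work on its complement. On that event, the error $O(1/t)$ is uniformly $o(1)$ as $T \to \infty$. Taking suprema and infima over the interval, with the same convention as in Theorem~\ref{main} of discarding the points where $\log \zeta$ is not defined, converts the pointwise identity into
$$\sup_t \Delta(t) = \frac{1}{\pi} \sup_t \Im \log \zeta\left(\frac{1}{2}+it\right) + \frac{7}{8} + o(1),$$
and analogously for the infimum.

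Finally, I would apply Theorem~\ref{main} (the second and third statements) with a slightly smaller parameter $\epsilon' \in (0, \epsilon)$, for instance $\epsilon' = \epsilon/2$. Dividing by $\pi$ and adding the deterministic $7/8 + o(1)$, the resulting random interval is contained in $\left[\frac{1-\epsilon}{\pi}\log\log T,\, \frac{1+\epsilon}{\pi}\log\log T\right]$ with probability tending to $1$, because the additive $O(1)$ term is absorbed by the slack $\frac{\epsilon - \epsilon'}{\pi}\log\log T \to \infty$. There is no substantial obstacle in the argument; the only point deserving care is checking that the classical convention for $S(t)$ entering Titchmarsh's formula coincides with the continuous determination of $\log \zeta$ fixed in the introduction, so that the identification $S(t) = \frac{1}{\pi}\Im \log \zeta(1/2 + it)$ used above is legitimate and compatible with the supremum/infimum convention already adopted in Theorem~\ref{main}.
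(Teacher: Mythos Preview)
Your proposal is correct and is exactly the argument the paper has in mind: the paper does not give a separate proof but simply states that the corollary follows from Titchmarsh, Theorem~9.3, applied to Theorem~\ref{main}, and you have spelled out precisely that deduction (the identity $\Delta(t)=\pi^{-1}\Im\log\zeta(1/2+it)+O(1)$, discarding the negligible event $\{UT\le h+1\}$, and absorbing the bounded additive term by shrinking $\epsilon$). The only cosmetic point is that the paper uses the cruder $O(1)$ remainder rather than your $7/8+O(1/t)$, but either suffices.
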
 
As mentioned before, the accuracy of our results is the same as in the paper by Arguin, Belius and Bourgade \cite{bib:ABB} on the 
maximum of the characteristic polynomial of the Circular Unitary Ensemble. In the random matrix setting, 
Paquette and Zeitouni \cite{bib:PZ} and Chhaibi, Madaule and Najnudel \cite{CMN} have successively improved this accuracy. Similar improvements seem to be very difficult to obtain in the setting of the Riemann zeta function.

The proof of our main theorem is divided into several parts.

 The upper bound for the real part is covered by Theorem \ref{unconditional}, which is proven by showing that the supremum of $|\zeta|$ in the segment  $[1/2 + i (UT -h), 1/2 + i( UT + h)]$ is well-controlled by the values of $|\zeta|$ at  about $\log T$ points of the segment. Then, we conclude by using classical estimates of the second moment of $|\zeta|$ on the critical line. 

For the upper bound of the imaginary part, we  prove that the supremum of $\Im \log \zeta$ on the same segment is controlled by the supremum of 
some averages of $\Im \log \zeta$  around about $\log T$ points. Then, we show that these averages are close to finite sums indexed by prime numbers, for which 
we give suitable bounds on the tail of their distribution. 

For the lower bound, we use the fact that averaging $\Re \log \zeta$ or $\Im \log \zeta$ essentially decreases its supremum, up to some error terms 
which can be controlled. We then deduce that it is sufficient to get lower bounds on some sums indexed by primes which are explicitly given. These sums are proven to be sufficiently close, in the sense of their Fourier transform, to Gaussian variables with the same covariance structure. After cutting the sum  into smaller pieces in order to get some approximate branching structure, we obtain a way to apply the second moment method, as for the lower bound on branching random walks. The general principle of this method can be found in the lecture notes by Kistler \cite{bib:Kistler}, and its spirit goes back to 
the work by Bolthausen, Deuschel and Giacomin \cite{BDG}, in their study of the extreme values of the two-dimensional Gaussian free field. 

The sequel of the paper is structured as follows.  In Section \ref{section:unconditional}, we give a proof of Theorem \ref{unconditional}. 
In Section \ref{section:average}, we show that if we average the logarithm of the Riemann zeta function around points of the critical line in a suitable way, then we get 
something close to a finite sum indexed by prime numbers which is explicitly given. In Section \ref{section:upperboundimaginary}, we prove the upper bound part of Theorem \ref{main}. In Section \ref{section:sum}, we use the results in Section \ref{section:average} in order to bound from below the supremum of $\Re \log \zeta$ or
$\Im \log \zeta$ in terms on a sum on prime numbers which is more tractable then the  one given in Section \ref{section:average}. 
  In Section \ref{section:gaussian},  we cut the sum obtained in Section \ref{section:sum} into smaller pieces and show that their joint law is close, in a sense which can be made precise, to a Gaussian family of variables. In Section \ref{section:lowerbound}, we finally use the second moment method in order to prove the lower bound part of Theorem \ref{main}. 

The results proven in the Sections  \ref{section:unconditional},  \ref{section:average}, \ref{section:upperboundimaginary} and  \ref{section:sum} are specific to the setting of the Riemann zeta function: most of the ideas given in these sections are new, even if we also use very classical tools from analytic number theory. Our proof of Theorem \ref{unconditional} is obtained by showing that the
maximum of $|\zeta|$ on $[UT-h, UT + h]$ can be essentially controlled by the maximum on $\mathcal{O}_h (\log T)$ points 
of this interval: this fact can be related to a result proven in \cite{CMN}, which states that the maximal modulus of a polynomial of degree $n$ on the unit circle is uniformly dominated by its maximal modulus on the $2n$-th roots of unity. 

The general method used in  Sections \ref{section:gaussian} and \ref{section:lowerbound} in order to prove the lower bound of the maximum is very classical and its main steps can be found in several papers on the maximum of log-correlated fields, including the paper by Arguin, Belius and Bourgade \cite{bib:ABB} on the leading order term of the maximum of the characteristic polynomial of the CUE on the unit circle. 
\subsection*{Remark} 
After the first submission of the present paper to arXiv.org, our result concerning the real part of zeta has been proven by Arguin, Belius, Bourgade, Radziwi\l\l  \, and Soundararajan \cite{ABBRS}, without assuming the Riemann hypothesis. In particular, they have also proven Theorem \ref{unconditional} (with a shorter proof than ours). 
\subsection*{Questions} 
Some questions can be asked about how the results of this paper can be improved or generalized:
\begin{itemize}
\item   It is natural to try to remove the Riemann hypothesis also in the case of the imaginary part of $\log \zeta$. We have a priori no idea about how the method 
used in \cite{ABBRS} can be adapted to this case, since the value of $\Im \log \zeta$ cannot be directly related to the value of $\zeta$ at the same point. It may also be possible to estimate the extra error terms  in the formula of Proposition \ref{sumsprimes}, which arise when we take into account the zeros of $\zeta$ at the right of the critical line, but our attempts to get good bounds for these extra terms have not succeeded until now.
\item In the statement of the main result, we consider a uniform variable on $[0,1]$. This particular choice of a probability distribution is somehow arbitrary. However, it is an exercise to deduce, from Theorem \ref{main}, the same result for 
any random variable $U$ whose distribution is absolutely continuous with respect to the Lebesgue measure. 
\item Similar problems can be considered for the maximum of more general $L$ functions. In the case of Dirichlet $L$ functions, we expect that a similar result holds, with almost the same proof as for $\zeta$, even if we did not check all the details. 
\item In Theorem \ref{main}, the length of the random interval we consider is constant. One can wonder what happens if we change this interval to an interval whose length varies with $T$. It is reasonable to expect that our method can be used  to get lower and upper bounds for the extreme values of $\log \zeta$: however, there is no reason a priori to think that the leading order of the lower and the upper bounds always match. It can be interesting to see more precisely what happens.
\item One can of course also wonder if it is possible to improve the accuracy of our result. In our proof of the lower bound, we have to study truncated sums on primes as $S_0(k,m)$, and show, by estimating their exponential moments,  that they can be compared with Gaussian variables. Since 
the large values of the sums on primes are of order $\log \log T$, it is not possible to do the truncation much below $\log \log T$, and then, in order to estimate the exponential moments, we need to control moments of sums like $S(k,m)$, up to an exponent of order $\log \log T$. On the other hand, with our method, such moments can be controlled only if all the products of primes which are involved are much smaller than $T$: we can see that, for example, by looking at the proof of Lemma \ref{lemmaSV}. 
This forces to take the averaging scale $H$ of $\log \zeta$ below $\log T/\log \log T$, whereas the natural  speed at which $\log \zeta$ moves on the critical line is of order $\log T$. This loss of a factor $\log \log T$ gives a loss of a term of order $\log \log \log T$ in  the maximum. Hence, we expect that our method cannot give the second order term 
with the exact coefficient $-3/4$, as in the paper by Paquette and Zeitouni \cite{bib:PZ}. However, we do not exclude that more careful estimates and finer cuts of the sums on primes (by letting the parameter $K$ slowly grow to infinity) can give 
lower bounds of the form $\log \log T - A \log \log T$ for some constant $A > 3/4$. 
\end{itemize}

\subsection*{Acknowledgements}
The author would like to thank R. Chhaibi for  helpful discussions we had on the problem solved in the present article, and the referees for their comments and suggestions, which have greatly improved the writing of this paper. One of the referees suggested  some of  the questions stated above. 
\subsection*{Notation} 
In the present paper, we use the Vinogradov notation
$$ f \ll g \Leftrightarrow f = \Oc(g) \ , $$
and when the implicit constant depend on parameters (e.g, on a positive real $h$ and a function $\varphi$), this dependence will be indicated thanks to subscripts (e.g $\ll_{\varphi,h}$ or $\mathcal{O}_{\varphi, h}$). The Fourier transform is normalized as follows: 
$$ \widehat{\varphi}(\lambda)= 
\int_{-\infty}^{\infty} e^{-i \lambda x} \varphi(x) dx.$$
Finally, $\mathcal{P} = \{2,3,5,7,\dots\}$ denotes the set of prime numbers. 

\section{Proof of Theorem \ref{unconditional}}
\label{section:unconditional}
In \cite{CMN}, in order to get an upper bound for the maximal modulus of the characteristic polynomial, we show that the maximal modulus, on the unit circle, of any polynomial of 
degree $n$, is at most $14$ times the maximal modulus on the $2n$-th roots of unity ($14$ is not optimal). In this section, we show a quite similar result, 
except that the unit circle is replaced by the real line, and the polynomials of a given degree are replaced by linear combinations of complex exponentials whose frequencies are
supported by a given compact set. The following lemma shows that such functions are uniquely determined and well-controlled by their values on a discrete set of points:
\begin{lemma}
There exists a universal function $\varphi$ from $\mathbb{R}$ to $\mathbb{R}$, continuous and decaying faster than any power at infinity, satisfying the following property: if $\lambda > 0$ and if $f$ be a finite linear combination of 
functions of the form $x \mapsto e^{i \mu x}$ with $|\mu| \leq \lambda$, then for all $x \in \mathbb{R}$, 
$$f(x) = \sum_{k \in \mathbb{Z}} 
\varphi((2\lambda x/\pi) - k ) f(k\pi/2\lambda).$$
\end{lemma}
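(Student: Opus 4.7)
The plan is to recognize the claim as a Shannon–Whittaker sampling theorem, in the regime where we oversample by a factor of two (the Nyquist spacing for band-limit $[-\lambda,\lambda]$ is $\pi/\lambda$, whereas the stated sampling spacing is $\pi/(2\lambda)$). The oversampling is what allows the reconstruction kernel $\varphi$ to be Schwartz rather than $\operatorname{sinc}$, and in particular to decay faster than any power.

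First I would normalize. Setting $y = 2\lambda x/\pi$, any $f$ that is a finite linear combination of $e^{i\mu x}$ with $|\mu|\leq\lambda$ becomes $g(y)=f(y\pi/(2\lambda))$, a finite linear combination of $e^{i\mu' y}$ with $|\mu'|\leq \pi/2$, and the sample points $k\pi/(2\lambda)$ become the integers. So it suffices to prove the identity in the normalized form $g(y)=\sum_k \varphi(y-k)g(k)$ for $g$ band-limited to $[-\pi/2,\pi/2]$. Next I would construct $\varphi$ once and for all: fix an even real-valued $C^\infty$ function $\widehat{\varphi}$ equal to $1$ on $[-\pi/2,\pi/2]$ and vanishing outside $(-\pi,\pi)$, and define
$$\varphi(x)=\frac{1}{2\pi}\int_{-\infty}^{\infty}\widehat{\varphi}(\xi)\,e^{i\xi x}\,d\xi.$$
Since $\widehat{\varphi}$ is smooth and compactly supported, $\varphi$ is a real-valued Schwartz function, hence continuous and decaying faster than any power at infinity, and it is universal (independent of $\lambda$ and of $f$).

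By linearity, it is enough to verify $\sum_k \varphi(y-k)e^{i\mu k}=e^{i\mu y}$ for each fixed $\mu$ with $|\mu|\leq \pi/2$. Applying the Poisson summation formula to the Schwartz function $s\mapsto \varphi(y-s)e^{i\mu s}$ (whose Fourier transform in $s$ is $\xi\mapsto e^{i(\mu-\xi)y}\widehat{\varphi}(\mu-\xi)$) gives
$$\sum_{k\in\mathbb{Z}} \varphi(y-k)\,e^{i\mu k}=\sum_{n\in\mathbb{Z}} \widehat{\varphi}(\mu+2\pi n)\,e^{i(\mu+2\pi n)y}.$$
For $|\mu|\leq\pi/2$ and $n\neq 0$ one has $|\mu+2\pi n|\geq 3\pi/2>\pi$, so $\widehat{\varphi}(\mu+2\pi n)=0$; the $n=0$ term equals $e^{i\mu y}$ because $\widehat{\varphi}(\mu)=1$. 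Undoing the normalization $y=2\lambda x/\pi$ recovers the lemma as stated.

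There is no real obstacle here: the whole proof rests on a standard Poisson summation identity and the elementary fact that a smooth compactly supported function has Schwartz inverse Fourier transform. The only care needed is in the choice of $\widehat{\varphi}$ (it must vanish outside $(-\pi,\pi)$ to kill the aliasing terms and equal $1$ on $[-\pi/2,\pi/2]$ to reproduce the in-band exponentials), and in the justification of Poisson summation, which is immediate because the function to which it is applied is Schwartz.
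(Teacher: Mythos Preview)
Your proof is correct and follows essentially the same approach as the paper: construct $\varphi$ as the inverse Fourier transform of a smooth even bump equal to $1$ on $[-\pi/2,\pi/2]$ and supported in $(-\pi,\pi)$, reduce by linearity to pure exponentials, and apply Poisson summation so that only the $n=0$ alias survives. The only cosmetic difference is that you normalize first to band $[-\pi/2,\pi/2]$ and integer samples, whereas the paper carries the parameters $\lambda$ and $x$ through the Poisson computation directly.
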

\begin{proof}
We take for $\varphi$ a function whose Fourier transform is real and even (then $\varphi$ is real-valued),  equal to $1$ in a neighborhood of $[-\pi/2,\pi/2]$ and to $0$ in a neighborhood of $\mathbb{R} \backslash (-\pi, \pi)$, smooth (hence $\varphi$ is rapidly decaying).

Once $\varphi$ is fixed, it is enough, by linearity, to check the following equality:
$$e^{i \mu x} =  \sum_{k \in \mathbb{Z}} 
\varphi((2\lambda x/\pi) - k )e^{i \mu k\pi/2\lambda}$$
for all $\mu \in [-\lambda, \lambda]$, or equivalently, 
$$ e^{i \mu x} =  \sum_{k \in \mathbb{Z}}  \psi(k),$$
where
$$\psi(y) = \varphi((2\lambda x/\pi) - y )e^{i \mu y\pi/2\lambda},$$
and then 
$$\widehat{\psi} (\theta) = e^{ix(  \mu - 2 \lambda \theta /\pi)} \widehat{\varphi} (- \theta + \mu \pi/2 \lambda). $$
Since $\psi$ is smooth and rapidly decaying at infinity, we can apply Poisson summation formula: 
$$  \sum_{k \in \mathbb{Z}}  \psi(k) = 
 \sum_{k \in \mathbb{Z}}  \widehat{\psi} (2 \pi k).$$
Now, 
$$\widehat{\psi} (0) =  e^{ix \mu}  \widehat{\varphi} ( \mu \pi/2 \lambda) =   e^{ix \mu}$$
since $| \mu \pi/2 \lambda | \leq \pi/2$, and for $k \neq 0$,
$$ \widehat{\psi} (2 \pi k) = e^{ix(  \mu - 4 \lambda k)} \widehat{\varphi} (- 2 \pi k + \mu \pi/2 \lambda) = 0$$
since 
$$ | - 2 \pi k + \mu \pi/2 \lambda| \geq 2 \pi - \pi/2 > \pi.$$
 \end{proof}
\begin{rmk}
This proposition can easily be extended to all bounded, continuous functions whose Fourier transform, in the sense of the distributions, is supported in $[-\lambda, \lambda]$. 
\end{rmk}
 A consequence of the lemma is the following: 
 \begin{proposition} \label{Fourier}
 For all $A > 1$, for any function $f$ satisfying the assumptions of the previous lemma, and  for all $x_0 \in \mathbb{R}$, $h, R > 0$,  
$$\sup_{x \in [x_0 - h, x_0 + h]} 
|f(x)|
\ll_A \left( \sum_{k \in \mathbb{Z}, |k| \leq  \lambda h }
|f( x_0 + (k \pi/2 \lambda))|
 + \sum_{k \in \mathbb{Z}, |k| \leq  R } \frac{|f( x_0 + (k \pi/2 \lambda))|}{1 + |k|^A}
 + \frac{\sup_{\mathbb{R}}|f|}{1 + 
 R^{A-1}} \right).$$
 \end{proposition}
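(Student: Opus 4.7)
The plan is to feed the sampling identity of the previous lemma into the translated function $g(y) := f(x_0 + y)$. Since $g$ is itself a finite linear combination of characters $e^{i\mu y}$ with $|\mu| \leq \lambda$ (the phases $e^{i\mu x_0}$ are merely absorbed into the coefficients), applying the lemma to $g$ yields
$$f(x_0 + y) = \sum_{k \in \Z} \varphi\bigl((2\lambda y/\pi) - k\bigr)\, f\bigl(x_0 + k\pi/(2\lambda)\bigr)$$
for every $y \in \R$, in particular for $y \in [-h, h]$. Taking absolute values and setting $s := 2\lambda y/\pi$ (so that $|s| \leq 2\lambda h/\pi < \lambda h$), the proposition reduces to a uniform estimate of $\sum_{k \in \Z} |\varphi(s-k)| \cdot |f(x_0 + k\pi/(2\lambda))|$.

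I then plan to split $\Z$ into three index regimes. For $|k| \leq \lambda h$, I bound $|\varphi(s-k)|$ crudely by the finite constant $\|\varphi\|_\infty$, which produces directly the first sum in the statement. For $|k| > \lambda h$, the inequality $|s - k| \geq |k| - |s| \geq (1 - 2/\pi)|k|$ combined with the faster-than-any-polynomial decay of $\varphi$ gives $|\varphi(s-k)| \ll_A (1+|k|)^{-A}$. Restricting this to $\lambda h < |k| \leq R$ produces a contribution controlled by the second sum (the missing terms with $|k| \leq \lambda h$ may be inserted harmlessly, since they are already dominated by the first sum). For $|k| > \max(\lambda h, R)$, I replace $|f(x_0 + k\pi/(2\lambda))|$ by $\sup_\R |f|$ and apply the tail estimate $\sum_{|k| > R}(1+|k|)^{-A} \ll_A R^{-(A-1)}$, which consumes exactly the hypothesis $A > 1$ and yields the final term $\sup_\R |f|/(1 + R^{A-1})$.

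The argument is thus essentially a direct application of the sampling identity followed by a three-part tail split. The only point requiring attention is the lower bound $|s - k| \gtrsim |k|$ in the decay regime, which rests on the strict inequality $2/\pi < 1$ (ensuring $|s| < \lambda h \leq |k|$); and edge cases with small $R$ dissolve because $1/(1+R^{A-1})$ is then bounded below by a positive constant, making the crude bound automatically absorbed. I do not foresee any serious obstacle beyond careful bookkeeping of the $A$-dependent constants produced by the rapid decay of $\varphi$.
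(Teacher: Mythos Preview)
Your proposal is correct and follows essentially the same approach as the paper's proof. The paper organizes the argument slightly differently---it first reduces to the case $R \to \infty$ (giving $\sup_{[-h,h]}|f| \ll_A \sum_{|k|\le\lambda h}|f(k\pi/2\lambda)| + \sum_{k\in\Z}|f(k\pi/2\lambda)|/(1+|k|^A)$) and only afterward splits off the tail $|k|>R$ with the $\sup_\R|f|$ bound---whereas you perform the three-way split directly; but the core inputs (the sampling identity applied to the translated function, the uniform bound $\|\varphi\|_\infty$ for $|k|\le\lambda h$, and the decay bound $|\varphi(s-k)|\ll_A(1+|k|)^{-A}$ obtained from $|s-k|\ge(1-2/\pi)|k|$ for $|k|>\lambda h$) are identical.
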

  \begin{proof}
  By translating $f$, we can assume $x_0 = 0$, moreover, it is sufficient to show the result corresponding to $R \rightarrow \infty$, i.e.
  $$\sup_{x \in [- h, h]} 
|f(x)|
\ll_A \left( \sum_{k \in \mathbb{Z}, |k| \leq  \lambda h }
|f( k \pi/2 \lambda)|
 + \sum_{k \in \mathbb{Z}  } \frac{|f( k \pi/2 \lambda))|}{1 + |k|^A} \right).$$
 Now, by the lemma, 
 $$\sup_{x \in [- h, h]} 
|f(x)| \leq \sum_{k \in \mathbb{Z}}
|f(k \pi/2 \lambda)| 
\, \sup_{y \in [ - (2 \lambda h /\pi)
 -k, (2 \lambda h /\pi)
 -k]} |\varphi(y)|.$$
 Now, by the assumption on $\varphi$ made in the lemma, there exists $K_A > 0$ such that 
 $\varphi(y) \leq K_A/(1+ |y|^A)$
 for all $y \in \mathbb{R}$. 
 Hence, for $|k| \leq \lambda h$, 
 $$\sup_{y \in [ - (2 \lambda h /\pi)
 -k, (2 \lambda h /\pi)
 -k]} |\varphi(y)| \leq K_A
 \leq K_A \left( 1 + \frac{1}{1+ |k|^A} \right),$$
and, for $|k| \geq \lambda h$, 
$$|k| - \frac{2 \lambda h}{\pi} 
\geq |k| - \frac{2 |k|}{\pi} 
\geq |k|/3,$$
and then 
$$\sup_{y \in [ - (2 \lambda h /\pi)
 -k, (2 \lambda h /\pi)
 -k]} |\varphi(y)| \leq 
 \sup_{|y| \geq |k|/3} 
  |\varphi(y)| \leq 
  \frac{K_A}{ 1 + (|k|/3)^A}
  \leq \frac{3^A K_A}{1 + |k|^A}.$$
 This gives the desired result. 
  \end{proof}
The next step of our proof of Theorem \ref{unconditional} is to show that locally on the critical line, the Riemann zeta function is not far from being a linear combination of complex exponentials. 
\begin{proposition}
Let $Z$ be the function from $\mathbb{R}$ to $\mathbb{R}$ given by 
$$Z(t) = \zeta(1/2 + it) e^{i \theta(t)},$$
for
$$\theta(t) = \Im \log \Gamma (1/4 + it/2) - \frac{t}{2} \log  \pi,$$
where we take the continuous version of $\Im \log \Gamma (1/4 + it/2)$  vanishing at zero.
Then for $t_0 \geq 2$, and uniformly on $t \in [t_0 - t_0^{1/4}, t_0 + t_0^{1/4}]$, we have 
$$Z(t) = 2 \sum_{k = 1}^{\lfloor \sqrt{t_0/2\pi} \rfloor} \frac{\cos 
\left( \frac{t}{2}   \log \left( \frac{t_0}{2 \pi k^2} \right)
- \frac{t_0}{2} - \frac{\pi}{8} \right)}{\sqrt{k}} + \mathcal{O}(t_0^{-1/4}).$$
\end{proposition}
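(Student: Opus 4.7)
The statement is essentially the classical Riemann--Siegel formula combined with a first-order Taylor expansion in $t$ about $t_0$. I would proceed in four steps.

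First, I would invoke the Riemann--Siegel formula (see e.g.~Titchmarsh, Chapter IV): for $t\geq 2$,
$$Z(t) \;=\; 2\sum_{k=1}^{\lfloor \sqrt{t/2\pi}\rfloor} \frac{\cos(\theta(t) - t\log k)}{\sqrt{k}} \;+\; \mathcal{O}(t^{-1/4}).$$
Since $|t-t_0|\leq t_0^{1/4}$, this already yields the desired $\mathcal{O}(t_0^{-1/4})$ error on the remainder term itself.

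Second, by Stirling's expansion of $\log\Gamma$,
$$\theta(t) \;=\; \frac{t}{2}\log\frac{t}{2\pi} - \frac{t}{2} - \frac{\pi}{8} + \mathcal{O}(1/t),$$
so for each $k\geq 1$,
$$\theta(t)-t\log k \;=\; \frac{t}{2}\log\frac{t}{2\pi k^2} - \frac{t}{2} - \frac{\pi}{8} + \mathcal{O}(1/t).$$
Third, I would Taylor-expand $\log(t/2\pi k^2) = \log(t_0/2\pi k^2) + (t-t_0)/t_0 + \mathcal{O}((t-t_0)^2/t_0^2)$, uniformly in $k$. Multiplying by $t/2$ and combining with the $-t/2$ term gives
$$\frac{t}{2}\log\frac{t}{2\pi k^2} - \frac{t}{2} \;=\; \frac{t}{2}\log\frac{t_0}{2\pi k^2} - \frac{t_0}{2} + \mathcal{O}(t_0^{-1/2})$$
uniformly for $t\in[t_0-t_0^{1/4},t_0+t_0^{1/4}]$ and $k\leq\sqrt{t_0/2\pi}$. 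Since $|\cos(x+\varepsilon)-\cos(x)|\leq|\varepsilon|$, the cumulative error from replacing the argument of the cosine is at most
$$\sum_{k=1}^{\lfloor\sqrt{t_0/2\pi}\rfloor} \frac{\mathcal{O}(t_0^{-1/2})}{\sqrt{k}} \;\ll\; t_0^{-1/2}\cdot t_0^{1/4} \;=\; t_0^{-1/4},$$
which matches the target bound.

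Finally, I would deal with the discrepancy between the upper limits $\lfloor\sqrt{t/2\pi}\rfloor$ and $\lfloor\sqrt{t_0/2\pi}\rfloor$. These differ by at most $\mathcal{O}(1)$ indices since $\sqrt{t/2\pi}-\sqrt{t_0/2\pi}=\mathcal{O}(t_0^{-1/4})$; each boundary term is of size $\mathcal{O}(k^{-1/2})=\mathcal{O}(t_0^{-1/4})$, hence their total contribution is again $\mathcal{O}(t_0^{-1/4})$ and can be absorbed.

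The steps are each standard, so the main obstacle is not conceptual but careful bookkeeping: making all estimates uniform in $t$ on the interval $[t_0-t_0^{1/4},t_0+t_0^{1/4}]$, checking that Stirling's $\mathcal{O}(1/t)$ error and the quadratic Taylor remainder both survive the summation against $k^{-1/2}$ within the $\mathcal{O}(t_0^{-1/4})$ budget, and treating the jump in the summation range without loss. No step requires the Riemann hypothesis or any delicate analysis beyond classical tools in the theory of $\zeta$.
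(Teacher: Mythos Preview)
Your proposal is correct and follows essentially the same approach as the paper's proof: invoke the Riemann--Siegel formula, insert Stirling's expansion for $\theta(t)$, Taylor-expand $\log(t/2\pi)$ about $t_0$ to replace the cosine argument (picking up an $\mathcal{O}(t_0^{-1/2})$ error per term, which sums against $k^{-1/2}$ to $\mathcal{O}(t_0^{-1/4})$), and then absorb the $\mathcal{O}(1)$ discrepancy in the summation limits using that boundary terms are of size $\mathcal{O}(t_0^{-1/4})$. The only cosmetic difference is that the paper expands $\theta(t)$ first and then subtracts $t\log k$, whereas you expand the combined quantity $\theta(t)-t\log k$; these are the same computation.
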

\begin{proof}
The Riemann-Siegel formula gives (see \cite{Tit}, p. 89):
$$Z(t) = 2 \sum_{k=1}^{\lfloor\sqrt{t/2\pi} \rfloor} \frac{\cos (\theta (t) - t \log k)}{\sqrt{k}} + \mathcal{O} (t^{-1/4})$$
where complex Stirling formula gives  the expansion: 
$$\theta(t) =  \frac{t}{2} \log (t /2 \pi) - \frac{t}{2} - \frac{\pi}{8} + \mathcal{O}(1/t).$$
Hence, 
\begin{align*}
\theta(t) & = 
\frac{t}{2} \left( \log (t_0/2\pi) 
+ \frac{t - t_0}{t_0} 
+ \mathcal{O} \left(\frac{(t-t_0)^2}{t_0^2}\right) \right)
- \frac{t_0}{2} - \frac{t-t_0}{2}
- \frac{\pi}{8} + \mathcal{O}(1/t_0)
\\ & = \frac{t}{2} \log(t_0/2\pi)
- \frac{t_0}{2} - \frac{\pi}{8} 
+ \mathcal{O} \left( \frac{1 + 
(t-t_0)^2}{t_0} \right)
\\ & = \frac{t}{2} \log(t_0/2\pi)
- \frac{t_0}{2} - \frac{\pi}{8} 
+ \mathcal{O}(t_0^{-1/2}).
\end{align*}
We deduce 
\begin{align*}
Z(t) & = 2 \sum_{k = 1}^{\lfloor \sqrt{t/2\pi} \rfloor} \frac{\cos 
\left( \frac{t}{2}   \log \left( \frac{t_0}{2 \pi k^2} \right)
- \frac{t_0}{2} - \frac{\pi}{8} \right)}{\sqrt{k}} + 
\mathcal{O} \left( t_0^{-1/2} \sum_{k=1}^{\lfloor \sqrt{t/2\pi} \rfloor} k^{-1/2} \right) + 
\mathcal{O}(t_0^{-1/4})
\\ & =  2 \sum_{k = 1}^{\lfloor \sqrt{t/2\pi} \rfloor} \frac{\cos 
\left( \frac{t}{2}   \log \left( \frac{t_0}{2 \pi k^2} \right)
- \frac{t_0}{2} - \frac{\pi}{8} \right)}{\sqrt{k}}  + 
\mathcal{O}(t_0^{-1/4})
\end{align*}
This expression differs from the expression of the proposition by a bounded number of terms, since
$$\left|\sqrt{\frac{t}{2 \pi}} - 
\sqrt{\frac{t_0}{2 \pi}}\right| 
= \frac{|t - t_0|}{\sqrt{2 \pi t} +  \sqrt{2 \pi t_0}} = \mathcal{O} (t_0^{-1/4}) = \mathcal{O}(1),$$
and these terms are $\mathcal{O} (t_0^{-1/4})$. 
 \end{proof}
From the two last propositions, we deduce the following: 
\begin{proposition} \label{xxxxxxxx}
Let $h >0$, $T \geq t_0 \geq 50 ( 1+ h^4)$. Then, 
$$\sup_{t \in [t_0 - h, t_0 + h]} 
\left|\zeta\left(\frac{1}{2} + i t\right) \right|^2
\ll \left( 1 + h \log T + 
\sum_{k \in \mathbb{Z}, 
|k| \leq h \log(T/2 \pi)}
\left|\zeta\left(\frac{1}{2} + i \left(t_0 + \frac{k \pi}{2 \log (T/2 \pi)} \right)  \right) \right|^2 
\right. $$ $$\left. + \sum_{k \in \mathbb{Z}, 
|k| \leq T^{1/4}} \frac{\left|\zeta\left(\frac{1}{2} + i \left(t_0 + \frac{k \pi}{2 \log (T/2 \pi)} \right)  \right) \right|^2}{1 + |k|^3}  \right).
$$
\end{proposition}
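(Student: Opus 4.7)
The plan is to couple the Riemann--Siegel expansion from the preceding proposition with Proposition \ref{Fourier}. Set $\lambda := \log(T/2\pi)$ and let $f$ denote the explicit trigonometric polynomial
\[
f(t) := 2\sum_{k=1}^{\lfloor \sqrt{t_0/2\pi} \rfloor} \frac{1}{\sqrt{k}} \cos\!\left( \frac{t}{2}\log\!\left(\frac{t_0}{2\pi k^2}\right) - \frac{t_0}{2} - \frac{\pi}{8} \right).
\]
Since $t_0 \geq 50(1+h^4)$ forces $t_0^{1/4} > h$, the whole interval $[t_0-h,t_0+h]$ lies inside the range $[t_0 - t_0^{1/4}, t_0 + t_0^{1/4}]$ on which the preceding proposition gives $Z(t) = f(t) + \mathcal{O}(t_0^{-1/4})$, hence $|\zeta(1/2+it)| = |f(t)| + \mathcal{O}(t_0^{-1/4})$. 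The crucial observation is that each cosine in $f$ has frequency of modulus at most $\tfrac12 \log(t_0/2\pi)$, so $f^2$ is a finite linear combination of exponentials $e^{i\mu t}$ with $|\mu| \leq \log(t_0/2\pi) \leq \lambda$. I can therefore apply Proposition \ref{Fourier} to $f^2$ with this $\lambda$, $x_0 = t_0$, $R = T^{1/4}$ and $A = 3$.

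The trivial bound $|f(t)| \leq 2\sum_{k \leq \sqrt{t_0/2\pi}} k^{-1/2} \ll t_0^{1/4} \leq T^{1/4}$ yields $\sup_{\mathbb{R}}|f|^2 \ll T^{1/2}$, so the remainder $\sup|f|^2/R^{A-1}$ in Proposition \ref{Fourier} is $\mathcal{O}(1)$. The elementary inequality $(a+b)^2 \leq 2a^2 + 2b^2$, applied to $|\zeta(1/2+it)| = |f(t)| + \mathcal{O}(t_0^{-1/4})$, produces both
\[
|\zeta(1/2+it)|^2 \leq 2|f(t)|^2 + \mathcal{O}(1) \quad \text{and} \quad |f(t)|^2 \leq 2|\zeta(1/2+it)|^2 + \mathcal{O}(1)
\]
throughout the Riemann--Siegel range. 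The first reduces the supremum on $[t_0-h,t_0+h]$ to controlling $\sup_{[t_0-h,t_0+h]}|f|^2$, which Proposition \ref{Fourier} bounds by the two sample-point sums plus $\mathcal{O}(1)$.

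It remains to pass the $|f|^2$'s at the sample points $t_0 + k\pi/(2\lambda)$ back to $|\zeta|^2$'s. In the first sum $|k| \leq h\log(T/2\pi)$, each offset is at most $h\pi/2 < t_0^{1/4}$, so every sample point lies in the Riemann--Siegel range and the second inequality replaces $|f|^2$ by $2|\zeta|^2 + \mathcal{O}(1)$; the $\mathcal{O}(h\log T)$ accumulated errors form the $h \log T$ summand of the proposition. For the second sum I split at $K := (2/\pi) \lambda\, t_0^{1/4} \asymp t_0^{1/4}\log T$: for $|k| \leq K$ the sample point is still in the Riemann--Siegel range and the convergence of $\sum(1+|k|^3)^{-1}$ absorbs the per-term $\mathcal{O}(1)$ errors; for $|k| > K$ the sample point is outside, but the crude bound $|f|^2 \ll t_0^{1/2} \leq T^{1/2}$ combined with the cubic weight gives
\[
\sum_{|k| > K} \frac{|f(t_0 + k\pi/(2\lambda))|^2}{1+|k|^3} \ll t_0^{1/2}\sum_{|k|>K} |k|^{-3} \ll \frac{t_0^{1/2}}{K^2} \ll \frac{1}{(\log T)^2} = \mathcal{O}(1).
\]
This tail estimate is the main delicate point: sample points outside the Riemann--Siegel range cannot be compared to $|\zeta|^2$ directly, and the weight $(1+|k|^3)^{-1}$ is precisely what makes their collective contribution negligible. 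Collecting everything yields the stated bound.
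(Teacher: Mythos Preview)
Your proof is correct and follows essentially the same route as the paper: approximate $|\zeta|^2$ by the square of the Riemann--Siegel main term and apply Proposition~\ref{Fourier} with $A=3$. The one difference is that the paper takes $R=t_0^{1/4}$ rather than $T^{1/4}$ (proving the stronger bound with $t_0^{1/4}$ in the second sum), which forces every sample point into the interval $[t_0-t_0^{1/4},t_0+t_0^{1/4}]$ where the Riemann--Siegel approximation applies and so sidesteps your tail-splitting step entirely.
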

\begin{proof}
We will prove the majorization 
with $t_0^{1/4}$ instead of $T^{1/4}$ in the last sum, which is stronger. With this change, and
because of the inequality satisfied by $t_0$, all the values of $t$ such that $\zeta(\frac{1}{2} + it)$ is involved in the modified inequality are in the interval $[t_0 - t_0^{1/4}, t_0 + t_0^{1/4}]$. 
The previous proposition shows that 
for these values of $t$, 
$$Z(t) = H(t) + \mathcal{O}(t_0^{-1/4})$$
where $H(t)$ is dominated by $t_0^{1/4}$, and is also a linear combination of functions of the form $e^{i \mu t}$, with $\mu \in [- \frac{1}{2}\log (t_0/2 \pi),\frac{1}{2} \log (t_0/2 \pi)]$.
We deduce 
$$\left|\zeta \left( \frac{1}{2} + i t \right) \right|^2 
= (H(t))^2 + \mathcal{O}(1),$$
where 
$(H(t))^2$ is a linear combination of $e^{i \mu t}$ where $\mu$ is in $[- \log (t_0/2 \pi), \log (t_0/2 \pi)]$, and a fortiori in $[- \log (T/2 \pi), \log (T/2\pi)]$. 
Since we have a bounded error at each term when we replace $|\zeta(\frac{1}{2} + it)|^2$ by $(H(t))^2$, it is sufficient to show an equality of the following form: 
$$\sup_{t \in [t_0 - h, t_0 + h] } 
(H(t))^2 
\ll \left( 1+ \sum_{k \in \mathbb{Z}, 
|k| \leq h \log(T/2 \pi)} \left(H\left(t_0 + \frac{k \pi}{2 \log (T/2\pi)}\right)\right)^2
\right. $$ $$ +  \left. \sum_{k \in \mathbb{Z}, |k | \leq t_0^{1/4}} 
\frac{\left(H\left(t_0 + \frac{k \pi}{2 \log (T/2\pi)}\right)\right)^2}{1+ |k|^3} \right).$$
Now, this inequality is a consequence of Proposition \ref{Fourier}, applied to $f = H^2$, $\lambda = \log (T/2 \pi)$, $A = 3$, $R = t_0^{1/4}$, since  
$$\frac{\sup_{\mathbb{R}} H^2}{1 + (t_0^{1/4})^2} = \mathcal{O} (1).$$

\end{proof}
We deduce the following bound on the maximum of $|\zeta|$ on a random interval of fixed size, which, by applying Markov's inequality, completes the proof of 
Theorem \ref{unconditional}: 
\begin{proposition}
Let $U$ be a uniform variable on $[0,1]$, and $h > 0$. Then, for all $T \geq 10$, 
$$\mathbb{E} \left[ \sup_{t \in [UT -h,UT + h]} 
\left| \zeta \left( \frac{1}{2} + it \right) \right|^2 \right]
\ll_h (\log T)^2.$$
\end{proposition}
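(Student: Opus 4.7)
The plan is to take expectations in Proposition \ref{xxxxxxxx} applied with $t_0 = UT$, then use the classical mean-value estimate
$$\int_0^T \left|\zeta\!\left(\tfrac{1}{2}+it\right)\right|^2 dt \ll T \log T,$$
together with Fubini's theorem, to bound each term.

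First I would dispose of the range $t_0 < 50(1+h^4)$ where the hypothesis of Proposition \ref{xxxxxxxx} fails. The event $\{UT < 50(1+h^4)\}$ has probability $O_h(1/T)$, and on the critical line $|\zeta(1/2+it)|$ is bounded on any compact set (being continuous except at a compact set of zeros, where the square is integrable against a finite measure, and in any case its contribution to the supremum over a shifted interval of length $2h$ is at worst $O_h(1)$ by standard convexity/Lindelöf-type bounds unconditionally). So this part contributes $O_h(1/T)$ to the expectation, which is negligible.

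On the complementary event, I apply Proposition \ref{xxxxxxxx} with $t_0 = UT$ and take expectations. The deterministic term $1 + h\log T$ is already $O_h(\log T)$. For the two sums, by Fubini it suffices to bound, for each fixed real shift $c$ with $|c| = O(T^{1/4})$,
$$\mathbb{E}\left[\left|\zeta\!\left(\tfrac{1}{2}+i(UT + c)\right)\right|^2\right] = \frac{1}{T}\int_0^T \left|\zeta\!\left(\tfrac{1}{2}+i(t+c)\right)\right|^2 dt = \frac{1}{T}\int_c^{T+c}\left|\zeta\!\left(\tfrac{1}{2}+it\right)\right|^2 dt.$$
Since $|c| \leq T^{1/4} = o(T)$, the integral on the right is, by the classical second moment estimate applied to $[0, 2T]$ (say), at most $O(T \log T)$. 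Hence $\mathbb{E}|\zeta(1/2 + i(UT+c))|^2 \ll \log T$ uniformly for the relevant range of shifts $c = k\pi/(2\log(T/2\pi))$.

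Finally I sum the contributions. The first sum in Proposition \ref{xxxxxxxx} has $O(h \log T)$ terms each contributing $O(\log T)$, giving $O_h((\log T)^2)$. For the second sum, the weights $1/(1+|k|^3)$ are summable, so it contributes $O(\log T)$. Adding the deterministic $O_h(\log T)$ term and the negligible small-$t_0$ contribution yields the claimed bound $O_h((\log T)^2)$. The only mild subtlety is making sure the second moment estimate is applied with a slightly extended integration range to absorb the shifts $c$; this is harmless since the shifts are polynomially smaller than $T$. Theorem \ref{unconditional} then follows by Markov's inequality, since $(\log T)^2 / \exp(2 g(T)) \to 0$ when we compare against $(\log\log T + g(T))$ in the square.
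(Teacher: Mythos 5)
Your proof is correct and follows essentially the same route as the paper's: apply Proposition \ref{xxxxxxxx} with $t_0 = UT$, handle the small-$t_0$ range by a bounded-contribution argument, swap the expectation with the sums via Fubini, and invoke the Hardy--Littlewood second moment bound $\int_0^T |\zeta(1/2+it)|^2\,dt \ll T\log T$. The only cosmetic difference is that the paper absorbs the small-$t_0$ case by noting that Proposition \ref{xxxxxxxx} extends with constant $\ll_h$ to all $0 \le t_0 \le T$, and then dominates all the shifted averages at once by a single integral over the interval $I(T,h)$, whereas you treat the small-$t_0$ event separately via its $O_h(1/T)$ probability and then bound each shifted expectation $\mathbb{E}|\zeta(1/2+i(UT+c))|^2 \ll \log T$ individually before summing; both bookkeepings are fine and yield the same $O_h((\log T)^2)$.
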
 
\begin{proof}
If we replace $\ll$ by  $\ll_h$ in the statement of Proposition  \ref{xxxxxxxx}, it remains true as soon as 
  $T \geq 10$ and $T \geq t_0 \geq 0$, and not only for $T \geq t_0 \geq 50 (1 + h^4)$.
 Indeed, the supremum in the left-hand side in  Proposition  \ref{xxxxxxxx}  is bounded by a quantity depending only on $h$ for $0 \leq t_0 \leq 50(1+ h^4)$. 
 Hence, we can write: 
\begin{align*} 
& \mathbb{E}\left[ \sup_{t \in [UT - h, UT + h]} 
\left|\zeta\left(\frac{1}{2} + i UT\right) \right|^2 \right]
\\ & \ll_h \left( 1 + h \log T + 
\sum_{k \in \mathbb{Z}, 
|k| \leq h \log(T/2 \pi)}
\mathbb{E} \left[\left|\zeta\left(\frac{1}{2} + i \left(UT + \frac{k \pi}{2 \log (T/2 \pi)} \right)  \right) \right|^2 \right]
\right. \\ & \left. + \sum_{k \in \mathbb{Z}, 
|k| \leq T^{1/4}} \frac{\mathbb{E} \left[\left|\zeta\left(\frac{1}{2} + i \left(UT + \frac{k \pi}{2 \log (T/2 \pi)} \right)  \right) \right|^2\right]}{1 + |k|^3}  \right).
\end{align*}
Each expectation in the right-hand side 
is the average of $|\zeta|^2$ on an interval of length $T$ of the critical line, included in  the interval
$$I (T,h) := \left[\frac{1}{2} - \frac{i \pi}{2}  \left( h +\frac{T^{1/4} }{\log (T/2 \pi)}
\right), \frac{1}{2} + iT + \frac{i \pi}{2}  \left( h +\frac{T^{1/4} }{\log (T/2 \pi)}
\right)\right].$$
We deduce 
\begin{align*}
& \mathbb{E}\left[ \sup_{t \in [UT - h, UT + h]} 
\left|\zeta\left(\frac{1}{2} + i UT\right) \right|^2 \right]
\\ & 
\ll_h \left( 1 + h \log T 
+ \frac{1 + h \log (T/2 \pi) 
+ \sum_{k \in \mathbb{Z}} \frac{1}{1 + |k|^3} } {T} \int_{I(T,h)} |\zeta(s)|^2 |ds|\right). 
\end{align*}
By a classical result of Hardy and Littlewood on the second moment of $\zeta$ (see \cite{Tit}, Theorem 7.3), 
the last integral is dominated (with an implicit constant depending on $h$) by $T \log T$, which gives the desired result. 
\end{proof}

\section{Averaging  $\log \zeta$}
\label{section:average}
It is known, from the Euler product and the series of the logarithm, that for $\Re(s) > 1$, 
$$\log \zeta(s) = \sum_{n \geq 1} \ell(n) n^{-s},$$
where $\ell(n) = 1/k$ if $n$ is a $k$-th power of a prime ($k \geq 1$ integer), and $\ell(n) = 0$ otherwise. 
If we apply this formula for $s + it$ instead of $s$, and if we average by integrating with respect to $\varphi(t) dt$, then we get a sum in the right-hand side
which involves the Fourier transform of $\varphi$. This is particularly interesting if $\widehat{\varphi}$ is compactly supported, since the sum has finitely many 
non-zero terms in this case. More precisely, we have the following: 
\begin{proposition} \label{logRe1}
Let $\varphi$ be an integrable function
from $\mathbb{R}$ to $\mathbb{R}$, such that  $\widehat{\varphi}$ is compactly supported. 
Then, for $\Re(s) > 1$, the following 
quantity: 
$$L_{\varphi} (s) = \int_{-\infty}^{\infty} 
\log \zeta(s + it) \varphi(t) dt$$
is well-defined,  and one has 
$$L_{\varphi}(s) =\sum_{n \geq 1} \ell(n) n^{-s}\widehat{\varphi}(\log n),$$
Moreover, the last formula defines an analytic 
continuation of $L_{\varphi}$ to the whole complex plane. 
\end{proposition}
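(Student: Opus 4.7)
The plan is to establish the three assertions in order, the crucial observation being that compact support of $\widehat{\varphi}$ makes the purported Dirichlet series in $n$ a \emph{finite} sum.

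First I would check that $L_{\varphi}(s)$ is well-defined for $\Re(s) > 1$. For such $s$, the Dirichlet series $\sum_n \ell(n) n^{-(s+it)}$ converges absolutely and uniformly in $t$, with
$$|\log \zeta(s+it)| \leq \sum_{n \geq 1} \ell(n) n^{-\Re(s)} < \infty,$$
so $t \mapsto \log \zeta(s+it)$ is bounded. Since $\varphi$ is integrable, the integral defining $L_{\varphi}(s)$ converges absolutely.

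Next I would substitute the Dirichlet expansion and swap the order of summation and integration. The Fubini hypothesis holds since
$$\sum_{n \geq 1} \ell(n) n^{-\Re(s)} \int_{-\infty}^{\infty} |\varphi(t)|\, dt < \infty,$$
so
$$L_{\varphi}(s) = \sum_{n \geq 1} \ell(n) n^{-s} \int_{-\infty}^{\infty} e^{-it \log n} \varphi(t)\, dt = \sum_{n \geq 1} \ell(n) n^{-s} \widehat{\varphi}(\log n),$$
by the definition of the Fourier transform used in the paper. This gives the second claim.

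Finally, for the analytic continuation, I would use the compact support hypothesis on $\widehat{\varphi}$. Choose $R > 0$ with $\mathrm{supp}\,\widehat{\varphi} \subset [-R,R]$; then $\widehat{\varphi}(\log n) = 0$ whenever $n > e^R$, so the series collapses to the finite sum
$$\sum_{1 \leq n \leq e^R} \ell(n) n^{-s} \widehat{\varphi}(\log n),$$
which is a finite linear combination of entire functions $s \mapsto n^{-s}$ and therefore entire on $\mathbb{C}$. This is the desired analytic continuation, and agrees with $L_{\varphi}(s)$ on $\Re(s) > 1$ by the computation above. There is no real obstacle: the only point that needs care is the justification of the Fubini exchange, which is immediate from the absolute convergence of the Dirichlet series in the region $\Re(s) > 1$.
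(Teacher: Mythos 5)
Your proof is correct and follows essentially the same route as the paper's: expand $\log\zeta(s+it)$ in its Dirichlet series, justify the interchange of sum and integral by absolute convergence and the integrability of $\varphi$, and then observe that compact support of $\widehat{\varphi}$ truncates the resulting series to a finite (hence entire) sum. No gaps.
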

\begin{proof}
For $\Re(s) > 1$ and $t \in \mathbb{R}$, 
$$\log \zeta(s + it) 
= -  \sum_{p \in \mathcal{P}} \log (1 - p^{-s-it}) = \sum_{p \in \mathcal{P}} \sum_{k = 1}^{\infty} \frac{ p^{-k(s+ it)} }{k}
= \sum_{n \geq 1} \ell(n) n^{-s-it},
$$
all the series being absolutely convergent and dominated by 
$\sum_{n \geq 1} n^{-\Re(s)}$, 
uniformly in $t$ if $s$ is fixed. 
Integrating in $t$, we get, using this domination and the fact that $\varphi$ is integrable: 
$$L_{\varphi}(s) = \sum_{n \geq 1} \ell(n) 
n^{-s} \int_{-\infty}^{\infty} n^{-it} \varphi(t) dt =  \sum_{n \geq 1} \ell(n) 
n^{-s} \widehat{\varphi}(\log n).$$
The last series has finitely many nonzero terms since $\widehat{\varphi}$ is compactly supported, and then it defines an entire function extending $L_{\varphi}$. 
\end{proof}
The result we have just proven gives some information on $\log \zeta$ at points whose real part is strictly larger than $1$. Of course, we are more interested
in what happens on the critical line. To extend our previous result to the critical strip, we can think about the principle of analytic continuation, but we need to be careful, since $\log \zeta$ is not holomorphic everywhere because of 
the zeros and the pole of $\zeta$. However, if we assume the Riemann hypothesis, the only problem at the right of the critical line comes from the pole of $\zeta$ at $1$. The main result of the section is the following proposition, which shows that this pole gives a well-controlled error term when the  function $\varphi$ satisfies some suitable extra assumptions:
\begin{proposition} \label{sumsprimes}
Let us  assume the Riemann hypothesis. Let $\varphi$ be a function from $\mathbb{R}$ to $\mathbb{R}$, dominated by any negative power at infinity, and whose Fourier transform is compactly supported. Then, for $\sigma \in [1/2,1)$, 
$\tau \in \mathbb{R}$, $H > 0$,
\begin{equation}
\int_{-\infty}^{\infty} 
\log \zeta(\sigma + i (\tau + tH^{-1}))
\varphi(t) dt 
= \sum_{n \geq 1} \ell(n) n^{-\sigma - i\tau} \widehat{\varphi}\left( \frac{\log n}{H} \right)  + \mathcal{O}_{\varphi}
\left(1 + \frac{e^{\mathcal{O}_{\varphi} (H)}}{1+ |\tau|} \right). \label{123456789}
\end{equation}
\end{proposition}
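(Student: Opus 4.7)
The plan is to derive \eqref{123456789} by analytic continuation from the half-plane $\Re s > 1$, where Proposition~\ref{logRe1} applies, paying explicitly for the unique crossing of the pole of $\zeta$ at $s=1$. After the substitution $u = \tau + t/H$ and the notation $\psi(u) := H\varphi(H(u-\tau))$, whose Fourier transform $\widehat\psi(\lambda) = e^{-i\lambda\tau}\widehat\varphi(\lambda/H)$ is compactly supported (say in $[-MH,MH]$ whenever $\operatorname{supp}\widehat\varphi \subset [-M,M]$), the proposition reduces to
\[ F(\sigma) \;=\; L_\psi(\sigma) + \mathcal{O}_\varphi\!\left(1 + \frac{e^{\mathcal{O}_\varphi(H)}}{1+|\tau|}\right),\qquad F(\sigma):=\int_{\mathbb{R}}\log\zeta(\sigma+iu)\psi(u)\,du, \]
with $L_\psi(\sigma) := \sum_{n\geq 1}\ell(n)n^{-\sigma}\widehat\psi(\log n)$. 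Since $\widehat\psi$ has compact support, $L_\psi$ is a finite sum and hence entire, and Proposition~\ref{logRe1} gives exactly $F(\sigma_0) = L_\psi(\sigma_0)$ for every $\sigma_0>1$.

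Fix such a $\sigma_0$. I would compare $F(\sigma)$ with $F(\sigma_0)$ by integrating the logarithmic derivative of $\zeta$ horizontally: for every $v\in\mathbb{R}\setminus\{0\}$ RH guarantees that $[\sigma+iv,\sigma_0+iv]$ avoids the pole and all zeros of $\zeta$, so $\log\zeta(\sigma_0+iv)-\log\zeta(\sigma+iv) = \int_\sigma^{\sigma_0}(\zeta'/\zeta)(u+iv)\,du$. Multiplying by $\psi(v)$, integrating in $v$, and swapping order (Fubini is justified since $(\zeta'/\zeta)(u+iv)$ has only integrable singularities --- at $(1,0)$ and, under RH, at the isolated points $(1/2,\gamma_k)$ when $\sigma=1/2$ --- and grows moderately elsewhere, while $\psi$ decays faster than any polynomial), I reduce the problem to evaluating the inner integral $\int_{\mathbb{R}}\psi(v)(\zeta'/\zeta)(u+iv)\,dv$ for each $u\in[\sigma,\sigma_0]$. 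This I would compute by rewriting it as a contour integral on $\Re w = u$ and pushing it to $\Re w = \sigma_0$: by Paley--Wiener $\psi$ extends to an entire function with $|\psi(x+iy)| \lesssim_{\varphi,N} He^{MH|y|}/(1+H|x-\tau|)^N$ for every $N$, which kills the horizontal pieces at infinity. The only residue encountered, and only when $u<1$, is at the simple pole $w=1$ of $\zeta'/\zeta$ with residue $-1$, contributing $2\pi\psi(i(u-1))$; on $\Re w=\sigma_0$ the Dirichlet series $(\zeta'/\zeta)(w)=-\sum_n\Lambda(n)n^{-w}$ applies, and a further Paley--Wiener shift of the $\psi$-contour collapses the remaining integral to $-\sum_n\Lambda(n)n^{-u}\widehat\psi(\log n) = L_\psi'(u)$ (using $\Lambda(n)/\log n = \ell(n)$ for $n\geq 2$). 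Thus
\[ \int_{\mathbb{R}}\psi(v)(\zeta'/\zeta)(u+iv)\,dv \;=\; L_\psi'(u) + 2\pi\,\mathbf{1}_{u<1}\,\psi(i(u-1)). \]

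Integrating over $u\in[\sigma,\sigma_0]$ and using $F(\sigma_0) = L_\psi(\sigma_0)$ rearranges to the clean identity $F(\sigma) = L_\psi(\sigma) - 2\pi\int_\sigma^1\psi(i(u-1))\,du$. Since $|u-1|\leq 1/2$ for $u\in[\sigma,1]$ and $\sigma\geq 1/2$, the Paley--Wiener estimate gives $|\psi(i(u-1))| \lesssim_{\varphi,N} He^{MH/2}/(1+H|\tau|)^N$; taking $N=1$ and splitting the cases $|\tau|\leq 1/H$ versus $|\tau|>1/H$ yields $\bigl|\int_\sigma^1\psi(i(u-1))\,du\bigr| \lesssim_\varphi e^{\mathcal{O}_\varphi(H)}/(1+|\tau|)$, which is absorbed into the target error. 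The point I expect to be most delicate is the joint justification of the Fubini swap and the vertical contour shift in the presence of the pole at $s=1$ (together with the critical-line zeros when $\sigma = 1/2$); the saving grace is that both the rapid horizontal decay of $\psi$ and its controlled off-axis growth $|\psi(iy)| = O(e^{MH|y|})$ flow from the single hypothesis that $\widehat\varphi$ is compactly supported, which is also precisely why the pole contribution scales as $\exp(\mathcal{O}_\varphi(H))$ in the statement.
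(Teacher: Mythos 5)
Your proposal is correct, and it arrives at exactly the same error term as the paper, namely
\[
-2\pi\int_\sigma^1 \psi(i(u-1))\,du \;=\; -2\pi\int_0^{1-\sigma} V(-\tau - i\alpha)\,d\alpha
\]
with $V(z) = H\varphi(Hz) = \psi(z+\tau)$, but the route to get there is genuinely different. The paper follows Tsang's Lemma~5 and directly shifts the vertical contour of $\int \log\zeta(\sigma+iu)V(u)\,du$ down to $\Re(s)=2$; since $\log\zeta$ is multivalued, crossing the branch cut along $(\sigma,1)$ created by the pole at $1$ (and, without RH, along the cuts attached to zeros of $\zeta$ to the right of $\sigma$, which RH rules out) produces the explicit discontinuity term. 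You instead first take $\partial_\sigma$ of both sides, which replaces the branch-cut bookkeeping for $\log\zeta$ by the residue calculus for the single-valued meromorphic function $\zeta'/\zeta$: the pole of $\zeta'/\zeta$ at $w=1$ with residue $-1$ gives the contribution $2\pi\psi(i(u-1))$, and integrating back over $u\in[\sigma,\sigma_0]$ together with $F(\sigma_0)=L_\psi(\sigma_0)$ recovers the same formula. What your version buys is that the only multivaluedness to track is at the outermost step (justifying $\log\zeta(\sigma_0+iv)-\log\zeta(\sigma+iv)=\int_\sigma^{\sigma_0}(\zeta'/\zeta)$), at the cost of an extra Fubini swap which requires noting that the simple poles of $\zeta'/\zeta$ are $2$-dimensionally integrable even though they are not $1$-dimensionally integrable along $\Re w = 1/2$ (this only matters at the single value $u=1/2$, which is a null set in the outer $u$-integral, so the argument goes through). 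For the final estimate you use one integration by parts ($N=1$), yielding $\lesssim_\varphi e^{\mathcal O_\varphi(H)}/(1+|\tau|)$, which already suffices; the paper uses two integrations by parts and notes in the remark after the proposition that $1+|\tau|$ can be upgraded to $1+\tau^2$. Your handling of the regime $H\le 1$ is also a bit slicker: by keeping the factor $H$ in the Paley--Wiener bound you can absorb everything into $e^{\mathcal O_\varphi(H)}/(1+|\tau|)$ without the additive $+1$ that the paper introduces when it splits into $H\le 1$ and $H>1$.
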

\begin{rmk}
Contrarily to the case $\Re(s) > 1$, the error term does not vanish in general, since one can check, from the discontinuity of the logarithm, that the left-hand side is not holomorphic in $\sigma + i \tau$ if we allow $\sigma$ to go below $1$. 
Of course, the integral does not need that $\log \zeta$ is well-defined at the left of the zeros or the pole. 
\end{rmk}
\begin{proof}
For all $z \in \mathbb{R}$, we have 
$$\varphi(z) = \frac{1}{2 \pi} \int_{\mathbb{R}} e^{i z \lambda} \widehat{\varphi} (\lambda) d \lambda,$$
and since $\widehat{\varphi}$ is compactly supported, this formula can be extended to all $z \in \mathbb{C}$, which gives an analytic continuation of $\varphi$. 
Let us now define an entire function $V$ as $V(z) := H \varphi(Hz)$. In Lemma 5 of Tsang \cite{Tsang86}, the 
assumption (2.10) is satisfied for any given value of $H$. Indeed, for  $\sigma - 2 \leq y \leq 0$, 
$$ V(x + iy) = \frac{H}{2 \pi}   \int_{\mathbb{R}} e^{(-y + ix) H \lambda} \widehat{\varphi} (\lambda) d \lambda
 =  \frac{H}{2 \pi}   \int_{\mathbb{R}} \frac{e^{(-y + ix) H \lambda}}{ (-y + ix)^2 H^2} \widehat{\varphi}'' (\lambda) d \lambda.$$
by integration by part: note that $\widehat{\varphi}$ is smooth since $\varphi$ is rapidly decaying at infinity. 
Since $\widehat{\varphi}'' $ is compactly supported and $y$ is uniformly bounded, we have $V(x+ iy) = \mathcal{O}_{\varphi, H} (x^{-2})$, uniformly in $y \in [\sigma-2, 0]$, and a fortiori   $V(x+ iy) = \mathcal{O}_{\varphi, H} (|x|^{-1} (\log |x|)^{-2})$: the assumption (2.10) of \cite{Tsang86}, Lemma 5 is satisfied. This lemma can then 
be applied: however, it does not directly give  uniformity of the error term with respect to $H$, so we need a little extra work to deduce Proposition \ref{sumsprimes}. Let us recall here the strategy of the proof of the lemma in \cite{Tsang86}. We have
$$\int_{-\infty}^{\infty} 
\log \zeta(\sigma + i (\tau + tH^{-1}))
\varphi(t) dt  = \int_{-\infty}^{\infty} 
\log \zeta(\sigma + i (\tau + u))
V(u) du.$$ 
We shift the path of integration by $i (\sigma -2)$. The rate of decay of $V$ at infinity implies that there is no effect 
of this shift related to the tail of the integrals. Since there is no zero of $\zeta$ at the right of $\sigma$ (we assume Riemann hypothesis), the integral only changes because of the discontinuity of $\log \zeta$ at points of the interval $(\sigma, 1)$, and this change gives a term which can be explicitly written. After shifting the path of integration, we get an 
integral involving the value of $\log \zeta$ at the line $\Re(s) = 2$. Using the series of $\log \zeta$, this integral can be written as a series of integrals indexed by the integers, which then gives the first term of the right-hand side of \eqref{123456789}, after shifting the paths of integration by  $-i (\sigma -2)$ (this second shift replaces the path of integration at its initial position). 
By  looking in more detail at the proof of \cite{Tsang86}, Lemma 5, we find that the error term in our proposition can be exactly written, for $\sigma \in [1/2,1)$, as 
$$
- 2 \pi \int_0^{1 - \sigma} V(-\tau - i \alpha) d \alpha,$$
assuming the Riemann hypothesis (if we do not make this assumption, there are extra terms involving the zeros of $\zeta$ at the right of $\sigma$). 
We know that
$$V(-\tau - i \alpha)  =  \frac{H}{2 \pi}   \int_{\mathbb{R}} e^{(\alpha - i \tau) H \lambda} \widehat{\varphi} (\lambda) d \lambda =  \frac{H}{2 \pi}   \int_{\mathbb{R}} \frac{e^{(\alpha - i \tau) H \lambda}}{ (\alpha - i \tau)^2 H^2} \widehat{\varphi}'' (\lambda) d \lambda.$$
Since $\widehat{\varphi}$ is compactly supported, the two equalities respectively give 
$$\int_0^{1/2}  |V(-\tau - i \alpha)|  d \alpha = 
\mathcal{O}_{\varphi} (e^{\mathcal{O}_{\varphi}(H)}).$$
 and 
$$\int_0^{1/2}  |V(-\tau - i \alpha)|  d \alpha = 
\mathcal{O}_{\varphi} ( H^{-1} \tau^{-2} e^{\mathcal{O}_{\varphi}(H)}).$$
Combining the two estimates, we get 
$$\int_0^{1/2}  |V(-\tau - i \alpha)|  d \alpha =\mathcal{O}_{\varphi} ( ( 1+ H \tau^2)^{-1} e^{\mathcal{O}_{\varphi}(H)})).$$
 If $H \leq 1$, we deduce that the error term is $\mathcal{O}_{\varphi}(1)$, whereas 
for $H > 1$, we get a bound  $\mathcal{O}_{\varphi} ( ( 1+ \tau^2)^{-1} e^{\mathcal{O}_{\varphi}(H)}))$. This implies 
Proposition \ref{sumsprimes} in both cases. 
\end{proof}
\begin{rmk}
As we see in the proof above, the assumption on the decay of $\widehat{\varphi}$ can be easily relaxed (it is enough to know that $\widehat{\varphi}''$ is well-defined and continuous), and the denominator $1 + |\tau|$ can be improved to $(1 + \tau^2)$ in the error term. However, these possible improvements are not particularly useful for our purpose. 
\end{rmk}
\begin{rmk}
If we do not assume the Riemann hypothesis, we get some extra error terms. It seems difficult to get good estimates of them. For example, if a zero $\rho$ is far from the critical line, for example $\rho = 3/4 + i \gamma$ (this situation is not proven to be impossible), and if $\tau = \gamma$ is the imaginary part of this zero, we get, for $\sigma = 1/2$, a term 
equal to the integral of $\alpha \mapsto V(-i \alpha)$ between $0$ and $1/4$, i.e. the integral of $y \mapsto \varphi(-i y)$ between $0$ and $H/4$. Since $\varphi$ can increase exponentially on the imaginary axis, the bound we get for the error term is  exponentially increasing with $H$, which is much too large since we will need to take $H$ close to a power of $\log T$, whereas the extreme values of $\log \zeta$ we consider have order $\log \log T$. 
However, if we prevent $\tau$ to be too close to zeros of $\zeta$ which are far from the critical line, it may be possible to get some useful information about the error terms. 
\end{rmk}

The previous proposition  is only interesting if there exist functions $\varphi$ satisfying the corresponding assumptions. Indeed: 
\begin{proposition} \label{existence}
There exists a function
$\varphi$, real, nonnegative, even, dominated by any negative power at infinity, and such that its Fourier transform is compactly supported, takes values in $[0,1]$, is even and equal to $1$ at zero (which implies that the integral of $\varphi$ is $1$). 
\end{proposition}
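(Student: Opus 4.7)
The natural approach is to prescribe the Fourier transform of $\varphi$ as a self-convolution, so that $\varphi$ itself automatically comes out as a (nonnegative) square. Concretely, I would fix any smooth, even, nonnegative function $g : \mathbb{R} \to \mathbb{R}$ with compact support in some interval $[-a,a]$ and with $\int_{\mathbb{R}} g(\mu)^2\, d\mu = 1$; such a $g$ exists by taking any smooth bump and renormalizing. Setting $\widehat{\varphi} := g * g$ then produces a function which is smooth, even, nonnegative, and supported in $[-2a,2a]$. By Cauchy--Schwarz, $0 \leq (g*g)(\lambda) \leq \|g\|_2^2 = 1$ for every $\lambda$, with equality at $\lambda = 0$ because $g$ is even, so $\widehat{\varphi}$ takes values in $[0,1]$ and equals $1$ at the origin.

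Next I would define $\varphi$ as the inverse Fourier transform of $\widehat{\varphi}$, which with the convention used in the paper reads $\varphi(x) = (2\pi)^{-1} \int_{\mathbb{R}} e^{i\lambda x}\, \widehat{\varphi}(\lambda)\, d\lambda$. The convolution theorem then yields $\varphi = 2\pi\, G^2$, where $G(x) = (2\pi)^{-1} \int_{\mathbb{R}} e^{i\lambda x} g(\lambda)\, d\lambda$. Since $g$ is real and even, $G$ is real and even, and therefore $\varphi = 2\pi G^2$ is real, even, and nonnegative. Because $g$ is smooth and compactly supported, repeated integration by parts in the definition of $G$ gives $G(x) = O_N(|x|^{-N})$ for every integer $N \geq 0$, and the same decay carries over to $\varphi$. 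Finally, the normalization $\int_{\mathbb{R}} \varphi = \widehat{\varphi}(0) = 1$ is immediate from the definition of the Fourier transform.

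There is no real obstacle to this construction: it is a standard combination of two elementary facts, namely that self-convolutions are Fourier transforms of squares (hence give nonnegative inverse Fourier transforms) and that smooth compactly supported functions have rapidly decaying inverse Fourier transforms. The only small thing worth monitoring is the Cauchy--Schwarz bound $(g*g)(\lambda) \leq \|g\|_2^2$, which makes the $L^2$-normalization $\int g^2 = 1$ the natural one to secure both $\widehat{\varphi} \leq 1$ everywhere and $\widehat{\varphi}(0) = 1$ simultaneously.
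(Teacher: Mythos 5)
Your construction is correct and is essentially identical to the paper's proof: you take a smooth, even, nonnegative, compactly supported function with $L^2$ norm $1$ (the paper's $\alpha$, your $g$), form the self-convolution to get the prescribed Fourier transform (the paper's $\psi$, your $\widehat{\varphi}$), and take the inverse Fourier transform, observing it is $2\pi$ times the square of a real-valued function to conclude nonnegativity.
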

\begin{proof}
Let $\alpha$ be a nonnegative, smooth, compactly supported, even function whose $L^2$ norm is equal to $1$. We define $\psi$ as the convolution of $\alpha$ with itself:  
$$\psi(x) 
= \int_{\mathbb{R}} 
\alpha(y) \alpha(x-y) dy.$$
It is clear that $\psi$
is smooth, compactly supported, takes values in $[0,1]$ (by Cauchy-Schwarz inequality and the fact that $\alpha$ is nonnegative), is even and equal to $1$ at zero. 
We now define $\varphi$ as the inverse Fourier transform of $\psi$. 
This function is real and even since it is the case for $\psi$, dominated by any power at infinity since $\psi$ is smooth and compactly supported, and nonnegative since it is $2\pi$ times the square of the inverse Fourier transform of $\alpha$, which is real since $\alpha$ is real and even. 
\end{proof}

\section{The upper bound for the imaginary part} 
 \label{section:upperboundimaginary}
Similarly as what we have seen for $\Re \log \zeta$, the supremum of $\Im \log \zeta$ on an interval can be controlled by its values at finitely many points. 
This comes from the fact that the argument of $\zeta$ on the critical line has positive jumps of size $ \pi$ when we reach imaginary parts of zeros of $\zeta$, and decreases continuously, in a very well-controlled way, between the zeros of $\zeta$. We deduce that  $\Im \log \zeta$ cannot decrease too fast on the critical line: 
\begin{proposition} \label{argzeta}
For $2 \leq t_1 \leq t_2$ which are not imaginary parts of zeros of $\zeta$, we have: 
$$\Im \log \zeta(1/2 + i t_2) \geq \Im \log \zeta(1/2 + i t_1) - (t_2 - t_1) \log t_2 + \mathcal{O}(1).$$
\end{proposition}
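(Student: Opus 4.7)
My plan is to isolate the smooth and the discontinuous contributions to $t \mapsto \Im \log \zeta(\tfrac12 + it)$ and to show that only the smooth part can cause a decrease. Starting from Hardy's $Z$-function $Z(t) := \zeta(\tfrac12 + it) e^{i\theta(t)}$, which is real-valued, one has $\log \zeta(\tfrac12 + it) = \log Z(t) - i\theta(t)$ modulo $2\pi i \mathbb Z$; differentiating on any interval where $Z$ keeps constant sign and taking imaginary parts yields
$$\frac{d}{dt}\, \Im \log \zeta\!\left(\tfrac12 + it\right) = -\theta'(t)$$
between consecutive discontinuities. (Equivalently, $\Re (\zeta'/\zeta)(\tfrac12+it) = -\theta'(t)$ by the functional equation, since $Z'(t)/Z(t)$ is real.)

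Next I would classify the discontinuities of $t \mapsto \Im \log \zeta(\tfrac12 + it)$. With the paper's branch convention (closed horizontal half-lines going left from every zero and from the pole), continuity fails only at $t = \Im \rho$ for a zero $\rho$ with $\Re \rho \ge \tfrac12$ (zeros with $\Re \rho < \tfrac12$ produce cuts entirely to the left of the critical line, and $t_1 \ge 2$ avoids the pole). Near such a $\rho$ of multiplicity $m$ we have $\log \zeta(s) = m \log(s-\rho) + \mathrm{analytic}$, so the jump is $m$ times the jump of $\arg(s-\rho)$ as $s = \tfrac12 + it$ crosses $t = \Im \rho$ from below to above. For a zero on the critical line, $s-\rho$ swings from $-i\epsilon$ to $+i\epsilon$, giving a jump of $+\pi \cdot m$; for a zero $\rho = \beta + i\gamma$ with $\beta > \tfrac12$, the path crosses the cut transversally at an interior point, so $\arg(s-\rho)$ goes from its third-quadrant limit $-\pi$ to its second-quadrant limit $+\pi$, a jump of $+2\pi \cdot m$. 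Crucially, in both cases the jump is strictly positive.

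Combining these two facts, the total change $\Im \log \zeta(\tfrac12 + it_2) - \Im \log \zeta(\tfrac12 + it_1)$ is $-(\theta(t_2) - \theta(t_1))$ plus a non-negative sum of jumps, hence
$$\Im \log \zeta\!\left(\tfrac12 + it_2\right) - \Im \log \zeta\!\left(\tfrac12 + it_1\right) \;\ge\; -\bigl(\theta(t_2) - \theta(t_1)\bigr).$$
Stirling gives $\theta'(s) = \tfrac12 \log(s/2\pi) + \mathcal O(1/s)$ for $s \ge 2$, so $\theta(t_2) - \theta(t_1) \le \tfrac12 (t_2-t_1)\log t_2 + \mathcal O(1 + (t_2-t_1))$; a short case split (the proposition is trivial once $t_2$ is bounded, and the linear error is absorbed by the $\tfrac12 (t_2-t_1)\log t_2$ term once $\log t_2$ is sufficiently large) upgrades this to $\theta(t_2)-\theta(t_1) \le (t_2-t_1)\log t_2 + \mathcal O(1)$, delivering the claim.

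The only mildly delicate point is the sign of the jumps: one must check carefully, using the specific branch convention of the paper, that cut crossings produced by hypothetical zeros off the critical line (where the Riemann hypothesis is not invoked in this statement) contribute $+2\pi$ rather than $-2\pi$. Everything else reduces to Stirling's formula and the standard properties of the Hardy $Z$-function.
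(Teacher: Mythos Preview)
Your argument is correct and is essentially the same idea as the paper's proof, just with the black box unpacked. The paper quotes the Riemann--von Mangoldt formula (Titchmarsh, Theorem~9.3), namely $\Im \log \zeta(1/2+it) = \pi N(t) - \theta(t) + \mathcal O(1)$, and then uses the monotonicity $N(t_1)\le N(t_2)$; you instead rederive the relevant content of that formula by computing the smooth derivative $-\theta'(t)$ between discontinuities and verifying directly that every jump (whether from a zero on the line or, unconditionally, from a hypothetical zero to its right) is non-negative. Your Stirling estimate and case split are also fine, though one can shortcut them by integrating the exact expansion $\theta(t) = \tfrac{t}{2}\log(t/2\pi) - \tfrac{t}{2} - \tfrac{\pi}{8} + \mathcal O(1/t)$ to get $\theta(t_2)-\theta(t_1) \le \tfrac12(t_2-t_1)\log t_2 + \mathcal O(1)$ directly, without the intermediate $\mathcal O(t_2-t_1)$ term.
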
 
\begin{proof}
From Theorem 9.3. of \cite{Tit}, we have for $t \geq 2$: 
$$ \Im \log \zeta(1/2 + i t) = \pi N(t) - \frac{1}{2} t \log t + \frac{t(1 + \log (2 \pi))}{2} + \mathcal{O}(1),$$
where $N(t)$ denotes the number of zeros of $\zeta$ with imaginary part in the interval $(0,t]$. Since 
$N(t_1) \leq N(t_2)$, we deduce 
\begin{align*}
& \Im \log \zeta(1/2 + i t_2) - \Im \log \zeta(1/2 + i t_1) 
\\ & \geq - \frac{1}{2} [(t_2 \log t_2 - t_2) - (t_1 \log t_1 - t_1)] + (t_2 - t_1) \frac{\log (2 \pi)}{2} + 
 \mathcal{O}(1) 
\\ & \geq - \frac{1}{2} \int_{t_1}^{t_2} \log u \, du + \mathcal{O}(1),
\end{align*}
 which proves the claim. 
\end{proof}
From this result, we deduce that the argument of $\zeta$ can be controlled by some of its averages, which then implies, from Proposition \ref{sumsprimes}, that it is also controlled by suitable finite sums indexed by primes. 
The control by the averages is  given in Proposition \ref{B9}, proven in several steps, given by  Lemmas \ref{un}, \ref{deux} and \ref{trois}. The averages are then compared with sums on primes in Proposition \ref{B1B2B3}. These sums are estimated in Lemmas \ref{P1}, \ref{P2} and \ref{P3}, which finally gives the main result of this section in Proposition \ref{upperboundimaginary}.
\begin{lemma} \label{un}
Let $\varphi$ be  a function from $\mathbb{R}$ to $\mathbb{R}_+$ with integral $1$, and decaying faster than any power at infinity.  Let $h > 0$, $\epsilon \in (0,1)$. Then, for $\tau > 3$ large enough depending only on $h$, $\epsilon$ and $\varphi$, and 
for $(\log \tau)^{1/10} \leq H \leq \log \tau$, 
\begin{align*}
&\underset{t \in [\tau - 2 h, \tau + 2 h]}{\sup} \left| \int_{-\infty}^{\infty} \Im \log \zeta(1/2 + i (t + u H^{-1})) \varphi(u) du \right| 
\\ & \geq (1-\epsilon) \underset{t \in [\tau -  h, \tau +  h]}{\sup} \left| \Im \log \zeta (1/2 + it) \right| - \epsilon \underset{t \in [\tau -2  h, \tau +  2h]}{\sup} \left| \Im \log \zeta (1/2 + it) \right|
- \mathcal{O}_{\varphi,\epsilon, h}(H^{-1} \log \tau). 
\end{align*}
\end{lemma}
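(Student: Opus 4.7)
\emph{Approach.} Let $M := \sup_{t \in [\tau-h, \tau+h]} |\Im \log \zeta(1/2+it)|$ and $M' := \sup_{t \in [\tau-2h, \tau+2h]} |\Im \log \zeta(1/2+it)|$, so $0 \leq M \leq M'$; write $A(t_0)$ for the integral whose supremum I want to bound from below. Pick $t^* \in [\tau-h, \tau+h]$ with $|\Im \log \zeta(1/2+it^*)| \geq M - 1$, and choose once and for all a radius $R = R(\varphi, \epsilon) > 0$, independent of $H$, such that $2 \int_{|u|>R} \varphi(u)\,du \leq \epsilon$; this is possible because $\varphi$ decays faster than any power. For $\tau$ large enough one has $R/H \leq R/(\log \tau)^{1/10} \leq h$, so both candidate points $t_0 := t^* \pm R/H$ lie in $[\tau - 2h, \tau + 2h]$. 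The only structural input is Proposition~\ref{argzeta}, which controls the decrease of $\Im \log \zeta$ on the critical line from left to right by $O((t_2-t_1)\log \tau + 1)$. Reading this as a lower bound to the right of a base point, or (by rearranging) as an upper bound to its left, the argument splits on the sign of $\Im \log \zeta(1/2 + it^*)$ and chooses the offset direction of $t_0$ accordingly.

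In the positive case, $\Im \log \zeta(1/2 + it^*) \geq M - 1$, I take $t_0 = t^* + R/H$. For $u \in [-R, R]$, Proposition~\ref{argzeta} applied between $t^*$ and $s = t_0 + u/H \geq t^*$ gives the pointwise bound $\Im \log \zeta(1/2 + is) \geq M - (2R/H)\log \tau - O(1)$. On the tail $|u| > R$ with $s$ still in $[\tau - 2h, \tau + 2h]$ the integrand is at least $-M'$, contributing at least $-(\epsilon/2) M'$; on the far tail where $s$ leaves $[\tau - 2h, \tau + 2h]$, the crude estimate $|\Im \log \zeta(1/2 + is)| = O(\log |s|)$ combined with the super-polynomial decay of $\varphi$ gives a contribution of size $O_{\varphi, h}(\log \tau / H^2)$. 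Summing, $A(t_0) \geq M - \epsilon M' - O_{\varphi, \epsilon, h}(\log \tau / H)$, and since $M \geq (1-\epsilon) M$ this upgrades to $|A(t_0)| \geq A(t_0) \geq (1-\epsilon) M - \epsilon M' - O_{\varphi, \epsilon, h}(\log \tau / H)$. In the negative case, $\Im \log \zeta(1/2 + it^*) \leq -M + 1$, I take $t_0 = t^* - R/H$, so $s \leq t^*$ for $u \in [-R, R]$; Proposition~\ref{argzeta} applied between $s$ and $t^*$ inverts to the pointwise upper bound $\Im \log \zeta(1/2 + is) \leq -M + (2R/H)\log \tau + O(1)$. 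The mirror-image tail analysis gives $A(t_0) \leq -M + \epsilon M' + O_{\varphi, \epsilon, h}(\log \tau / H)$, so $|A(t_0)| \geq -A(t_0) \geq (1-\epsilon) M - \epsilon M' - O_{\varphi, \epsilon, h}(\log \tau / H)$.

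\emph{Main obstacle.} The central difficulty is the one-sided nature of Proposition~\ref{argzeta}: because $\Im \log \zeta$ can exhibit arbitrarily large positive jumps at zeros of $\zeta$, knowing its value at $t^*$ gives no lower bound on its values to the right when $\Im \log \zeta(1/2+it^*) < 0$, and symmetrically no upper bound on its values to the left when $\Im \log \zeta(1/2+it^*) > 0$. Flipping the sign of the offset $\pm R/H$ with the sign of $\Im \log \zeta(1/2+it^*)$ ensures that in each case the relevant side of $t^*$ is covered by Proposition~\ref{argzeta}. The accounting then forces $R$ to depend only on $\varphi$ and $\epsilon$, so that the linear deterioration $O(R \log \tau / H)$ stays inside the prescribed error $O_{\varphi, \epsilon, h}(H^{-1} \log \tau)$; the $O(1)$ constants are absorbed via $1 \leq H^{-1} \log \tau$, and the hypothesis $H \geq (\log \tau)^{1/10}$ is used only to guarantee $R/H \leq h$ for $\tau$ large, placing $t_0$ in $[\tau - 2h, \tau + 2h]$.
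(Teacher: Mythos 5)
Your proof is correct and takes essentially the same route as the paper's: it relies on the same one-sided control from Proposition~\ref{argzeta}, shifts the averaging centre in the direction that proposition handles, and decomposes the convolution into a main window (handled by Proposition~\ref{argzeta}), a near tail (bounded by the supremum on $[\tau-2h,\tau+2h]$) and a far tail (bounded by the crude $O(\log|s|)$ estimate and rapid decay of $\varphi$). The only cosmetic differences are that you truncate the main window at a fixed $R$ rather than integrating the linear error over a half-line, and that you write out the negative-sign case explicitly rather than invoking symmetry.
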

\begin{proof}
Let
$$ M_1  := \underset{t \in [\tau -  h, \tau +  h]}{\sup} \left| \Im \log \zeta (1/2 + it) \right|, $$
$$M_2  := \underset{t \in [\tau -  2h, \tau +  2h]}{\sup} \left| \Im \log \zeta (1/2 + it) \right|.$$
The quantity $M_1$ is either the  supremum of the positive part of $\Im \log \zeta$ or the  supremum of 
its negative part. We assume that it is the supremum of the positive part: the other case can be covered similarly, up to small details which are left to the reader.  
There exists $t_0 \in [\tau - h, \tau + h]$ such that 
$$ \Im \log \zeta (1/2 + it_0) \geq M_1 -1.$$
We can assume $\tau > 2(1+h)$, which implies $t_0 > 2$, and then, for $u  > 0$, by using Proposition \ref{argzeta},
$$\Im \log \zeta (1/2 + i(t_0 + u H^{-1})) \geq M_1 - uH^{-1} \log(t_0 + u H^{-1}) + \mathcal{O}(1), $$
for $u \in [-hH,0]$, 
$$ \Im \log \zeta (1/2 + i(t_0 + u H^{-1})) \geq - M_2,$$
and 
for $u  < -hH$, 
$$ | \Im \log \zeta (1/2 + i(t_0 + u H^{-1}))|  \ll \log( 2 + |t_0| + |u| H^{-1}),$$
the last estimate coming from Theorem 9.4. of \cite{Tit}.   
Let us now integrate these estimates with respect to $\varphi(u-u_0) d u$, with $u_0 > 0$ to be chosen later. 
The first estimate gives 
\begin{align*}
& \int_0^{\infty} \Im \log \zeta (1/2 + i(t_0 + u H^{-1})) \varphi(u-u_0)  du 
\\ & \geq M_1 \int_{-u_0}^{\infty} \varphi(v) dv - H^{-1} \int_{-u_0}^{\infty}  (v + u_0) [\log (1+t_0) + \log(1+|v|) + \log(1+u_0)] \varphi(v) d v + \mathcal{O}(1),
\end{align*}
since $H \geq (\log \tau)^{1/10} \geq  (\log 3)^{1/10} > 1$, then for $v \geq -u_0$,
$$ t_0 + (v+u_0) H^{-1} \leq t_0+v + u_0 \leq ( 1+ t_0)(1+|v|)(1 + u_0),$$
and the integral of $\varphi$ is $1$.
Since $\varphi$ is integrable with respect to $(1+|v|)(1 + \log(1+|v|) )\, dv$, we deduce 
\begin{align*}
& \int_0^{\infty} \Im \log \zeta (1/2 + i(t_0 + u H^{-1})) \varphi(u-u_0)  du 
\\ & \geq M_1 \int_{-u_0}^{\infty} \varphi(v) dv - \mathcal{O}_{\varphi,u_0} (1 + H^{-1} \log t_0)
\\ &  \geq M_1 \int_{-u_0}^{\infty} \varphi(v) dv - \mathcal{O}_{\varphi, u_0} (H^{-1} \log \tau),
\end{align*}
the last line coming from the fact that on the one hand,  we can assume $\tau > 2h$ and then $\log (t_0) = \log (\tau) + \mathcal{O}(1)$ for all $t \in [\tau - h, \tau + h]$, and on the other hand, $H^{-1} \log \tau \geq 1$ by assumption on $H$. 
The second estimate of $\Im \log \zeta$ gives$$ \int_{-hH}^{0} \Im \log \zeta (1/2 + i(t_0 + u H^{-1})) \varphi(u-u_0)  du 
 \geq - M_2 \int_{-\infty}^{-u_0} \varphi(v) dv.$$
Finally, the last estimate gives
\begin{align*}
& \int_{-\infty}^{-hH} \Im \log \zeta (1/2 + i(t_0 + u H^{-1})) \varphi(u-u_0)  du 
\\ & \geq -\mathcal{O} \left( \int_{-\infty}^{-hH - u_0}[ \log( 2 + t_0) + \log (2 + u_0) + \log (2+|v|) ]  \varphi(v) dv \right) 
\\ & \geq -\mathcal{O}_{\varphi,u_0,h,A} \left( H^{-A} \log t_0 \right) =  -\mathcal{O}_{\varphi,u_0,h,A} \left( H^{-A} \log \tau \right),
\end{align*}
for any $A  > 0$, since $\varphi$ is rapidly decaying at $-\infty$  by assumption. 
Since $H \geq (\log \tau)^{1/10}$, we obtain, by taking $ A = 10$, a lower bound $-\mathcal{O}_{\varphi,u_0,h}(1)$. 
Adding the three integrals on the intervals $(-\infty,-hH]$, $[-hH,0]$ and $[0, \infty)$, and translating the interval of integration, we deduce 
\begin{align*}
& \int_{-\infty}^{\infty} \Im \log \zeta (1/2 + i(t_0 + u_0 H^{-1} +  u H^{-1})) \varphi(u)  du 
\\ & \geq  M_1 \int_{-u_0}^{\infty} \varphi(v) dv  - M_2 \int_{-\infty}^{-u_0} \varphi(v) dv - \mathcal{O}_{\varphi,u_0,h}(H^{-1} \log \tau).
\end{align*}
We can now choose  $u_0$ depending only on $\varphi$ and $\epsilon$, sufficiently large in order to have
$$\int_{-\infty}^{-u_0} \varphi(v) dv \leq \epsilon.$$
Then, by taking $t_1 = t_0 + u_0 H^{-1} = t_0 + \mathcal{O}_{\varphi,\epsilon} (H^{-1})$, we get 
\begin{align*}
& \int_{-\infty}^{\infty} \Im \log \zeta (1/2 + i(t_1 + u H^{-1})) \varphi(u)  du 
\\ & \geq  M_1 (1-\epsilon)  - M_2 \epsilon- \mathcal{O}_{\varphi,\epsilon,h}(H^{-1} \log \tau).
\end{align*}
If $\tau$ is large enough depending on $h$, $\epsilon$ and $\varphi$, then $H \geq (\log \tau)^{1/10}$ can be assumed to be sufficiently large in order to 
have 
$$t_1 - t_0 =  \mathcal{O}_{\varphi,\epsilon} (H^{-1}) \leq h,$$
and then $t_1 \in [\tau - 2h, \tau + 2h]$. 

This completes the proof of the lemma.

\end{proof}
We deduce the following probabilistic result: 
\begin{lemma} \label{deux}
Let $\varphi$ be as in the previous lemma, let $\epsilon \in (0,1)$, $A \geq 1$, $h \geq 0$. For $T > 100$, let $H := (\log T)(\log \log \log T)^{1/2} (\log \log T)^{-1}$. Then, for $U$ random, uniform on $[0,1]$, we have 
\begin{align*}
& \mathbb{P} \left[  \underset{t \in [UT -  h, UT +  h]}{\sup} \left| \Im \log \zeta (1/2 + it) \right|  \geq A (1+\epsilon) \log \log T \right]
\\ & \leq  \mathbb{P} \left[  \underset{t \in [UT - 2 h, UT+ 2 h]}{\sup} \left| \int_{-\infty}^{\infty} \Im \log \zeta(1/2 + i (t + u H^{-1})) \varphi(u) du \right| \geq A \log \log T   \right]
\\ & + 2 \,  \mathbb{P} \left[  \underset{t \in [UT -  h, UT +  h]}{\sup} \left| \Im \log \zeta (1/2 + it) \right|  \geq 10 A (1+\epsilon) \log \log T \right]
+ \mathcal{O}_{\varphi,h,\epsilon} ( T^{-0.99}).
\end{align*}
\end{lemma}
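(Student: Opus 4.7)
The plan is to apply Lemma \ref{un} with an auxiliary parameter $\epsilon'$ in place of $\epsilon$, combine it with a union bound, and convert sups over shifted intervals into sups over $[UT-h, UT+h]$ via translation of the uniform law on $[0,1]$. Fix $\epsilon' := \epsilon/(20(1+\epsilon))$, so that $(1+\epsilon)(1-11\epsilon') > 1$, and write $L := \log \log T$, $M_1 := \sup_{t \in [UT-h,UT+h]}|\Im \log \zeta(1/2+it)|$, $M_2 := \sup_{t \in [UT-2h,UT+2h]}|\Im \log \zeta(1/2+it)|$, and
$$S := \sup_{t \in [UT-2h,UT+2h]}\left| \int_{-\infty}^{\infty} \Im \log \zeta(1/2+i(t+uH^{-1}))\varphi(u)\,du \right|.$$

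First I would check that the hypotheses of Lemma \ref{un} apply with $\tau = UT$ and $\epsilon = \epsilon'$ outside an event of probability $\mathcal{O}_{\varphi,\epsilon,h}(T^{-0.99})$. The requirement $\tau \geq C(\varphi,\epsilon',h)$ fails on a set of probability $\mathcal{O}_{\varphi,\epsilon,h}(1/T)$; the bound $(\log\tau)^{1/10} \leq H$ holds for $T$ large since $(\log T)^{1/10}/H \to 0$; and $H \leq \log\tau$ fails only on $\{U < e^H/T\}$, whose measure equals
$$\frac{e^H}{T} = T^{(\log\log\log T)^{1/2}/(\log\log T) - 1} \leq T^{-0.99}$$
for $T$ large. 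Outside this exceptional event, Lemma \ref{un} yields
$$S \geq (1-\epsilon')M_1 - \epsilon'M_2 - \mathcal{O}_{\varphi,\epsilon,h}(H^{-1}\log T),$$
in which the error is $o(L)$ because $H^{-1}\log T = L/(\log\log\log T)^{1/2}$.

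Next I would split $\{M_1 \geq A(1+\epsilon)L\}$ according to the size of $M_2$. On the subevent $\{M_2 \leq 10A(1+\epsilon)L\}$, the previous inequality gives $S \geq A(1+\epsilon)(1-11\epsilon')L - o(L) \geq AL$ for $T$ large, feeding the first probability on the right-hand side of the claim. On the subevent $\{M_2 > 10A(1+\epsilon)L\}$, decomposing $[UT-2h, UT+2h] = [UT-2h, UT] \cup [UT, UT+2h]$, at least one of the sups of $|\Im\log\zeta|$ over these half-intervals exceeds $10A(1+\epsilon)L$. Each half-interval has the form $[\tilde U T - h, \tilde U T + h]$ with $\tilde U = U \pm h/T$, and since the law of $\tilde U$ differs from that of $U$ in total variation by $2h/T = \mathcal{O}_h(T^{-0.99})$, each such event has probability at most $\mathbb{P}[M_1 > 10A(1+\epsilon)L] + \mathcal{O}_h(T^{-0.99})$; this produces the factor $2$ in the statement. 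Combining the two subcases by a union bound proves the claim.

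The argument is essentially bookkeeping once Lemma \ref{un} is in hand and $\epsilon'$ is fixed, so no single step is conceptually difficult. The only point demanding some care is that the specific choice $H = (\log T)(\log\log\log T)^{1/2}/(\log\log T)$ is calibrated precisely so that both $e^H/T \leq T^{-0.99}$ and $H^{-1}\log T = o(L)$ hold simultaneously; a much smaller $H$ would break the first estimate, and a much larger one the second.
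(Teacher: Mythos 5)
Your proof is correct and follows essentially the same route as the paper's: apply Lemma \ref{un} with a suitably small auxiliary $\epsilon'$ after discarding an exceptional $T^{-0.99}$-event where the hypotheses fail, split on whether $M_2$ is small or large, and for the large-$M_2$ case decompose $[UT-2h,UT+2h]$ into two translated copies of $[UT-h,UT+h]$ and use that shifting $U$ by $\pm h/T$ changes the law negligibly. The paper realizes the final reduction by a coupling (defining $\tau_1,\tau_2$ agreeing with $\tau$ up to probability $h/T$) and a change of variable in $\epsilon$ at the end rather than picking $\epsilon'$ up front, but these are cosmetic differences.
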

\begin{rmk}
The implied constant does not depend on $A$. 
\end{rmk}
\begin{proof}
Except on an event of probability $\mathcal{O}_{\varphi, h, \epsilon} (T^{-1})$, $\tau = UT$ is large enough in order to apply the previous lemma. Moreover, by changing the implicit constant  
in $\mathcal{O}_{\varphi,h,\epsilon} ( T^{-0.99})$, we can assume that $T$ is sufficiently large, in order to have $(\log \tau)^{1/10} \leq (\log T)^{1/10} \leq H$. We also have, for $T$ large enough: 
\begin{align*}
 \mathbb{P} [ H > \log \tau] & = \mathbb{P} \left[ \log (UT) < (\log T)(\log \log \log T)^{1/2} (\log \log T)^{-1} \right]
\\ & \leq \mathbb{P} \left[ \log (UT) < \frac{1}{100} \log T \right] = \mathbb{P} [ UT < T^{1/100} ] = T^{-0.99}.
\end{align*}
Applying the previous lemma (with a different value of $\epsilon$), we deduce that, outside an event of probability $\mathcal{O}_{\varphi,h,\epsilon} ( T^{-0.99})$, 
\begin{align*}
&\underset{t \in [\tau - 2 h, \tau + 2 h]}{\sup} \left| \int_{-\infty}^{\infty} \Im \log \zeta(1/2 + i (t + u H^{-1})) \varphi(u) du \right| 
\\ & \geq (1-\epsilon) \underset{t \in [\tau -  h, \tau +  h]}{\sup} \left| \Im \log \zeta (1/2 + it) \right| - \epsilon \underset{t \in [\tau -2  h, \tau +  2h]}{\sup} \left| \Im \log \zeta (1/2 + it) \right| - \mathcal{O}_{\varphi,\epsilon, h}(H^{-1} \log T),
\end{align*}
for $\tau = UT$.
Let us assume that $\epsilon < 1/12$, that the second supremum is as least $A ( \log \log T) /(1 - 12 \epsilon) $, and that the last supremum is at most $10 A (\log \log T) /(1 - 12 \epsilon) $. In this case, outside an event of probability $\mathcal{O}_{\varphi,h,\epsilon} ( T^{-0.99})$, 
we get 
\begin{align*}
& \underset{t \in [\tau - 2 h, \tau + 2 h]}{\sup} \left| \int_{-\infty}^{\infty} \Im \log \zeta(1/2 + i (t + u H^{-1})) \varphi(u) du \right| 
\\ & \geq \frac{A(\log \log T)( 1 - 11\epsilon)}{1 - 12\epsilon} -  \mathcal{O}_{\varphi,\epsilon, h}((\log \log \log T)^{-1/2} \log \log T)
\\ & \geq A \log \log T,
\end{align*} 
the last equality being true as soon as $T$ is large enough depending on $\varphi,\epsilon, h$ (not on $A$ since  $A \geq 1$). This can always be assumed by changing the implicit constant in  $\mathcal{O}_{\varphi,h,\epsilon} ( T^{-0.99})$.
We then get: 
\begin{align*}
& \mathbb{P} \left[  \underset{t \in [\tau -  h, \tau +  h]}{\sup} \left| \Im \log \zeta (1/2 + it) \right|  \geq \frac{A \log \log T}{1-12 \epsilon} \right]
\\ & \leq  \mathbb{P} \left[  \underset{t \in [\tau - 2 h, \tau+ 2 h]}{\sup} \left| \int_{-\infty}^{\infty} \Im \log \zeta(1/2 + i (t + u H^{-1})) \varphi(u) du \right| \geq A \log \log T   \right]
\\ & + \,  \mathbb{P} \left[  \underset{t \in [\tau - 2 h, \tau + 2  h]}{\sup} \left| \Im \log \zeta (1/2 + it) \right|  \geq \frac{10 A \log \log T}{1-12 \epsilon}\right]
+ \mathcal{O}_{\varphi,h,\epsilon} ( T^{-0.99}).
\\ & \leq  \mathbb{P} \left[  \underset{t \in [\tau - 2 h, \tau+ 2 h]}{\sup} \left| \int_{-\infty}^{\infty} \Im \log \zeta(1/2 + i (t + u H^{-1})) \varphi(u) du \right| \geq A \log \log T   \right]
\\ & + \,  \mathbb{P} \left[  \underset{t \in [\tau' - h, \tau' + h]}{\sup} \left| \Im \log \zeta (1/2 + it) \right|  \geq \frac{10 A \log \log T}{1-12 \epsilon}\right]
\\ & + \,  \mathbb{P} \left[  \underset{t \in [\tau'' - h, \tau'' + h]}{\sup} \left| \Im \log \zeta (1/2 + it) \right|  \geq \frac{10 A \log \log T}{1-12 \epsilon}\right]
+ \mathcal{O}_{\varphi,h,\epsilon} ( T^{-0.99}),
\end{align*}
where $\tau' = \tau + h$ and $\tau'' = \tau - h$. Since $\tau$ is uniform on $[0,T]$, it is possible to define random variables $\tau_1$ and $\tau_2$ with the same laws as $\tau'$ and $\tau''$, and both equal to $\tau$ with probability at least $1 - (h/T)$ (for example, take $\tau_1 = \tau + T \mathds{1}_{\tau < h}$ and $\tau_2 = \tau -   T \mathds{1}_{\tau> T-h})$. 
We deduce that we can replace $\tau'$ and $\tau''$ by $\tau_1$ and $\tau_2$, and then both by $\tau$, in the previous estimate. We then get the claim in the lemma, with $\epsilon$ replaced by $(12 \epsilon)/(1-12 \epsilon)$. Since this quantity 
can take any value in $(0,1)$, we are done. 
\end{proof}
We then deduce  the following: 
\begin{lemma} \label{trois}
With the notation of the previous lemma: 
\begin{align*}
& \mathbb{P} \left[  \underset{t \in [UT -  h, UT +  h]}{\sup} \left| \Im \log \zeta (1/2 + it) \right|  \geq A (1+\epsilon) \log \log T \right]
\\ & \leq  \mathbb{P} \left[  \underset{t \in [UT - 2 h, UT+ 2 h]}{\sup} \left| \int_{-\infty}^{\infty} \Im \log \zeta(1/2 + i (t + u H^{-1})) \varphi(u) du \right| \geq A \log \log T   \right]
\\ & + (\log T) \mathbb{P} \left[  \underset{t \in [UT - 2 h, UT+ 2 h]}{\sup} \left| \int_{-\infty}^{\infty} \Im \log \zeta(1/2 + i (t + u H^{-1})) \varphi(u) du \right| \geq 10 \log \log T   \right]
\\ &+ \mathcal{O}_{\varphi,h,\epsilon} ( T^{-0.98}).
\end{align*}
\end{lemma}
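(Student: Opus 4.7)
The plan is to iterate Lemma~\ref{deux} and to exploit the fact, emphasized in the remark right after it, that the implied constant in the error term does not depend on $A$. Write $L := \log \log T$ and abbreviate
$$ M := \sup_{t \in [UT - h, UT + h]} \left|\Im \log \zeta(1/2 + it)\right|, \quad I(B) := \mathbb{P}\!\left[ \sup_{t \in [UT - 2h, UT + 2h]} \left| \int_{-\infty}^{\infty} \Im \log \zeta(1/2 + i(t + uH^{-1})) \varphi(u)\, du \right| \geq BL \right].$$
With this notation, Lemma~\ref{deux} reads, for every $A \geq 1$,
$$\mathbb{P}[M \geq A(1+\epsilon)L] \;\leq\; I(A) \;+\; 2\, \mathbb{P}[M \geq 10A(1+\epsilon)L] \;+\; \mathcal{O}_{\varphi,h,\epsilon}(T^{-0.99}),$$
with implicit constant independent of $A$.

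First I would apply this inequality recursively, substituting $A \leftarrow 10^k A$ at the $k$-th step. A straightforward induction then yields, for every integer $N \geq 1$,
$$ \mathbb{P}[M \geq A(1+\epsilon)L] \;\leq\; \sum_{k=0}^{N-1} 2^k\, I(10^k A) \;+\; 2^N\, \mathbb{P}[M \geq 10^N A(1+\epsilon)L] \;+\; (2^N - 1)\, \mathcal{O}_{\varphi,h,\epsilon}(T^{-0.99}).$$
Since $A \geq 1$ forces $10^k A \geq 10$ for every $k \geq 1$, monotonicity of $B \mapsto I(B)$ gives $I(10^k A) \leq I(10)$, so the right-hand side is bounded by
$$ I(A) \;+\; (2^N - 2)\, I(10) \;+\; 2^N\, \mathbb{P}[M \geq 10^N A(1+\epsilon)L] \;+\; 2^N\, \mathcal{O}_{\varphi,h,\epsilon}(T^{-0.99}).$$

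To close the iteration I would choose $N$ large enough that the residual supremum tail vanishes outright. The unconditional bound $|\Im \log \zeta(1/2 + it)| \ll \log(2 + |t|)$, coming from Theorem~9.4 of \cite{Tit}, implies that $M$ is dominated by a constant multiple of $\log T$ on the event $\{UT \leq T\}$; hence taking $N := \lceil \log_{10}(C \log T / L) \rceil$ for a suitable absolute constant $C$ makes the threshold $10^N A(1+\epsilon)L$ exceed the maximal possible value of $M$ (recall $A \geq 1$). Consequently $\mathbb{P}[M \geq 10^N A(1+\epsilon)L] = 0$ once $T$ is large in terms of $\varphi, h, \epsilon$, a restriction that can always be absorbed into the implicit constant of the final error. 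For this choice of $N$ one has $2^N \ll (\log T)^{\log_{10} 2} \leq \log T$, so that $(2^N - 2)\, I(10) \leq (\log T)\, I(10)$ and the accumulated error is $2^N \cdot \mathcal{O}_{\varphi,h,\epsilon}(T^{-0.99}) = \mathcal{O}_{\varphi,h,\epsilon}(T^{-0.98})$, combining to exactly the claimed bound.

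The only delicate point is the uniformity of the iteration: the cascade is meaningful precisely because the error $\mathcal{O}_{\varphi,h,\epsilon}(T^{-0.99})$ in Lemma~\ref{deux} is independent of $A$, so telescoping the $N \sim \log \log T$ copies costs only a polylogarithmic factor, easily swallowed by the $T^{-0.98}$ error. Beyond this the argument is bookkeeping, and I do not anticipate any significant obstacle.
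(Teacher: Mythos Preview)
Your proposal is correct and follows essentially the same route as the paper: iterate Lemma~\ref{deux}, use the $A$-independence of the error to telescope, invoke Theorem~9.4 of \cite{Tit} to kill the residual tail at a depth of order $\log\log T$, and observe that $2^N \leq \log T$ so the accumulated error and the $I(10)$ prefactor land exactly where claimed. The only cosmetic differences are your slightly tighter choice of $N$ (the paper takes $k_0 = 1 + \lfloor \log\log T\rfloor$) and that the constant $C$ should depend on $h$ rather than be absolute, since the Titchmarsh bound is applied on $[UT-h, UT+h]$.
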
 
\begin{proof}
We can iterate the result of the previous lemma, which gives, for $k_0 \geq 1$ integer: 
\begin{align*}
& \mathbb{P} \left[  \underset{t \in [UT -  h, UT +  h]}{\sup} \left| \Im \log \zeta (1/2 + it) \right|  \geq A (1+\epsilon) \log \log T \right]
\\ & \leq  \sum_{k = 0}^{k_0 - 1} 
2^k \mathbb{P} \left[  \underset{t \in [UT - 2 h, UT+ 2 h]}{\sup} \left| \int_{-\infty}^{\infty} \Im \log \zeta(1/2 + i (t + u H^{-1})) \varphi(u) du \right| \geq 10^k A \log \log T   \right]
\\ &+  2^{k_0} \,  \mathbb{P} \left[  \underset{t \in [UT -  h, UT +  h]}{\sup} \left| \Im \log \zeta (1/2 + it) \right|  \geq 10^{k_0} A (1+\epsilon) \log \log T \right]
+ \mathcal{O}_{\varphi,h,\epsilon} \left( 2^{k_0} T^{-0.99} \right). 
\end{align*}
We now take $k_0 = 1 + \lfloor \log \log T \rfloor $, which implies that the last probability is zero for $T$ large enough depending on $h$, since, by  Theorem 9.4. of Titchmarsh \cite{Tit}, 
$$ \underset{t \in [UT -  h,U T +  h]}{\sup} \left| \Im \log \zeta (1/2 + it) \right| \leq    \underset{t \in [-  h, T +  h]}{\sup} \left| \Im \log \zeta (1/2 + it) \right|  \ll_h \log T,$$
whereas 
$$ 10^{k_0} A (1+\epsilon) \log \log T \geq 10^{k_0} \geq (\log T)^{\log 10}.$$
For $T$ large enough, $2^{k_0} T^{-0.99} \leq T^{-0.98}$. 
In the sum in $k$, each term corresponding to $k  > 0$ is at most 
$$ 2^{k}\mathbb{P} \left[  \underset{t \in [UT - 2 h, UT+ 2 h]}{\sup} \left| \int_{-\infty}^{\infty} \Im \log \zeta(1/2 + i (t + u H^{-1})) \varphi(u) du \right| \geq 10 \log \log T   \right],$$
since $A \geq 1$. Hence, the sum of the terms for $k = 1$ to $k_0 - 1$ is at most 
$$ (2^{k_0}-2)\mathbb{P} \left[  \underset{t \in [UT - 2 h, UT+ 2 h]}{\sup} \left| \int_{-\infty}^{\infty} \Im \log \zeta(1/2 + i (t + u H^{-1})) \varphi(u) du \right| \geq 10 \log \log T   \right],$$
where $$2^{k_0}-2 \leq 2^{1 + \log \log T} = 2 (\log T)^{\log 2} \leq \log T$$
for $T$ large enough. 
Adding the term corresponding to $k = 0$ gives the desired result. 
\end{proof}
The properties of the imaginary part of $\zeta$ on the critical line imply that the previous lemma can be rewritten as follows: 
\begin{proposition} \label{B9}
Let $\varphi$ be as in the previous lemma, let  $A > B \geq 1$, $h > 0$. For $T > 100$, let $H := (\log T)(\log \log \log T)^{1/2} (\log \log T)^{-1}$.  Then, for $U$ random, uniform on $[0,1]$,
\begin{align*}
& \mathbb{P} \left[  \underset{t \in [UT -  h, UT +  h]}{\sup} \left| \Im \log \zeta (1/2 + it) \right|  \geq A\log \log T \right]
\\ & \leq ( \log T) \left(\sup_{d \in [-2h,2h]} \mathbb{P} \left[  \left| \int_{-\infty}^{\infty} \Im \log \zeta(1/2 + i (UT + d + u H^{-1})) \varphi(u) du \right| \geq B \log \log T   \right] \right)
\\ & + (\log T)^2 \left(\sup_{d \in [-2h,2h]} \mathbb{P} \left[  \left| \int_{-\infty}^{\infty} \Im \log \zeta(1/2 + i (UT + d + u H^{-1})) \varphi(u) du \right| \geq 9 \log \log T   \right] \right)
\\ &+ \mathcal{O}_{\varphi,h,A,B} ( T^{-0.98}).
\end{align*}
\end{proposition}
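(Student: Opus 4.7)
The plan is to derive Proposition~\ref{B9} from Lemma~\ref{trois} by discretizing the supremum over $t$. Throughout, write $LL := \log\log T$ and $F(t) := \int \Im\log\zeta(1/2+i(t+uH^{-1}))\,\varphi(u)\,du$.

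First I would apply Lemma~\ref{trois} with $A_0 := (A+B)/2$ and $\epsilon_0 := A/A_0 - 1 = (A-B)/(A+B)$ playing the roles of $A$ and $\epsilon$; this is legitimate since $A_0 \geq 1$ and $\epsilon_0 \in (0,1)$, and it arranges $A_0(1+\epsilon_0) = A$, yielding
\begin{equation*}
\mathbb{P}\!\left[\sup_{[UT-h,UT+h]}|\Im\log\zeta(1/2+it)| \geq A\cdot LL\right] \leq Q_1 + (\log T)\,Q_2 + \mathcal{O}_{\varphi,h,A,B}(T^{-0.98}),
\end{equation*}
with $Q_1 := \mathbb{P}[\sup_{[UT-2h,UT+2h]}|F(t)| \geq A_0\cdot LL]$ and $Q_2 := \mathbb{P}[\sup_{[UT-2h,UT+2h]}|F(t)| \geq 10\cdot LL]$.

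It then suffices to establish the discretization bound: for every $M \geq 1$ and $c \in (0, M-1]$,
\begin{equation*}
\mathbb{P}\!\left[\sup_{[UT-2h,UT+2h]}|F(t)| \geq M\cdot LL\right] \leq (\log T)\sup_{d\in[-2h,2h]}\mathbb{P}\!\left[|F(UT+d)| \geq (M-c)\cdot LL\right] + \mathcal{O}_{\varphi,h,c,M}(T^{-0.99}).
\end{equation*}
Applying this with $(M,c)=(A_0,(A-B)/2)$ (so $M-c = B$) bounds $Q_1$, and with $(M,c)=(10,1)$ bounds $Q_2$; substituting back produces the two terms of Proposition~\ref{B9}, the second carrying its additional factor $\log T$ inherited from Lemma~\ref{trois}. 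To prove the discretization bound, I would partition $[-2h, 2h]$ into $N = \lceil\log T\rceil$ equal subintervals with left endpoints $d_j$ and length $\delta = 4h/N$, set $\mathrm{osc}_j := \sup_{r \in [d_j, d_{j+1}]}|F(UT+r)-F(UT+d_j)|$, and observe that if $\sup_t|F|\geq M\cdot LL$ while $\mathrm{osc}_j \leq c\cdot LL$ for every $j$, then some $d_j$ satisfies $|F(UT+d_j)|\geq (M-c)\cdot LL$. A union bound over the $N \leq \log T$ subintervals then yields the target estimate up to $\sum_j \mathbb{P}[\mathrm{osc}_j > c\cdot LL]$.

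The main obstacle is to show this residual probability is $\mathcal{O}_{\varphi,h,c}(T^{-0.99})$. Since $F$ is the convolution of $\Im\log\zeta(1/2+i\,\cdot)$ with the smooth rescaled kernel $\tilde\varphi(v):=H\varphi(Hv)$, its distributional derivative decomposes as
\begin{equation*}
F'(t) = \int \tilde\varphi(v)\,\Re(\zeta'/\zeta)(1/2+i(t+v))\,dv + \pi\sum_{\gamma} \tilde\varphi(\gamma - t),
\end{equation*}
where the sum runs over imaginary parts of nontrivial zeros of $\zeta$. Under the Riemann hypothesis, $\Re(\zeta'/\zeta)(1/2+is) = \mathcal{O}(\log T)$ uniformly for $|s| \ll T$, so the first term contributes $\mathcal{O}_\varphi(\log T)$ pointwise; the sum over zeros has mean value $\mathcal{O}(\log T)$ by Riemann--von Mangoldt but can spike up to $\sim H\log T/LL$ at points of atypical zero clustering. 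A crude Markov bound on $\mathrm{osc}_j$ based only on $\mathbb{E}\int |F'|\,ds \ll h\log T$ gives $\sum_j \mathbb{P}[\mathrm{osc}_j > c\cdot LL] \ll 1/LL$, which is not negligible; the uniform oscillation control therefore requires either higher-moment/concentration estimates on $\sum_\gamma \tilde\varphi(\gamma-t)$ (combining Selberg's estimate $|S(t)| \ll \log T/LL$ with second-moment inputs on the local distribution of zeros) or a Fourier-sampling reduction exploiting that $F$ has distributional Fourier transform supported in $[-\lambda H, \lambda H]$, where $\lambda$ is the bandwidth of $\hat\varphi$. Either way, this uniform control of $\mathrm{osc}_j$ is where the real technical work of the proof is concentrated.
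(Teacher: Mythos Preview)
Your reduction via Lemma~\ref{trois} with $A_0=(A+B)/2$ and $\epsilon_0=(A-B)/(A+B)$ is exactly what the paper does, and your discretization strategy (partition $[-2h,2h]$ into $\lceil\log T\rceil$ equal pieces and union-bound) is also the same. The divergence---and the genuine gap---is in how you control the oscillation $\mathrm{osc}_j$.

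You attempt a \emph{two-sided} probabilistic control of $\mathrm{osc}_j$ via the derivative $F'$, and you correctly observe that this does not close: the crude Markov bound gives only $1/LL$, and the sharper concentration/Fourier-sampling ideas you gesture at are neither carried out nor clearly sufficient. There is also a technical slip: $\Re(\zeta'/\zeta)(1/2+is)$ is \emph{not} $\mathcal{O}(\log T)$ uniformly---it blows up at every zero on the line---so your decomposition of $F'$ is not usable as written. (Note too that Proposition~\ref{B9} is unconditional; invoking RH here is already a warning sign.)

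The paper bypasses all of this with a \emph{deterministic one-sided} estimate. From Proposition~\ref{argzeta} one has, for $t_1<t_2$ in $[UT-3h,UT+3h]$ and $u\in[-hH,hH]$,
\[
\Im\log\zeta\bigl(\tfrac12+i(t_2+uH^{-1})\bigr)\ \ge\ \Im\log\zeta\bigl(\tfrac12+i(t_1+uH^{-1})\bigr)-(t_2-t_1)\log T+\mathcal{O}(1),
\]
and for $|u|>hH$ the tail is handled by the trivial bound $|\Im\log\zeta|\ll\log(T+|u|)$ together with the rapid decay of $\varphi$. Integrating against $\varphi(u)\,du$ yields
\[
F(t_2)\ \ge\ F(t_1)-(t_2-t_1)\log T+\mathcal{O}_h(1).
\]
Since the spacing $\delta=4h/(\lfloor\log T\rfloor-1)$ satisfies $\delta\log T=\mathcal{O}_h(1)$, this one-sided inequality already pins both the maximum and the minimum of $F$ on $[UT-2h,UT+2h]$ to within $\mathcal{O}_h(1)$ of the extreme values on the discrete grid: if the sup is attained at $t^*$, the grid point $d_{j+1}$ immediately to its right satisfies $F(d_{j+1})\ge F(t^*)-\mathcal{O}_h(1)$; if the inf is attained at $t^*$, the grid point $d_j$ immediately to its left satisfies $F(d_j)\le F(t^*)+\mathcal{O}_h(1)$. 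Thus $\mathrm{osc}_j\le\mathcal{O}_h(1)$ deterministically (on the good event $UT\ge 3h$ of probability $1-\mathcal{O}_h(T^{-1})$), and your ``residual probability'' is simply absorbed into the error term for $T$ large. No moment or concentration estimate on zeros is needed.

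In short: the missing idea is that $\Im\log\zeta$ on the critical line can only decrease at rate $\log T$ (Proposition~\ref{argzeta}), and this one-sided monotonicity survives the averaging and makes the discretization error $\mathcal{O}_h(1)$ deterministically.
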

\begin{proof}
Let $A' := (A+B)/2$. Applying the previous lemma to $A'$ instead of $A$,  and $\epsilon \leq  (A/A')-1$, depending only on $A$ and $B$, we get 
\begin{align*}
& \mathbb{P} \left[  \underset{t \in [UT -  h, UT +  h]}{\sup} \left| \Im \log \zeta (1/2 + it) \right|  \geq A \log \log T \right]
\\ & \leq  \mathbb{P} \left[  \underset{t \in [UT - 2 h, UT+ 2 h]}{\sup} \left| \int_{-\infty}^{\infty} \Im \log \zeta(1/2 + i (t + u H^{-1})) \varphi(u) du \right| \geq A' \log \log T   \right]
\\ & + (\log T) \mathbb{P} \left[  \underset{t \in [UT - 2 h, UT+ 2 h]}{\sup} \left| \int_{-\infty}^{\infty} \Im \log \zeta(1/2 + i (t + u H^{-1})) \varphi(u) du \right| \geq 10 \log \log T   \right]
\\ &+ \mathcal{O}_{\varphi,h,A,B} ( T^{-0.98}).
\end{align*}
Discarding an event of probability $\mathcal{O}_h(T^{-1})$, which can be absorbed in the error term, we can assume that  $[UT -3 h, UT+ 3 h]$ is included in $[2, T]$. Hence, for $UT - 2h \leq t_1 < t_2 \leq UT + 2h$ and $u \in [-hH,hH]$, we 
get from Proposition \ref{argzeta}, 
$$ \Im \log \zeta(1/2 + i (t_2 + u H^{-1})) \geq \Im \log \zeta(1/2 + i (t_1 + u H^{-1})) - (t_2 - t_1) \log T + \mathcal{O}(1). $$
If $T$ is large enough (which implies $H \geq 1$), we also get, for  $u \notin [-hH,hH]$,
$$ \Im \log \zeta(1/2 + i (t_2 + u H^{-1})) \geq \Im \log \zeta(1/2 + i (t_1 + u H^{-1})) - \mathcal{O} (\log (T+ |u|))$$
by the classical bound of $\Im \log \zeta$ on the critical line (Theorem 9.4. of \cite{Tit}). 
Integrating with respect to $\varphi(u) du$, using the rapid decay of $\varphi$ and the fact that $H \geq (\log T)^{1/10}$, we deduce, for $T$ large enough, 
\begin{align*}
\int_{-\infty}^{\infty} \Im \log \zeta(1/2 + i (t_2 + u H^{-1})) \varphi(u) du 
 & \geq \int_{-\infty}^{\infty} \Im \log \zeta(1/2 + i (t_1 + u H^{-1})) \varphi(u) du   \\ & - (t_2 - t_1) \log T + \mathcal{O}_h(1).
\end{align*}
Hence, for $T$ large enough, the extrema of the last integral for  $t_1 \in [UT - 2h, UT + 2h]$ are controlled, up to an error $\mathcal{O}_h(1)$, by the highest and the lowest values corresponding to $t_1 = UT - 2 h + 4 h k /(\lfloor \log T \rfloor - 1)$, where 
$k \in \{0,1,\dots, \lfloor \log T \rfloor - 1 \}$.
A union bound then gives: 
\begin{align*}
& \mathbb{P} \left[  \underset{t \in [UT -  h, UT +  h]}{\sup} \left| \Im \log \zeta (1/2 + it) \right|  \geq A \log \log T \right]
\\ & \leq  \sum_{k = 0}^{ \lfloor \log T \rfloor - 1 }  \mathbb{P} \left[  \left| \int_{-\infty}^{\infty} \Im \log \zeta(1/2 + i (UT   - 2 h + 4 h k /(\lfloor \log T \rfloor - 1)+ u H^{-1})) \varphi(u) du \right| \right.
\\ & \qquad \qquad \qquad \left.  \geq A' \log \log T +  \mathcal{O}_h(1)   \right]
\\ & \qquad  + (\log T)  \sum_{k = 0}^{ \lfloor \log T \rfloor - 1 }  \mathbb{P} \left[  \left| \int_{-\infty}^{\infty} \Im \log \zeta(1/2 + i (UT   - 2 h + 4 h k /(\lfloor \log T \rfloor - 1) + u H^{-1})) \varphi(u) du \right|
\right.
\\ & \qquad \qquad \qquad \left. \geq 10 \log \log T +  \mathcal{O}_h(1)   \right]
\\ & \qquad + \mathcal{O}_{\varphi,h,A,B} ( T^{-0.98}).
\end{align*}
By changing the implicit constant in $ \mathcal{O}_{\varphi,h,A,B} ( T^{-0.98})$, we can assume $T$ large enough, depending on $h$, in order to have
$$ A' \log \log T +  \mathcal{O}_h(1)   \geq B \log \log T,$$
$$ 10 \log \log T +  \mathcal{O}_h(1)   \geq 9\log \log T,$$
which then implies the result of the proposition. 
\end{proof}
It remains to estimate the probability involved in the last proposition. First, it is possible to split the probability into three pieces, involving sums on primes we discussed previously: 
\begin{proposition} \label{B1B2B3}
Let $B > 1$, $B_1, B_2, B_3 > 0$ such that $B_1 + B_2 + B_3  < B$. Let us take the notation of the previous proposition, and let us assume that $\varphi$ is even, with compactly supported Fourier transform. Then, under the Riemann hypothesis, 
and for any $R > 0$, $d \in [-2h,2h]$, 
\begin{align*}
& \mathbb{P} \left[  \left| \int_{-\infty}^{\infty} \Im \log \zeta(1/2 + i (UT + d + u H^{-1})) \varphi(u) du \right|
  \geq B \log \log T   \right] 
\\ & \leq P_1 + P_2 + P_3 + \mathcal{O}_{\varphi,h,B,B_1,B_2,B_3} (T^{-0.99}),
\end{align*}
where $$P_1 = \mathbb{P} \left[ \left|  \sum_{p \in \mathcal{P}, p \leq R } p^{-1/2 - i(UT + d)} \widehat{\varphi} \left( \frac{\log p}{H} \right) \right| \geq B_1 \log \log T \right],$$
$$P_2= \mathbb{P} \left[ \left| \sum_{p \in \mathcal{P},p >R} p^{-1/2 - i(UT + d)} \widehat{\varphi} \left( \frac{\log p}{H} \right) \right| \geq B_2 \log \log T \right],$$
$$P_3= \mathbb{P} \left[ \frac{1}{2} \left| \sum_{p \in \mathcal{P}} p^{-1 -2 i(UT + d)} \widehat{\varphi} \left( \frac{2 \log p}{H} \right) \right| \geq B_3 \log \log T \right].$$
\end{proposition}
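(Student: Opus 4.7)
The plan is to apply Proposition \ref{sumsprimes} with $\sigma = 1/2$ and $\tau = UT + d$, take imaginary parts, and expand the resulting prime-power sum to match $P_1$, $P_2$, $P_3$. Since $\varphi$ is real and even, $\widehat{\varphi}$ is real, and Proposition \ref{sumsprimes} gives
\begin{equation*}
\int_{-\infty}^{\infty} \Im \log\zeta(1/2 + i(\tau + tH^{-1}))\,\varphi(t)\,dt = \Im \sum_{n\geq 1} \ell(n) n^{-1/2 - i\tau}\,\widehat{\varphi}(\log n/H) + E,
\end{equation*}
with $|E| \ll_{\varphi} 1 + e^{\mathcal{O}_{\varphi}(H)}/(1+|\tau|)$. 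Using $\ell(p^k) = 1/k$, I split the prime-power sum as $S_1 + S_2 + S_3 + S_4$: the pieces $S_1, S_2$ are the imaginary parts of the $k=1$ sum restricted to $p \leq R$ and $p > R$ (matching $P_1, P_2$); $S_3$ is the $k = 2$ piece with the factor $1/2$ (matching $P_3$); and $S_4$ is the tail over $k \geq 3$.

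Two estimates follow routinely. First, $|S_4| \leq \|\widehat{\varphi}\|_{\infty} \sum_{p\in\mathcal{P}} \sum_{k\geq 3} p^{-k/2}/k \ll_{\varphi} 1$ by absolute convergence. Second, $E$ is $\mathcal{O}_{\varphi}(1)$ outside a thin slice in $U$: the choice $H = (\log T)(\log\log\log T)^{1/2}/\log\log T$ from Proposition \ref{B9} satisfies $H = o(\log T)$, so $e^{\mathcal{O}_{\varphi}(H)}$ is subpolynomial in $T$, and for any fixed $\delta > 0$ with $T$ sufficiently large (depending on $\varphi$) one has $e^{\mathcal{O}_{\varphi}(H)} \leq T^{\delta}$. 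Taking $\delta = 1/200$, the event $\{|UT + d| < T^{\delta}\}$ has probability $\mathcal{O}_{h}(T^{\delta - 1}) = \mathcal{O}(T^{-0.99})$, and on its complement $|E| \ll_{\varphi} 1$.

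Setting $B_4 := B - B_1 - B_2 - B_3 > 0$, one therefore has $|S_4| + |E| \leq B_4 \log\log T$ on the good event, provided $T$ is large enough depending on $\varphi, h, B, B_1, B_2, B_3$. On that good event, if the left-hand side is $\geq B \log\log T$, the triangle inequality applied to the decomposition $S_1 + S_2 + S_3 + S_4 + E$ forces $|S_j| \geq B_j \log\log T$ for some $j \in \{1, 2, 3\}$, and the trivial bound $|\Im z| \leq |z|$ upgrades this to the modulus inequality defining the event in $P_j$. A union bound then closes the argument. The main obstacle is controlling the $e^{\mathcal{O}_{\varphi}(H)}/(1 + |\tau|)$ part of $E$ with failure probability only $\mathcal{O}(T^{-0.99})$; this is only feasible because of the specific form of $H$ prescribed in Proposition \ref{B9}, which makes $e^{\mathcal{O}_{\varphi}(H)}$ subpolynomial despite $H$ being close to $\log T$, so that the slice of $U$ on which the error is not bounded is thin enough to absorb into the stated error.
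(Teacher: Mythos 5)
Your proposal is correct and follows essentially the same route as the paper: apply Proposition \ref{sumsprimes} at $\sigma = 1/2$, discard a slice of $U$ of probability $\mathcal{O}(T^{-0.99})$ so that the $e^{\mathcal{O}_\varphi(H)}/(1+|\tau|)$ part of the error is $\mathcal{O}_\varphi(1)$ (both you and the paper rely on $e^{\mathcal{O}_\varphi(H)}$ being $T^{o(1)}$ because $H = o(\log T)$), absorb the uniformly bounded $k\geq 3$ prime-power tail together with the remaining error into the slack $B - (B_1+B_2+B_3) > 0$, and finish with a triangle-inequality/union-bound decomposition into the three events defining $P_1, P_2, P_3$. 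The only cosmetic difference is bookkeeping: the paper splits the slack into two halves $B'$ (for the Proposition \ref{sumsprimes} error) and $B_4$ (for the $k \geq 3$ tail) and phrases the latter as a fourth probability that is identically zero for $T$ large, whereas you fold both into a single deterministic bound $|S_4| + |E| \leq (B - B_1 - B_2 - B_3)\log\log T$; the two versions are equivalent.
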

\begin{proof}
By using Proposition \ref{sumsprimes} for $\sigma = 1/2$, we get
\begin{align*}
& \int_{-\infty}^{\infty} \Im \log \zeta(1/2 + i (UT + d + u H^{-1})) \varphi(u) du
\\ & = \Im \sum_{n \geq 1} \ell(n) n^{-1/2 - i(UT+d)} \widehat{\varphi} \left( \frac{\log n}{H} \right)  + \mathcal{O}_{\varphi} \left( 1 + \frac{e^{\mathcal{O}_{\varphi} (H)}}{1 + |UT + d|}  \right).
\end{align*}
Discarding and event of probability $\mathcal{O}_{h}(T^{-0.99})$, we can assume that $UT + d \geq T^{1/100}$, which implies, since $H = (\log T) (\log \log \log T)^{1/2}(\log \log T)^{-1}$, that the error term is $\mathcal{O}_{\varphi}(1)$ 
 for $T$ large enough 
depending on $\varphi$, and then smaller than $B' \log \log T$ for $T$ large enough depending on $\varphi, B, B_1, B_2, B_3$, where 
$$B' := \frac{B - (B_1 + B_2 + B_3)}{2} > 0.$$
 The last condition on $T$ can be assumed by changing the implicit constant in $\mathcal{O}_{\varphi,h,B,B_1,B_2,B_3} (T^{-0.99})$, and then it is enough to show: 
\begin{align*}
& \mathbb{P} \left[  \left| \Im \sum_{n \geq 1} \ell(n) n^{-1/2 - i(UT+d)} \widehat{\varphi} \left( \frac{\log n}{H} \right)  \right|
  \geq (B - B') \log \log T   \right] 
\\ & \leq P_1 + P_2 + P_3 + \mathcal{O}_{\varphi,h,B,B_1,B_2,B_3} (T^{-0.99}).
\end{align*}
Now, we have 
\begin{align*}
& \mathbb{P} \left[  \left| \Im \sum_{n \geq 1} \ell(n) n^{-1/2 - i(UT+d)} \widehat{\varphi} \left( \frac{\log n}{H} \right)  \right|
  \geq (B - B') \log \log T   \right] 
\\ & \leq  \mathbb{P} \left[  \left| \sum_{n \geq 1} \ell(n) n^{-1/2 - i(UT+d)} \widehat{\varphi} \left( \frac{\log n}{H} \right)  \right|
  \geq (B - B') \log \log T   \right] 
\\ & \leq \mathbb{P} \left[  \left| \sum_{n \geq 1, \ell(n) = 1, n \leq R} \ell(n) n^{-1/2 - i(UT+d)} \widehat{\varphi} \left( \frac{\log n}{H} \right)  \right|
  \geq B_1  \log \log T   \right] 
\\ & + \mathbb{P} \left[  \left| \sum_{n \geq 1, \ell(n) = 1, n > R} \ell(n) n^{-1/2 - i(UT+d)} \widehat{\varphi} \left( \frac{\log n}{H} \right)  \right|
  \geq B_2  \log \log T   \right] 
\\ & + \mathbb{P} \left[  \left| \sum_{n \geq 1, \ell(n) = 1/2} \ell(n) n^{-1/2 - i(UT+d)} \widehat{\varphi} \left( \frac{\log n}{H} \right)  \right|
  \geq B_3  \log \log T   \right] 
\\ & + \mathbb{P} \left[  \left| \sum_{n \geq 1, 0 <  \ell(n) < 1/2} \ell(n) n^{-1/2 - i(UT+d)} \widehat{\varphi} \left( \frac{\log n}{H} \right)  \right|
  \geq B_4 \log \log T   \right],
\end{align*}
for $$B_4  := B - B' - B_1 - B_2 - B_3 =  \frac{B - (B_1 + B_2 + B_3)}{2} > 0.$$
In this sum of four probabilities, the first one is $P_1$, the second is $P_2$, the third is $P_3$ and the fourth is equal to $0$ for $T$ large enough depending only on $B_4$, since 
$$ \left| \sum_{n \geq 1, 0 <  \ell(n) < 1/2} \ell(n) n^{-1/2 - i(UT+d)} \widehat{\varphi} \left( \frac{\log n}{H} \right)  \right|
\leq \sum_{r \geq 3} \sum_{p \in \mathcal{P}} p^{-r/2}$$
is uniformly bounded (recall that $|\widehat{\varphi}| \leq 1$ since $\varphi$ is nonnegative with integral $1$).
\end{proof}
We will now estimate the probabilities $P_1, P_2, P_3$. The main tool we will use is Lemma 
3 of  Soundararajan \cite{Sound}, which is  presented as a standard mean value estimate by the author (a similar result can be found in Lemma 3.3 of Tsang's thesis \cite{Tsang84}), and which is stated as follows: 
\begin{lemma} \label{Lemma3Sound}
For $T$ large enough and $2 \leq x \leq T$, for $k$ a natural number such that 
$x^k \leq T/\log T$, and for any complex numbers $a(p)$ indexed by the primes, 
we have 
$$\int_{T}^{2T} \left| \sum_{p \leq x, p \in \mathcal{P}} \frac{a(p)}{p^{1/2 + it}} \right|^{2k} dt
\ll T k! \left( \sum_{p \leq x, p \in \mathcal{P}} \frac{|a(p)|^2}{p} \right)^k.$$
\end{lemma}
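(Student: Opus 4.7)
The result is a standard second-moment/mean-value estimate and I would prove it by expanding the $2k$-th power, separating diagonal and off-diagonal contributions, and comparing to the multinomial expansion of $(\sum_p |a(p)|^2/p)^k$. The hypothesis $x^k \leq T/\log T$ will be used precisely to absorb the off-diagonal piece.

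First, I expand the $2k$-th power into a sum over $2k$-tuples of primes $\leq x$. Collecting terms with the same product, for each positive integer $n$ write $n = \prod_i p_i^{\alpha_i}$ with $\sum_i \alpha_i = k$ and set
\[
A(n) := \sum_{\substack{(p_1,\dots,p_k) \\ p_1 \cdots p_k = n}} a(p_1) \cdots a(p_k) = \frac{k!}{\prod_i \alpha_i!}\, \prod_i a(p_i)^{\alpha_i},
\]
with $A(n) = 0$ unless $n$ is a product of exactly $k$ primes (with multiplicity) all $\leq x$, so that $n \leq x^k$. Then
\[
\Bigl|\sum_{p \leq x} \frac{a(p)}{p^{1/2+it}}\Bigr|^{2k} = \sum_{n,m \leq x^k} \frac{A(n)\overline{A(m)}}{\sqrt{nm}} \left(\frac{m}{n}\right)^{it}.
\]

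Integrating over $t \in [T,2T]$, the diagonal ($n=m$) contributes exactly $T \sum_n |A(n)|^2/n$. Since $(k!/\prod_i \alpha_i!)^2 \leq k! \cdot (k!/\prod_i \alpha_i!)$ (as $k!/\prod_i \alpha_i! \leq k!$), I compare with the multinomial expansion
\[
k!\, \Bigl(\sum_{p \leq x} \frac{|a(p)|^2}{p}\Bigr)^k = k!\, \sum_{(\alpha_i, p_i)} \frac{k!}{\prod_i \alpha_i!} \prod_i \frac{|a(p_i)|^{2\alpha_i}}{p_i^{\alpha_i}},
\]
and conclude that the diagonal contribution is at most $T\, k! (\sum_p |a(p)|^2/p)^k$, which is precisely the target.

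For the off-diagonal ($n \neq m$), the integral is bounded by $2/|\log(m/n)|$, so I need to control
\[
\sum_{\substack{n,m \leq N \\ n \neq m}} \frac{|A(n)||A(m)|}{\sqrt{nm}\,|\log(m/n)|}, \qquad N := x^k.
\]
Applying AM-GM $|A(n)||A(m)|/\sqrt{nm} \leq \tfrac{1}{2}(|A(n)|^2/n + |A(m)|^2/m)$ and symmetrizing reduces this to $\sum_n \frac{|A(n)|^2}{n} \sum_{m \neq n,\, m \leq N} \frac{1}{|\log(m/n)|}$. The inner sum I split into $m$ close to $n$ (say $|m-n| \leq n/2$, contributing $\ll n \log n$ using $|\log(m/n)| \gtrsim |m-n|/n$) and $m$ far from $n$ (contributing $\ll N$ using $|\log(m/n)| \gg 1$), so the inner sum is $\ll N$. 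Hence the off-diagonal is $\ll N \cdot k!\,(\sum_p |a(p)|^2/p)^k$, and the assumption $N = x^k \leq T/\log T$ makes this negligible compared with the diagonal.

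\textbf{Main obstacle.} The only delicate point is the off-diagonal estimate for $\sum_{m \neq n}1/|\log(m/n)|$: one must use that both $n$ and $m$ are integers (so $|\log(m/n)| \gtrsim 1/\max(n,m)$) and split the range according to whether $m$ is close to or far from $n$. Once this is done, the condition $x^k \leq T/\log T$ plays exactly the role it must, giving off-diagonal $\ll (T/\log T)\, k!\,(\sum_p |a(p)|^2/p)^k$, which is absorbed into the claimed bound.
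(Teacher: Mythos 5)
The paper does not actually prove this lemma: it quotes it as Lemma 3 of Soundararajan \cite{Sound}, describing it there as a standard mean-value estimate and pointing also to Lemma 3.3 of Tsang's thesis \cite{Tsang84}. Your argument is a self-contained reconstruction of the standard proof, and its skeleton is correct: expand the $2k$-th power, collect by the products $n=p_1\cdots p_k$ and $m=q_1\cdots q_k$, treat the diagonal $n=m$ via the multinomial inequality $(k!/\prod_i\alpha_i!)^2\le k!\cdot(k!/\prod_i\alpha_i!)$ (which gives exactly $\sum_n|A(n)|^2/n\le k!\,(\sum_p|a(p)|^2/p)^k$), and bound the off-diagonal using $|\int_T^{2T}(m/n)^{it}\,dt|\le 2/|\log(m/n)|$, AM--GM, and the integer separation $|\log(m/n)|\gtrsim|m-n|/\max(m,n)$.

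There is one small slip in the off-diagonal count. You claim $\sum_{m\ne n,\,m\le N}1/|\log(m/n)|\ll N$, but your own split (near part $\ll n\log n$, far part $\ll N$) gives $\ll n\log n + N$, which for $n$ close to $N=x^k$ is $\ll N\log N$, not $\ll N$. This does not invalidate the proof: with $N=x^k\le T/\log T$ one has $\log N=k\log x\le\log T$, so $N\log N\le T$, and the off-diagonal is therefore $\ll T\,k!\,\bigl(\sum_p|a(p)|^2/p\bigr)^k$, the same order as the diagonal, which is all the lemma requires. Your closing remark that the off-diagonal is "negligible compared with the diagonal" is thus a mild overstatement — it is merely of the same order — but the stated bound still follows. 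Incidentally this shows the hypothesis $x^k\le T/\log T$ is precisely what is needed to absorb the extra $\log N$ factor, so the condition is not wasteful.
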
 
The quantity $P_1$ can then be estimated as follows: 
\begin{lemma} \label{P1}
With the notation above, for $R := e^{\log T/(2 \log \log T)}$ and $B_1 > 1$,
$$P_1 \ll_{B_1} (\log T)^{-1 - \log B_1}.$$
\end{lemma}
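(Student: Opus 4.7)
The plan is to bound $P_1$ by a standard $2k$-th moment argument, using Markov's inequality together with Lemma \ref{Lemma3Sound}, and then optimize over $k$.

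First I would set $S(t) := \sum_{p \in \mathcal{P},\, p \leq R} p^{-1/2 - it}\, \widehat{\varphi}(\log p / H)$ so that $P_1 = \mathbb{P}\bigl[|S(UT+d)| \geq B_1 \log \log T\bigr]$, and apply Markov's inequality with an even exponent $2k$:
\[
P_1 \leq \frac{\mathbb{E}\bigl[|S(UT+d)|^{2k}\bigr]}{(B_1 \log \log T)^{2k}}.
\]
Since $UT + d$ is uniform on $[d, T+d]$, the expectation is $T^{-1} \int_d^{T+d} |S(t)|^{2k} dt$. I would control this integral by dyadic decomposition of $[d, T+d]$ into intervals of the form $[T_j, 2T_j]$, discarding the contribution of the initial dyadic blocks of length $\mathcal{O}(T^{1-\epsilon})$, which are absorbed in the target bound, and applying Lemma \ref{Lemma3Sound} to each remaining piece with $x = R$ and coefficients $a(p) = \widehat{\varphi}(\log p /H)$.

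The constraint $R^k \leq T/\log T$ from Lemma \ref{Lemma3Sound} translates, since $\log R = (\log T)/(2 \log \log T)$, to $k \leq (2-o(1))\log \log T$. Using $|\widehat{\varphi}| \leq 1$ (guaranteed by Proposition \ref{existence}) and Mertens' theorem, we have $\sum_{p \leq R} |\widehat{\varphi}(\log p/H)|^2/p \leq \log \log R + \mathcal{O}(1) \leq \log \log T + \mathcal{O}(1)$, which yields
\[
\mathbb{E}\bigl[|S(UT+d)|^{2k}\bigr] \ll k!\, (\log \log T + \mathcal{O}(1))^k.
\]
Combining this with the Markov bound and applying Stirling,
\[
P_1 \ll \frac{k!}{B_1^{2k}(\log \log T)^k} \ll \left( \frac{k}{e B_1^2 \log \log T} \right)^{k} \sqrt{k}.
\]

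The main step is then to choose $k$ optimally. Taking $k = \lfloor B_1 \log \log T \rfloor$ (which satisfies the constraint $k \leq 2 \log \log T$ for, say, $B_1 \leq 2$, and otherwise we truncate at the boundary and obtain a stronger bound) gives $\bigl(1/(e B_1)\bigr)^{B_1 \log \log T} = (\log T)^{-B_1(1+\log B_1)}$, and since $B_1 \geq 1$ implies $B_1(1 + \log B_1) \geq 1 + \log B_1$, this yields the announced bound $P_1 \ll_{B_1} (\log T)^{-1 - \log B_1}$. The main technical obstacle is the interplay between the constraint $k \leq 2 \log \log T$ and the target exponent: one must verify that the stated bound holds uniformly for all $B_1 > 1$ (including the regime $B_1 > \sqrt{2}$ where one is forced to use the boundary value of $k$), but this is straightforward since decreasing $k$ below the unconstrained optimum only worsens the moment bound monotonically, and the stated exponent $1 + \log B_1$ is weaker than what is obtained from either choice of $k$.
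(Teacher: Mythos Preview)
Your approach is correct and is essentially the same as the paper's: Markov's inequality combined with Lemma~\ref{Lemma3Sound} and Stirling. Two points where the paper is cleaner, and one small muddle in your write-up:

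\emph{Avoiding the dyadic decomposition.} Rather than splitting $[d,T+d]$ into dyadic blocks and separately handling the small ones, the paper absorbs the shift into the coefficients by setting $a(p) = p^{i(T-d)}\widehat{\varphi}(\log p/H)$. Then $\sum_{p\le R} a(p)p^{-1/2-it}$ evaluated at $t\in[T,2T]$ is exactly your $S(UT+d)$ as $U$ ranges over $[0,1]$, so Lemma~\ref{Lemma3Sound} applies directly on $[T,2T]$ with no decomposition.

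\emph{Choice of $k$.} The paper simply takes $k=\lfloor \log\log T\rfloor$, which is admissible for all $B_1>1$ and yields the (stronger) exponent $1+2\log B_1$ without any case analysis. Your $B_1$-dependent choice $k=\lfloor B_1\log\log T\rfloor$ also works, but the case split is unnecessary for the stated bound.

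\emph{Minor inconsistency.} You first say the constraint forces truncation for $B_1>2$, then later write $B_1>\sqrt{2}$; the latter would correspond to the true optimum $k=B_1^2\log\log T$, not the choice $k=B_1\log\log T$ you actually made. This does not affect correctness, since either choice (or the paper's $k=\lfloor\log\log T\rfloor$) already beats the target exponent $1+\log B_1$.
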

\begin{proof}
We apply Lemma \ref{Lemma3Sound}, with   $a(p) = p^{i(T-d)} \widehat{\varphi}(H^{-1}\log p)  $ and $x = R$. 

By Markov's inequality, we deduce, for $T$ large enough, and $k \geq 0$ integer such that $R^k \leq T/\log T$,
$$P_1 \ll (B_1 \log \log T)^{-2k} k! \left(\sum_{ p \in \mathcal{P}, p \leq R } p^{-1} \right)^k. $$
For $T$ large enough, it is possible to take $k  = \lfloor \log \log T \rfloor$, and then 
\begin{align*}
P_1 &  \ll  (B_1 \log \log T)^{-2\log \log T + 2} \left( e^{-1} \log \log T \right)^{\log \log T} \sqrt{ \log \log T}  \left( \log \log T + \mathcal{O}(1) \right)^{\log \log T}.
\\ & \ll_{B_1} (\log T)^{ -  \log B_1 - 1}.
\end{align*} 
By changing the implicit constant, we can remove the assumption that $T$ is large. 
\end{proof}
\begin{lemma} \label{P2}
With the notation above, for $R := e^{\log T/(2 \log \log T)}$ and $B_2 > 0$,
$$P_2 \ll_{\varphi, B_2} (\log T)^{-10}.$$
\end{lemma}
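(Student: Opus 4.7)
The plan is to proceed in analogy with Lemma \ref{P1}, applying Markov's inequality and Lemma \ref{Lemma3Sound}, but exploiting the crucial new feature that the summation now runs only over primes $p \in (R, e^{MH}]$, where $M$ is any constant such that $\widehat{\varphi}$ is supported in $[-M,M]$ (such $M$ exists by assumption on $\varphi$). Over this short window, $\sum 1/p$ is only of order $\log \log \log T$ rather than $\log \log T$, which is exactly the slack needed to beat any fixed negative power of $\log T$.

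First, set $x := e^{MH}$ and $a(p) := \widehat{\varphi}(H^{-1}\log p) \mathds{1}_{p > R}$, so that the sum in $P_2$ equals $\sum_{p \leq x} a(p) p^{-1/2 - i(UT+d)}$ with $|a(p)| \leq 1$. With the given $H = (\log T)(\log \log \log T)^{1/2}/\log \log T$ we have $\log x = MH$, and it will be convenient to fix $k := \lfloor c (\log \log T)/(\log \log \log T)^{1/2} \rfloor$ for a small constant $c = c(\varphi) > 0$ to be chosen. For such $k$, $x^k \leq T^{1/2}/\log T$ once $T$ is large enough.

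Next, I would discard the event $\{UT < T^{1/2}\}$, whose probability $T^{-1/2}$ is much smaller than $(\log T)^{-10}$. On the complement, Markov's inequality at the exponent $2k$ yields
$$P_2 \ll T^{-1} (B_2 \log \log T)^{-2k} \int_{T^{1/2}/2}^{T + 2h} \Bigl| \sum_{p \leq x} a(p) p^{-1/2 - it} \Bigr|^{2k} dt + T^{-1/2}.$$
Splitting the integration interval dyadically into pieces $[T', 2T']$ with $T' \geq T^{1/2}/2$ and applying Lemma \ref{Lemma3Sound} on each piece (the hypothesis $x^k \leq T'/\log T'$ is ensured on every such piece provided $c$ is chosen small enough that $MHk \leq \tfrac{1}{3}\log T$), one bounds the integral by $\ll T \cdot k! \cdot \bigl(\sum_{R < p \leq x} 1/p\bigr)^k$.

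Finally, by Mertens' theorem, $\sum_{R < p \leq x} 1/p = \log \log x - \log \log R + \mathcal{O}(1)$; since $\log \log R = \log \log T - \log \log \log T - \log 2$ and $\log \log x = \log \log T - \tfrac{1}{2}\log \log \log T + \mathcal{O}_\varphi(1)$, this difference equals $\tfrac{1}{2} \log \log \log T + \mathcal{O}_\varphi(1)$. Combining with Stirling's $k! \ll k^k e^{-k}\sqrt{k}$, the Markov bound becomes at most
$$\Bigl( \frac{C k \log \log \log T}{B_2^2 (\log \log T)^2} \Bigr)^k \sqrt{k} = \exp\bigl( -(1+o(1))\, c \, \log \log T \, (\log \log \log T)^{1/2} \bigr),$$
which is much smaller than $(\log T)^{-10}$. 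The main technical point is simply to verify that the hypothesis $x^k \leq T'/\log T'$ of Lemma \ref{Lemma3Sound} is preserved across every dyadic block down to $T' \asymp T^{1/2}$, which is precisely why we restrict to $UT \geq T^{1/2}$ and choose $c$ small; all other steps are routine adaptations of the proof of Lemma \ref{P1}.
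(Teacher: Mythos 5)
Your proof is correct, but it takes a different route from the paper's at one key point: how to reduce the averaging over $U \in [0,1]$ (equivalently over $t \in [d, T+d]$) to the interval $[T,2T]$ required by Lemma~\ref{Lemma3Sound}. The paper handles this with a twist: it puts the factor $p^{i(T-d)}$ into the coefficient $a(p)$ itself (i.e.\ $a(p) = p^{i(T-d)}\widehat{\varphi}(H^{-1}\log p)\mathds{1}_{p>R}$), so that after the change of variable $s = t + T - d$ the Dirichlet polynomial is unchanged and the integral is exactly over $[T,2T]$, with no loss; this is the same device used in the proof of Lemma~\ref{P1}. You instead leave $a(p)$ untwisted, throw away the small set $\{UT < T^{1/2}\}$ (cost $T^{-1/2}$, negligible), and chop $[T^{1/2}/2, T+2h]$ into dyadic blocks $[T', 2T']$ on each of which you apply Lemma~\ref{Lemma3Sound}. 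This works, but forces you to guarantee $x^k \le T'/\log T'$ down to $T' \asymp T^{1/2}$, which is why you need the small constant $c$ and the restriction $MHk \le \tfrac{1}{3}\log T$; the paper's twist removes this headache entirely and would let you push $k$ larger without worrying about the bottom of the dyadic range. A second, minor difference: you take $k \asymp (\log\log T)/(\log\log\log T)^{1/2}$ whereas the paper takes $k = \lfloor 100\log\log T / \log\log\log T\rfloor$; both choices satisfy the constraint and both give a bound far below $(\log T)^{-10}$ (yours is even a bit stronger in decay rate), so this is purely a matter of taste. Your Mertens computation (giving $\tfrac12\log\log\log T + \mathcal{O}_\varphi(1)$ for the prime sum on $(R, e^{MH}]$) is a slight sharpening of the paper's cruder bound $\ll\log\log\log T$ obtained by extending the range up to $T$; either suffices. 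Overall: valid, essentially the same moment method, but you reinvent a dyadic workaround where the paper uses the cleaner phase-twist.
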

\begin{proof}
We apply again Lemma \ref{Lemma3Sound}, now with $a(p) = p^{i(T-d)} \widehat{\varphi}(H^{-1}\log p)  \mathds{1}_{p > R}$, and $x$ equal to the smallest integer such that $a(p) = 0$ for all $p > x$. Since $\widehat{\varphi}$ is compactly  supported, one has $x = e^{\mathcal{O}_{\varphi}(H)}$.  For $T$ large enough depending on $\varphi$, we can then apply the lemma for $k = \lfloor 100  \log \log T (\log \log \log T)^{-1} \rfloor$, since under this assumption, 
$$( e^{\mathcal{O}_{\varphi}(H)} )^{k} \leq e^{\mathcal{O}_{\varphi}((\log \log \log T)^{-1/2} \log T ) } \leq \frac{T}{\log T}.$$
For $T$ large enough depending on $\varphi$, we have $k \leq \log \log T$, and 
$$\sum_{p \in \mathcal{P}} \frac{|a(p)|^2}{p} \leq \sum_{R < p \leq e^{\mathcal{O}_{\varphi}(H)}} p^{-1} \leq  \sum_{ e^{\log T/(2 \log \log T)} < p \leq T} p^{-1} \ll \log \log \log T.$$
Hence, 
\begin{align*}
P_2 & \ll  (B_2 \log \log T)^{-2k} k^k (\mathcal{O} (\log \log \log T))^k 
\leq (\log T)^{-10}, 
\end{align*}
for $T$ large enough depending on $\varphi$ and $B_2$. This proves the lemma. 
\end{proof}
\begin{lemma} \label{P3}
With the notation above, for  $B_3 > 0$,
$$P_3 \ll_{\varphi, B_3} (\log T)^{-10}.$$
\end{lemma}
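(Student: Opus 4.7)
The plan is to estimate a high moment of
$$S := \sum_{p\in\mathcal{P}} p^{-1-2i(UT+d)}\widehat{\varphi}\!\left(\frac{2\log p}{H}\right)$$
via Lemma \ref{Lemma3Sound} and then apply Markov's inequality, in the same spirit as Lemmas \ref{P1} and \ref{P2}. The case at hand is in fact the easiest of the three: each summand has modulus $p^{-1}\,|\widehat{\varphi}(2\log p/H)|$, so after writing $S = \sum_p a(p)/p^{1/2+2it}$ with $a(p) = p^{-1/2}\widehat{\varphi}(2\log p/H)$, the $\ell^2$-mass $\sum_p |a(p)|^2/p \leq \sum_p p^{-2} =: C$ is bounded by a finite \emph{absolute} constant, in contrast with Lemma \ref{P1}, where the analogous quantity was of order $\log\log T$.

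I will take $k = \lfloor c\log\log T/(\log\log\log T)^{1/2}\rfloor$ for a small constant $c = c(\varphi)$ chosen so that $x^k \leq T/\log T$, where $x = e^{\mathcal{O}_{\varphi}(H)}$ is the effective truncation of the sum imposed by the compact support of $\widehat{\varphi}$. (This is possible because $Hk = \mathcal{O}_{\varphi}(c\log T)$, and $c$ can be taken small enough to absorb the implicit constant.) After the substitution $s = 2t$, which converts $p^{-2it}$ into the form $p^{-is}$ required by Lemma \ref{Lemma3Sound}, I will apply the lemma dyadically on intervals $[T', 2T']$ ranging from $T' \asymp x^{k+1}$ up to $T$ and sum the resulting bounds to obtain
$$\int_{x^{k+1}}^{T+\mathcal{O}(h)} |S(t)|^{2k}\,dt \ll T\, k!\, C^k.$$
The contribution from the small-$t$ range $[0, x^{k+1}]$, where Lemma \ref{Lemma3Sound} is not available, will be handled via the trivial bound $|S(t)| \leq \sum_{p\leq x} p^{-1} \leq \log H + \mathcal{O}(1) \ll \log\log T$; by the choice of $c$, the resulting contribution is negligible. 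Dividing by $T$ yields $\mathbb{E}[|S|^{2k}] \ll k!\,C^k$.

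Markov's inequality then gives
$$P_3 \leq \frac{\mathbb{E}[|S|^{2k}]}{(2B_3\log\log T)^{2k}} \ll \left(\frac{Ck}{4B_3^2 (\log\log T)^2}\right)^k,$$
using $k! \leq k^k$. Substituting $k \asymp \log\log T/(\log\log\log T)^{1/2}$, the logarithm of the right-hand side is of order $-(\log\log T)(\log\log\log T)^{1/2}$, which easily beats $-10\log\log T$ once $T$ is large enough in terms of $\varphi$ and $B_3$. The main technical point will be the dyadic bookkeeping and the verification that $c$ can be chosen so that Lemma \ref{Lemma3Sound} applies on all relevant scales; this is essentially identical to the corresponding step in Lemma \ref{P2}.
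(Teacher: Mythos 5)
Your argument is correct, and it rests on the same ingredients as the paper's proof: Soundararajan's mean-value estimate (Lemma \ref{Lemma3Sound}), Markov's inequality, and the observation that the $\ell^2$-mass $\sum_p |a(p)|^2/p \leq \sum_p p^{-2} = \mathcal{O}(1)$ is now bounded (even better than the $\mathcal{O}(\log\log\log T)$ available in Lemma \ref{P2}). The only real difference is the bookkeeping. The paper avoids your dyadic decomposition and the trivial bound on the small-$t$ range entirely by absorbing the phase into the coefficients: taking $a(p) = \frac{1}{2} p^{-1/2+2i(T-d)}\widehat{\varphi}(2H^{-1}\log p)$ makes the integration variable $s = 2T+2UT$ run exactly over $[2T,4T]$, so Lemma \ref{Lemma3Sound} applies in one stroke with $T$ replaced by $2T$. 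This phase-absorption device is already used, implicitly, in the proofs of Lemmas \ref{P1} and \ref{P2} (that is why $a(p)$ there carries the factor $p^{i(T-d)}$), which is also why those proofs contain no dyadic sum. Finally, the paper keeps $k = \lfloor 100\log\log T/\log\log\log T\rfloor$ as in Lemma \ref{P2} so the whole computation there carries over verbatim after crudely bounding the constant $\mathcal{O}(1)$ by $\log\log\log T$; your larger choice $k \asymp \log\log T/(\log\log\log T)^{1/2}$ gives an even stronger final bound but forces the constant $c$ to depend on $\varphi$, which is permitted since the statement allows $\ll_{\varphi,B_3}$. Both comfortably beat $(\log T)^{-10}$.
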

\begin{proof}
We use Lemma \ref{Lemma3Sound} for $2T$ instead of $T$, and 
$a(p) = \frac{1}{2} 
p^{-1/2 + 2i(T-d)} \widehat{\varphi}(2 H^{-1}  \log p)$. Again, $a(p) = 0$ for 
$p \geq  e^{\mathcal{O}_{\varphi} (H)}$, and then we can again take $x =  e^{\mathcal{O}_{\varphi} (H)}$, and 
$k = \lfloor 100 \log \log T (\log \log \log T)^{-1} \rfloor$ for $T$ large enough depending on $\varphi$. 
Moreover, 
$$\sum_{p \in \mathcal{P}} \frac{|a(p)|^2}{p} \leq \sum_{p \in \mathcal{P}} p^{-2}
\ll 1 \ll \log \log \log T,$$
which implies that an exactly similar computation as in the previous proof gives  the result of the lemma. 
\end{proof}
We deduce the leading order of an upper bound  for $\Im \log \zeta$: 
\begin{proposition} \label{upperboundimaginary}
Let us assume the Riemann hypothesis. Then, 
for all $A > 1$, $h > 0$, $U$ random, uniform on $[0,1]$, 
$$\mathbb{P} \left[  \underset{t \in [UT -  h, UT +  h]}{\sup} \left| \Im \log \zeta (1/2 + it) \right|  \geq A\log \log T \right] \underset{T \rightarrow \infty}{\longrightarrow} 0.$$
\end{proposition}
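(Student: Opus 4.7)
The plan is to combine the three reductions already established: Proposition \ref{B9} (which controls the supremum by integrals around $\log T$ base points), Proposition \ref{B1B2B3} (which splits each such integral probability into three pieces indexed by sums over primes), and the estimates of Lemmas \ref{P1}, \ref{P2}, \ref{P3} on those prime sums.

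Given $A > 1$, I would first fix an intermediate threshold $B$ with $1 < B < A$, for example $B := (A+1)/2$, and apply Proposition \ref{B9} with this $B$. This bounds the probability in the statement by
\begin{align*}
& (\log T)\sup_{d\in[-2h,2h]} \mathbb{P}\!\left[\left|\int_{-\infty}^{\infty}\Im\log\zeta(1/2+i(UT+d+uH^{-1}))\varphi(u)\,du\right|\geq B\log\log T\right] \\
&+ (\log T)^2\sup_{d\in[-2h,2h]} \mathbb{P}\!\left[\left|\int_{-\infty}^{\infty}\Im\log\zeta(1/2+i(UT+d+uH^{-1}))\varphi(u)\,du\right|\geq 9\log\log T\right] \\
&+ \mathcal{O}_{\varphi,h,A}(T^{-0.98}),
\end{align*}
where $\varphi$ is taken as in Proposition \ref{existence} (even, rapidly decaying, with $\widehat{\varphi}$ compactly supported, as required).

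Next I would apply Proposition \ref{B1B2B3} to each of the two sup-probabilities. For the first, with threshold $B\log\log T$, I choose $B_1>1$ close enough to $B$ and $B_2,B_3>0$ small so that $B_1+B_2+B_3<B$; this is possible because $B>1$. For the second, with threshold $9\log\log T$, I choose $B_1 > e$ (say $B_1 = 7$), together with $B_2,B_3>0$ satisfying $B_1+B_2+B_3<9$. In each case, I set $R := e^{\log T/(2\log\log T)}$ so that Lemmas \ref{P1}, \ref{P2}, \ref{P3} apply directly.

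Finally, I combine the estimates. Lemma \ref{P1} contributes $(\log T)^{-1-\log B_1}$; Lemmas \ref{P2} and \ref{P3} contribute $\mathcal{O}_{\varphi}((\log T)^{-10})$. For the first sup-probability, multiplied by $\log T$, the dominant term is $(\log T)\cdot(\log T)^{-1-\log B_1} = (\log T)^{-\log B_1}\to 0$ since $B_1>1$. For the second sup-probability, multiplied by $(\log T)^2$, the dominant term is $(\log T)^2\cdot(\log T)^{-1-\log B_1}=(\log T)^{1-\log B_1}\to 0$ since $\log B_1 > 1$. The $P_2$ and $P_3$ contributions are easily absorbed by the $(\log T)^{-10}$ factor against any polylogarithmic multiplier, and the $T^{-0.98}$ error term is negligible. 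Hence the full probability tends to $0$.

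I do not anticipate a serious obstacle here, since all the hard analytic work has been done upstream; the delicate point is simply the bookkeeping of constants --- in particular verifying that $B_1$ can be chosen both greater than $1$ (for the $B<A$ step) and greater than $e$ (for the universal threshold $9$), which is where the factors $\log T$ and $(\log T)^2$ in Proposition \ref{B9} are calibrated against the $(\log T)^{-1-\log B_1}$ tail in Lemma \ref{P1}. The fact that $9$ is large enough to allow $B_1>e$ with $B_1+B_2+B_3<9$ is precisely what makes the geometric iteration in Lemma \ref{trois} pay off.
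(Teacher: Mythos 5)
Your proposal is correct and follows essentially the same route as the paper: apply Proposition \ref{B9} to reduce to the two sup-probabilities over averages, then decompose each via Proposition \ref{B1B2B3} and invoke Lemmas \ref{P1}--\ref{P3}. The only differences are bookkeeping choices of constants (the paper sets $B=\min(\sqrt{A},100)$, $B_1=\sqrt{B}$, $B_2=B_3=(B-\sqrt{B})/3$ uniformly for both thresholds, whereas you use $B=(A+1)/2$ and treat the thresholds $B\log\log T$ and $9\log\log T$ with separately chosen $B_1$), and these choices are equally valid.
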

\begin{proof}
We choose arbitrarily a function $\varphi$ satisfying all the assumptions given in this section, which is possible by Proposition \ref{existence}. 
By Proposition \ref{B1B2B3}, Lemmas \ref{P1}, \ref{P2}, \ref{P3}, we have for all 
$B > 1$, $B_1 > 1$, $B_2, B_3 > 0$ such that $B_1 + B_2 + B_3 < B$, 
\begin{align*}
& \underset{d \in [-2h,2h]}{\sup} \mathbb{P} \left[  \left| \int_{-\infty}^{\infty} \Im \log \zeta(1/2 + i (UT + d + u H^{-1})) \varphi(u) du \right| \geq B \log \log T 
\right] 
\\ & \ll_{\varphi,h,B,B_1,B_2,B_3}  (\log T)^{-1 - \log B_1} + (\log T)^{-10} 
+ T^{-0.99}
\end{align*}
By taking $B_1 = \sqrt{B}$, $B_2 = B_3 = 
(B- \sqrt{B})/3$, we deduce, for $1 < B \leq 100$, 
\begin{align*}
& \underset{d \in [-2h,2h]}{\sup} \mathbb{P} \left[  \left| \int_{-\infty}^{\infty} \Im \log \zeta(1/2 + i (UT + d + u H^{-1})) \varphi(u) du \right| \geq B \log \log T 
\right] 
\\ & \ll_{\varphi,h,B}  (\log T)^{-1 - (1/2) \log B},
\end{align*}
and in particular, 
\begin{align*}
& \underset{d \in [-2h,2h]}{\sup} \mathbb{P} \left[  \left| \int_{-\infty}^{\infty} \Im \log \zeta(1/2 + i (UT + d + u H^{-1})) \varphi(u) du \right| \geq 9 \log \log T 
\right] 
\\ & \ll_{\varphi,h,B}  (\log T)^{-1 - (1/2) \log 9} \ll (\log T)^{-2.09}.
\end{align*}
We then conclude the proof of the present proposition by taking (say) $B = \min(\sqrt{A}, 100)$ and by applying Proposition \ref{B9}.

\end{proof}

\section{A lower bound of the supremum in term of sums on primes} 
 \label{section:sum}
In this section, the quantity we will bound from below is the supremum of the positive part of 
$\Re (\kappa  \log \zeta (1/2 + i\tau))$, for $\tau \in [UT - h, UT + h]$, $\kappa \in \{1,i,-i\}$, $U$  uniform  on $[0,1]$. The parameter $\kappa$ is used to 
deal with the real part and the imaginary part of $\log \zeta$ at the same time. 
The facts we use are the following: on the one hand, averaging $\log \zeta$ makes this quantity smoother and then tends to decrease its supremum, on the other hand, by Proposition \ref{sumsprimes}, it gives something related to finite sums over primes. The goal of this section is to prove the following proposition: 
\begin{proposition} \label{comparison}
Let us assume the Riemann hypothesis. Then, for $T > 10$, $h  >0$, $U$ uniform on $[0,1]$, $H$ integer such that  $(\log (3+T))^{1/10} \leq H \leq \frac{\log T}{(\log \log T)^2}$, we have, with probability tending to $1$ when $T$ goes to infinity: 
\begin{align*}
& \underset{\tau \in [UT-h,UT+h]}{\sup} \left( \Re \left( \kappa \log 
\zeta \left( \frac{1}{2} + i \tau \right) \right) \right)_+
\\ & \geq \sup_{k \in \{0, 1, \dots, H-1\}}
\Re \left( \kappa \sum_{p \in \mathcal{P} \cap [1,e^H]}
p^{-\frac{1}{2} - i (TU - h/2 + kh/H)} \right)
 + \mathcal{O}_h (\sqrt{\log \log T}).
\end{align*}
\end{proposition}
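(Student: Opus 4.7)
The plan is to lower-bound $\sup_{\tau \in [UT-h,UT+h]}(\Re(\kappa\log\zeta(\tfrac{1}{2}+i\tau)))_+$ by a family of weighted averages indexed by $\tau_k := UT - h/2 + kh/H$ for $k \in \{0,\ldots,H-1\}$. Fix a nonnegative even $\varphi$ as furnished by Proposition \ref{existence}, and set $A_k := \int_{-\infty}^\infty \Re(\kappa \log \zeta(\tfrac{1}{2} + i(\tau_k + t/H)))\,\varphi(t)\,dt$. Splitting the integral at $|t| = hH/2$: on the inner range, $\tau_k + t/H \in [UT-h, UT+h]$, so by $\varphi \geq 0$ and $\int\varphi = 1$ this inner piece is bounded above by $(\sup_\tau\Re(\kappa\log\zeta(\tfrac{1}{2}+i\tau)))_+$. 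The tail is bounded using the Titchmarsh estimate $|\log\zeta(\tfrac{1}{2}+is)| \ll \log|s|$ (Theorem 9.4 of \cite{Tit}) together with the faster-than-polynomial decay of $\varphi$ and $H \geq (\log T)^{1/10}$, giving only $o(1)$.

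Next I apply Proposition \ref{sumsprimes} at $\sigma = \tfrac{1}{2}$ to get $A_k = \Re(\kappa \sum_{n \geq 1} \ell(n) n^{-1/2 - i\tau_k} \widehat\varphi(\log n/H)) + \mathcal{O}_\varphi(1 + e^{\mathcal{O}_\varphi(H)}/(1+|\tau_k|))$. This error is $\mathcal{O}_\varphi(1)$, since $H \leq \log T/(\log\log T)^2$ gives $e^{\mathcal{O}(H)} = T^{o(1)}$, and outside the event $\{UT < T^{1/100}\}$ (probability $T^{-0.99}$) one has $|\tau_k| \geq T^{1/100}$. I then split the prime-power sum by exponent: the contribution from $n = p^j$, $j \geq 3$, is uniformly $\mathcal{O}(1)$ since $\sum_p p^{-3/2} < \infty$, while the prime-square piece $\tfrac{1}{2}\sum_p p^{-1-2i\tau_k}\widehat\varphi(2\log p/H)$ has variance $\mathcal{O}(\sum_p p^{-2}) = \mathcal{O}(1)$ over $U$.

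The delicate step is to compare the weighted prime sum $\sum_p p^{-1/2-i\tau_k}\widehat\varphi(\log p/H)$ with the unweighted $\sum_{p \leq e^H} p^{-1/2-i\tau_k}$. The crucial observation is that because $\widehat\varphi$ is smooth and even with $\widehat\varphi(0) = 1$, one has $|1 - \widehat\varphi(x)| \ll_\varphi x^2$ uniformly near the origin. The variance of the discrepancy $D_k$ over $U$ is thus $\mathbb{E}[|D_k|^2] = \sum_{p \leq e^H} p^{-1}(1 - \widehat\varphi(\log p/H))^2 + \sum_{p > e^H} p^{-1}\widehat\varphi(\log p/H)^2 + o(1)$. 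The first sum is $\ll H^{-4}\sum_{p \leq e^H}(\log p)^4/p \ll 1$ by the prime number theorem, and the second is $\mathcal{O}(1)$ because $\widehat\varphi$ is compactly supported. With this $\mathcal{O}(1)$ variance in hand, Lemma \ref{Lemma3Sound} yields $\mathbb{E}[|D_k|^{2q}] \ll q^q$ for $q$ up to $\sim \log\log T$ (admissible precisely because $e^{\mathcal{O}(H)q} \leq T/\log T$ when $H \leq \log T/(\log\log T)^2$); by Markov, $\mathbb{P}[|D_k| > A\sqrt{\log\log T}] \ll (\log T)^{-2\log A}$. Taking $A$ a large enough constant and union-bounding over the $H \leq \log T$ values of $k$, both $\max_k|D_k|$ and the prime-square contribution are $\mathcal{O}_h(\sqrt{\log\log T})$ with probability $\to 1$. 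Collecting, $\sup_\tau(\Re(\kappa\log\zeta))_+ \geq \max_k A_k \geq \max_k \Re(\kappa \sum_{p \leq e^H} p^{-1/2-i\tau_k}) - \mathcal{O}_h(\sqrt{\log\log T})$.

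The main obstacle is establishing the $\mathcal{O}(1)$ variance for $D_k$ by exploiting the smoothness of $\widehat\varphi$ at the origin together with the prime number theorem asymptotic $\sum_{p \leq N}(\log p)^4/p \sim (\log N)^4/4$: the naive $\mathcal{O}(\log\log T)$ bound coming from $\widehat\varphi \in [0,1]$ would, after the moment-plus-union-bound argument, yield a final error of $\mathcal{O}(\log\log T)$ instead of $\mathcal{O}(\sqrt{\log\log T})$. The choice $H \leq \log T/(\log\log T)^2$ is tuned precisely so that Lemma \ref{Lemma3Sound} is admissible at the critical exponent $q \sim \log\log T$, which is in turn what is required by the union bound over $\mathcal{O}(\log T)$ points $\tau_k$.
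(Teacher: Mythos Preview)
Your argument is correct and follows the same route as the paper (Propositions \ref{comparisonrandom}--\ref{ridofphi}): average against $\varphi$, invoke Proposition \ref{sumsprimes}, strip prime powers of exponent $\geq 2$, and replace the smooth cutoff $\widehat\varphi(\log p/H)$ by the sharp cutoff $\mathds{1}_{p\le e^H}$; your moment bounds via Lemma \ref{Lemma3Sound} are a mild streamlining of the paper's explicit comparison with i.i.d.\ phases $(X_p)_{p\in\mathcal P}$ and hyperbolic-cosine exponential moments, but the two are equivalent in substance. One small correction: for the tail of the averaging integral you should cite $(\log|\zeta(\tfrac12+is)|)_+ \ll \log(2+|s|)$ and $|\Im\log\zeta(\tfrac12+is)|\ll\log(2+|s|)$ rather than $|\log\zeta(\tfrac12+is)|\ll\log|s|$ (the latter is false, since $\log|\zeta|$ is unbounded below near zeros), but only an upper bound on the tail is required, so this is precisely what the argument needs and what the paper's proof of Proposition \ref{comparisonrandom} uses.
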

In this proposition, the fact that $H$ is small with respect to $\log T$ is crucial, in order to estimate enough moments of the sums on primes. In the next sections, we will fix a precise value of $H$, which will be the integer part of  $(\log T)^{1-\delta}$, where $\delta > 0$ is a suitably chosen small positive parameter. The assumptions of Proposition   \ref{comparison} will then be satisfied for $T$ large enough. 

The first step of the proof of Proposition \ref{comparison} consists in bounding the left-hand side from below by a series which is similar to the right-hand side, but with smooth cutoff 
instead of sharp cutoff at $e^H$. Such smooth cutoff is naturally obtained by using Proposition \ref{sumsprimes}. 

\begin{proposition}\label{comparisonrandom}
Let us assume the Riemann hypothesis. Let 
$\varphi$ be a  real, nonnegative, even function, dominated by any negative power at infinity, and such that its Fourier transform $\psi := \widehat{\varphi}$ is compactly supported, takes values in $[0,1]$, is even and equal to $1$ at zero. 
For $H > 1$, $\tau \in \mathbb{R}$,  
let us define: 
$$\Lambda_{\psi} 
(\tau, H) := 
\sum_{n \geq 1} \ell(n) n^{-\frac{1}{2}- i \tau}
\psi\left( \frac{\log n}{H} \right).$$
Then, for $\kappa \in \{1,-i,i\}$, $h > 0$, $t \in \mathbb{R}$, $A >0$, 
we have 
\begin{align*}
& \sup_{\tau \in [t - h, 
t + h]} \left(\Re \left(\kappa \log \zeta
\left(\frac{1}{2} + i \tau \right) \right)
\right)_+
\\ & \geq \sup_{\tau \in [t - (h/2), 
t + (h/2)]} \Re(\kappa \Lambda_{\psi} 
(\tau, H))
+ \mathcal{O}_{\varphi,A,h} \left( 
1 + H^{-A} \log (2 + |t|) + 
\frac{e^{\mathcal{O}_{\varphi} (H)}}{1 + |t|} \right).
\end{align*}
Moreover, there exists $\alpha >0 $, depending only on $\varphi$, such that if we assume the extra condition
$(\log (3 + |t|))^{1/10} \leq H \leq 
\alpha \log (3 + |t|)$,  we have 
$$ \sup_{\tau \in [t - h, 
t + h]} \left(\Re \left( \kappa  \log \zeta
\left(\frac{1}{2} + i \tau \right) 
\right)
\right)_+
 \geq \sup_{\tau \in [t - (h/2), 
t + (h/2)]} \Re (\kappa \Lambda_{\psi} 
(\tau, H))
+ \mathcal{O}_{\varphi,h} (1).$$
Finally, let us assume that  $h > 0$, $T > 0$,  $H
\geq (\log (3+T))^{1/10}$ is an integer depending only on $T$, negligible with $\log T$ when $T$ goes to infinity, and $U$ is  a uniform variable on $[0,1]$. Then, with probability 
tending to $1$ when $T$ goes to infinity, we have 
\begin{align*}
& \sup_{\tau \in [UT - h, UT + h]}
\left(\Re \left( \kappa  \log \zeta
\left(\frac{1}{2} + i \tau \right)\right)
\right)_+
\\ & \geq \underset{k \in \{0,1, \dots, H-1\}}{\sup} \Re (\kappa \Lambda_{\psi} (UT - h/2
+ kh/H,H))
+ \mathcal{O}_{\varphi,h} (1).
\end{align*}
\end{proposition}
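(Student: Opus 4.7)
The plan is to realize the averaged integral $\int \log\zeta(\tfrac{1}{2} + i(\tau + tH^{-1}))\varphi(t)\,dt$ both as the series $\Lambda_\psi(\tau,H)$ (via Proposition \ref{sumsprimes}) and as a weighted average of $\log\zeta$ on the critical line against the nonnegative density $u\mapsto H\varphi((u-\tau)H)$ of total mass $1$. Comparing the two expressions yields the first claim. Concretely, for $\tau\in[t-h/2,t+h/2]$ a change of variable gives
$$\int_{-\infty}^{\infty}\log\zeta\bigl(\tfrac{1}{2}+i(\tau+tH^{-1})\bigr)\varphi(t)\,dt \;=\; H\int_{-\infty}^{\infty}\log\zeta\bigl(\tfrac{1}{2}+iu\bigr)\varphi((u-\tau)H)\,du,$$
and Proposition \ref{sumsprimes} identifies the real part of this (multiplied by $\kappa$) with $\Re(\kappa\Lambda_\psi(\tau,H)) + \mathcal{O}_\varphi(1+e^{\mathcal{O}_\varphi(H)}/(1+|\tau|))$.

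I would then split the $u$-integral at $|u-\tau|\le h/2$. On this main region, $u\in[t-h,t+h]$ (since $|\tau-t|\le h/2$), hence $\Re(\kappa\log\zeta(\tfrac12+iu))\le M := \sup_{\tau'\in[t-h,t+h]}(\Re(\kappa\log\zeta(\tfrac12+i\tau')))_+$. As $M\ge 0$ and $H\varphi((u-\tau)H)\ge 0$ integrates to at most $1$ on the main region, the contribution is $\le M$. On the tail $|u-\tau|>h/2$, I use the classical unconditional bound $|\log\zeta(\tfrac12+iu)|\ll\log(2+|u|)$ (Titchmarsh, Theorem 9.4) together with the decay hypothesis $\varphi(v)\ll_B (1+|v|)^{-B}$. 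After substituting $v=(u-\tau)H$, the tail is bounded by
$$\int_{|v|>hH/2}\mathcal{O}\bigl(\log(2+|t|)+\log(1+|v|/H)\bigr)|\varphi(v)|\,dv \;=\; \mathcal{O}_{\varphi,A,h}\bigl(1+H^{-A}\log(2+|t|)\bigr),$$
by taking $B=A+1$. Combining and rearranging (using $1+|\tau|\asymp 1+|t|$, which holds for $|t|\ge 2h$ and is irrelevant otherwise since the bound is $\mathcal{O}_h(1)$ there) gives
$$\Re\bigl(\kappa\Lambda_\psi(\tau,H)\bigr)\le M + \mathcal{O}_{\varphi,A,h}\Bigl(1+H^{-A}\log(2+|t|) + \tfrac{e^{\mathcal{O}_\varphi(H)}}{1+|t|}\Bigr),$$
and taking $\sup_\tau$ on the left yields the first claim.

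For the second claim, let $C$ denote the implicit constant in the exponent $e^{\mathcal{O}_\varphi(H)}$; then choosing $\alpha=1/(2C)$ gives $e^{CH}/(1+|t|)\le (3+|t|)^{C\alpha-1}\ll 1$ when $H\le \alpha\log(3+|t|)$, and choosing $A=10$ gives $H^{-A}\log(2+|t|)\le(\log(3+|t|))^{-A/10+1}=\mathcal{O}(1)$ when $(\log(3+|t|))^{1/10}\le H$. Both error terms collapse to $\mathcal{O}_{\varphi,h}(1)$.

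For the third claim, since $U$ is uniform on $[0,1]$, with probability at least $1-T^{-1/2}\to 1$ we have $UT\ge T^{1/2}$, in which case $\log(3+UT)\ge\tfrac12\log T$ eventually, so both $H\ge(\log(3+T))^{1/10}\ge(\log(3+UT))^{1/10}$ and $H=o(\log T)\le\alpha\log(3+UT)$ hold. The second claim applies to $t=UT$, and since each $\tau_k := UT-h/2+kh/H$ for $k\in\{0,\dots,H-1\}$ lies in $[UT-h/2,UT+h/2]$, we have $\sup_k\Re(\kappa\Lambda_\psi(\tau_k,H))\le\sup_{\tau\in[UT-h/2,UT+h/2]}\Re(\kappa\Lambda_\psi(\tau,H))$, giving the desired bound. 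The main point of vigilance is making sure the constant $\alpha$ is fixed compatibly with the implicit constants produced by Proposition \ref{sumsprimes}; once this is arranged, the rest is a bookkeeping of tail estimates and a union bound over the small-$UT$ event.
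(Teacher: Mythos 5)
Your proposal is correct and follows essentially the same route as the paper: apply Proposition \ref{sumsprimes} to identify $\Re(\kappa\Lambda_\psi(\tau,H))$ with the weighted average of $\Re(\kappa\log\zeta)$ against the unit-mass kernel $u\mapsto H\varphi((u-\tau)H)$, bound the main region $|u-\tau|\le h/2$ by $M$, bound the tail using the rapid decay of $\varphi$ and the classical growth bounds, then specialize $A$ and the $H$-window for the second claim and restrict to the discrete points and discard the small-$U$ event for the third. One small imprecision worth fixing: the bound $|\log\zeta(1/2+iu)|\ll\log(2+|u|)$ is false near zeros of $\zeta$; what is both true and sufficient (since you only upper-bound the tail integral) is the one-sided bound $(\Re(\kappa\log\zeta(1/2+iu)))_+\ll\log(2+|u|)$, coming from $(\log|\zeta|)_+\ll\log$ (Titchmarsh Thm.\ 4.11) for $\kappa=1$ and $|\Im\log\zeta|\ll\log$ (Titchmarsh Thms.\ 9.2, 9.6(B), or 9.4) for $\kappa=\pm i$.
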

\begin{proof}
By applying Proposition \ref{sumsprimes}, for $\sigma = 1/2$, we get 
\begin{align*}
\Re \left(\kappa \sum_{n \geq 1} \ell(n) n^{-1/2 - i\tau}\widehat{ \varphi}\left( \frac{\log n}{H} \right)  \right)
& = \int_{-\infty}^{\infty} \Re \left( \kappa
\log   \zeta(1/2 + i (\tau + tH^{-1})) \right)
\varphi(t) dt \\ &   + \mathcal{O}_{\varphi}
\left(1 + \frac{e^{\mathcal{O}_{\varphi} (H)}}{1+ |\tau|} \right).
\end{align*}
Since $\varphi$ is nonnegative, 
\begin{align*}
& \int_{-\infty}^{\infty} 
\Re \left(\kappa \log \zeta(1/2 + i (\tau + tH^{-1})) \right)
\varphi(t) dt 
\\ & \leq \left( \int_{-hH/2}^{hH/2} \varphi(t) dt \right)
\left( \underset{ u \in [\tau - h/2, \tau + h/2] }{\sup} \Re \left(\kappa \log (\zeta (1/2 + i u) \right) \right)
 \\&  + \int_{\mathbb{R} \backslash 
 [-h H/2, hH/2]}
\Re \left( \kappa  \log  \zeta(1/2 + i (\tau + tH^{-1})) \right)
\varphi(t) dt.
\end{align*}
Now, 
$(\log |\zeta (1/2 + i \tau)|)_+ = \mathcal{O}(\log (2+|\tau|))$ (for example, from Theorem 4.11 of \cite{Tit}). On the other hand, from Theorem 9.2 and Theorem 9.6 (B) of \cite{Tit}, 
$|\Im \log \zeta (1/2 + i \tau)| =
\mathcal{O}(\log (2+|\tau|))$.
Hence, the positive part of the last integral is dominated  by 
\begin{align*}
&\int_{\mathbb{R} \backslash 
 [-h H/2, hH/2]} \log (2+ |\tau| + |t| H^{-1}) \varphi(t)
 \leq 
 \int_{\mathbb{R} \backslash 
 [-h H/2, hH/2]} \log ((2+ |\tau|)
 (2 + |t|)) \varphi(t)
  \\ & \leq \log (2+ |\tau|) 
  \int_{\mathbb{R} \backslash 
 [-h H/2, hH/2]} \varphi 
 +  \int_{\mathbb{R} \backslash 
 [-h H/2, hH/2]} \log (2+|t|) \varphi(t) dt
 \\ & \ll_{A} (Hh)^{-A} \log (2 + |\tau|), 
\end{align*}
for all $A >0$. Here, in the first inequality, we used that $H > 1$, and in the last estimate, that $\varphi$ is rapidly decaying at infinity. 

Hence, 
\begin{align*}
& \int_{-\infty}^{\infty} 
\Re \left(\kappa \log \zeta(1/2+ i (\tau + tH^{-1})) \right)
\varphi(t) dt 
\\ & \leq \left( 1 - \mathcal{O}_A ((Hh)^{-A})
\right) 
 \underset{ u \in [\tau - h/2, \tau + h/2] }{\sup} \left( \Re \left( \kappa \log \zeta(1/2 + iu)  \right) \right)_+
   + \mathcal{O}_A \left( (Hh)^{-A}
 \log (2 + |\tau|) \right).
 \\ & \leq \underset{ u \in [\tau - h/2, \tau + h/2] }{\sup}  \left( \Re \left( \kappa \log \zeta(1/2 + iu)  \right) \right)_+
 +  \mathcal{O}_A \left( (Hh)^{-A}
 \log (2 + |\tau| + h) \right),
\end{align*}
and 
\begin{align*}
&\Re \left(\kappa \sum_{n \geq 1} \ell(n) n^{-1/2 - i\tau} \widehat{\varphi}\left( \frac{\log n}{H} \right)  \right)
\\ & \leq  \underset{ u \in [\tau - h/2, \tau + h/2] }{\sup} \left(\Re \left(\kappa \log \zeta (1/2 + i u) \right) \right)_+
 +  \mathcal{O}_{\varphi,A,h} \left( 1 + H^{-A} 
 \log (2 + |\tau|) + \frac{e^{\mathcal{O}_{\varphi}(H)}}{1 + |\tau|} \right).
\end{align*}
Taking the supremum over $\tau \in [t - h/2, t + h/2]$, which gives 
$$1 + |\tau| \gg_h 1 + |t|, 
\log (2 + |\tau|) \ll_h \log (2+ |t|),$$
we deduce 
\begin{align*}
&\underset{\tau \in [t-h/2, t+h/2]}{\sup}
\Re \left( \kappa \sum_{n \geq 1} \ell(n) n^{-1/2 - i\tau} \psi\left( \frac{\log n}{H} \right) \right) \\ &  \leq 
 \underset{ u \in [t  - h,t + h] }{\sup} \left(\Re \left( \kappa \log \zeta (1/2 + i u)\right) \right)_+
 + \mathcal{O}_{\varphi,A,h} \left( 1 + H^{-A} 
 \log (2 + |t|) + \frac{e^{\mathcal{O}_{\varphi}(H)}}{1 + |t|} \right)
 \end{align*}
This gives the first claim of the proposition.  For the second claim, we have, by assumption, $H \geq (\log 3)^{1/10} > 1$, and then we can apply the previous proposition, with $A = 10$. 
In the error term, 
$$H^{-A} \log (2 + |t|) \leq 
(\log (3 + |t|))^{-1} \log (2 + |t|) 
\leq 1,$$
and if we take $\alpha$ (depending only on $\varphi$) such that 
$\mathcal{O}_{\varphi}(H) \leq H/\alpha$, 
we get 
$$\frac{e^{\mathcal{O}_{\varphi}(H)}}{1 + |t|} \leq \frac{e^{\log (3 + |t|)}}{1 + |t|}
\leq 3,$$
which implies that the error term is 
$\mathcal{O}_{\varphi, h}(1)$. 

For the last claim of the proposition, we observe that  $H \geq (\log (3 + UT))^{1/10}$, and that for $T$ large enough, $H \leq (\alpha/2) \log (3+T)$ ($\alpha$ being the value associated to the second claim of the proposition), which implies 
\begin{align*}
\mathbb{P} [H \geq \alpha \log (3 + UT)]
 &\leq \mathbb{P} [(\alpha/2) \log (3+T)
\geq  \alpha \log (3+UT)] 
\leq \mathbb{P} [3+T \geq (3+UT)^2]
\\ & \leq \mathbb{P} [3 + T \geq 9 + U^2 T^2 ] \leq \mathbb{P} [U \leq 1/\sqrt{T}]
\underset{T \rightarrow \infty}{\longrightarrow} 0.
\end{align*}
Hence, we can apply the second estimate of the proposition to $t = TU$ with probability tending to $1$ when $T$ goes to infinity. 
By restricting the supremum in the right-hand side of the minorization, we are done. 
\end{proof}
The expression of $\Re (\kappa \Lambda_{\psi})$ is a sum indexed by the powers of primes. The following result shows that with high probability, one can 
get rid of the powers  with exponent strictly larger than $1$: 
\begin{proposition} \label{ridofpowers}
With the notation of the previous proposition, and under the extra condition 
$H \leq \frac{\log T}{(\log \log T)^2}$,  we have with probability tending to $1$ when $T$ goes to infinity: 
\begin{align*}
& \sup_{k \in \{0, 1, \dots, H-1\}} 
\left| \Lambda_{\psi} (TU - h/2 + kh/H, H)
- \sum_{p \in \mathcal{P}} 
p^{-\frac{1}{2} - i (TU - h/2 + kh/H)} 
\psi \left( \frac{\log p}{H} \right) \right| \\ & = \mathcal{O} (\sqrt{\log \log T}).
\end{align*}
\end{proposition}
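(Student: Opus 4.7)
The plan is to split $R(\tau, H) := \Lambda_\psi(\tau,H) - \sum_{p\in\mathcal{P}} p^{-1/2-i\tau}\psi(\log p/H)$ according to the prime-power exponent. Using $\ell(n) = 1/j$ for $n = p^j$ and zero otherwise,
\[
R(\tau, H) = \sum_{j \ge 2} \frac{1}{j} \sum_{p \in \mathcal{P}} p^{-j/2 - ij\tau}\psi\!\left(\frac{j \log p}{H}\right).
\]
For $j \ge 3$ the inner sum is absolutely bounded by $\sum_p p^{-j/2} \ll 2^{-j/2}$ (the geometric tail is dominated by the smallest prime), so the full $j \ge 3$ contribution is $\mathcal{O}(1)$ deterministically, uniformly in $\tau$ and $H$.

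The only substantive term is the $j = 2$ piece
\[
S(\tau) := \frac{1}{2}\sum_{p \in \mathcal{P}} p^{-1-2i\tau}\,\psi\!\left(\frac{2\log p}{H}\right),
\]
which I would control via Lemma \ref{Lemma3Sound}. Setting $a(p) := p^{-1/2}\psi(2\log p/H)$, we have $S(\tau) = \tfrac{1}{2}\sum_p a(p)\,p^{-1/2 - 2i\tau}$, fitting the lemma after the linear change of variable $s = 2\tau$ (which contributes a factor $4^{-k_\ast}$ in moments). The compact support of $\psi$ forces $a(p) = 0$ for $p > x := e^{\kappa H}$, where $\kappa$ depends only on $\psi$, and $\sum_p |a(p)|^2/p \le \sum_p p^{-2} =: C_0 = \mathcal{O}(1)$.

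Take the moment order $k_\ast := \lfloor \log\log T \rfloor$. The admissibility condition $x^{k_\ast} \le T/\log T$ reduces to $\kappa H k_\ast \le \log T - \log\log T$, which is ensured by the hypothesis $H \le \log T/(\log\log T)^2$. Covering $[T^{1/100}, T]$ by $\mathcal{O}(\log T)$ dyadic intervals, applying the lemma on each (with the change of variable $s = 2\tau$) and summing the resulting geometric series yields $\int_{T^{1/100}}^{T} |S(\tau)|^{2k_\ast}\, d\tau \ll T\, k_\ast!\, (C_0/4)^{k_\ast}$. Markov's inequality, together with the trivial bound $\mathbb{P}(UT + d \le T^{1/100}) = \mathcal{O}(T^{-99/100})$, then gives for any fixed $d \in [-h/2, h/2]$
\[
\mathbb{P}\!\left(|S(UT+d)| \ge A\sqrt{\log\log T}\right) \ll \sqrt{\log\log T}\,(\log T)^{-\log(4eA^2/C_0)} + T^{-99/100}.
\]
Choosing $A$ to be a sufficiently large absolute constant (depending only on $C_0$) makes this $o(1/\log T)$, and a union bound over the $H \le \log T$ offsets $d_k = -h/2 + kh/H$ completes the proof.

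The main obstacle is the calibration of $k_\ast$ against the upper bound on $H$: we need $k_\ast$ large enough that the factor $(\log T)^{-\log(4eA^2/C_0)}$ beats a union bound over $H$ points, yet small enough that $e^{\kappa H k_\ast}$ stays below $T/\log T$ so that Lemma \ref{Lemma3Sound} applies. The assumption $H \le \log T/(\log\log T)^2$ is precisely what makes the choice $k_\ast \asymp \log\log T$ satisfy both constraints simultaneously, and this common value produces the $\sqrt{\log\log T}$ threshold appearing in the proposition (since the typical size of $S$ at the $2k_\ast$-th moment is $\sqrt{k_\ast}$).
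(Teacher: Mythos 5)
Your proposal is correct, and for the substantive $j=2$ piece it takes a genuinely different route from the paper. The paper expands the $2r$-th moment of $S(\tau) = \tfrac12\sum_p p^{-1-2i\tau}\psi(2\log p/H)$ by hand, isolating the diagonal contribution (which reproduces the i.i.d.\ random-phase model $\sum_p X_p/(2p)$) and bounding the off-diagonal terms by the oscillatory-integral estimate $\ll \max(p_1\cdots p_r,q_1\cdots q_r)/T$; it then sums the hyperbolic-cosine series to run an exponential (Chernoff) Chebyshev at $\lambda = \sqrt{\log\log T}$, giving a per-offset tail probability $\ll 1/\log T$, which beats the union bound over $k$ because $H = o(\log T)$. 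You instead invoke Lemma \ref{Lemma3Sound} as a black box (which packages exactly the same diagonal/off-diagonal dichotomy, and is indeed used for the analogous purpose in Lemma \ref{P3}), apply a single polynomial Chebyshev at moment order $2k_\ast \asymp \log\log T$, and get a tail of size $\sqrt{\log\log T}\,(\log T)^{-\log(4eA^2/C_0)}$, which for $A$ large enough (absolute, since $C_0 = \sum_p p^{-2}$ is) is strictly better than $1/\log T$ and hence also kills the union bound over $H\le\log T$ offsets. The two approaches rely on the same arithmetic input and the same calibration: the truncated sum only involves primes up to $e^{\mathcal{O}_\psi(H)}$, so one needs $e^{\mathcal{O}(H)k_\ast} \le T/\log T$, which is what the hypothesis $H\le\log T/(\log\log T)^2$ guarantees with $k_\ast\asymp\log\log T$. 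Your version is cleaner in leaning on the cited mean-value lemma rather than rederiving it; the only bookkeeping worth making fully explicit is that when you cover $[T^{1/100},T]$ by dyadic intervals in the $s=2\tau$ variable, the constraint $x^{k_\ast}\le T_j/\log T_j$ must hold at the smallest dyadic endpoint $T_j\asymp T^{1/100}$, which it does because $\kappa Hk_\ast \ll \log T/\log\log T = o(\log T)$.
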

\begin{proof}
It is sufficient to check that with probability tending to $1$, 
$$\sup_{k \in \{0,1, \dots, H-1\}}
\left| \sum_{r \geq 3} \sum_{p \in \mathcal{P}} 
\frac{1}{r} p^{- \frac{r}{2} - ir (UT - h/2 + kh/H)} 
\psi \left( \frac{r \log p}{H} \right)
\right|
 = \mathcal{O} (\sqrt{\log \log  T}), $$
 $$\sup_{k \in \{0,1, \dots, H-1\}}
\left| \sum_{p \in \mathcal{P}} 
\frac{1}{2} p^{- 1 - 2i (UT - h/2 + kh/H)} 
\psi \left( \frac{2 \log p}{H} \right)
\right|
 = \mathcal{O} (\sqrt{\log \log T}). $$
 The first supremum is uniformly bounded by
 the universal constant
 $$\sum_{r \geq 3} \sum_{n \geq 2} 
 n^{-r/2} < \infty,$$
 it is then sufficient to bound the second supremum. For $r \geq 0$ integer, the $2r$-th moment of the quantity inside the supremum is equal to 
 $$2^{-2r}  
 \sum_{p_1, \dots, p_r, q_1, \dots, q_r \in \mathcal{P}}
 \prod_{j=1}^r (p_jq_j)^{-1}\psi \left( \frac{2 \log p_j}{H} \right) \psi \left( \frac{2 \log q_j}{H} \right) 
  \int_{0}^{1}
 \left( \frac{p_1 \dots p_r}{q_1 \dots q_r} \right)^{-2i(uT - h/2 + kh/H)} du. $$
The sum of the terms such that 
$$\prod_{j =1}^r 
 p_j = \prod_{j=1}^r 
 q_j$$ 
 is equal to 
 $$\mathbb{E} \left[ \left|
 \sum_{p \in \mathcal{P}} \frac{X_p}{2p} \psi \left( \frac{2 \log p}{H} \right) \right|^{2r} \right],$$
 where $(X_p)_{p \in \mathcal{P}}$ are i.i.d., uniform on the unit circle. 
 For any other term, the integral above between $0$ and $1$ is at most, in absolute value, 
 $$\left( T \left| \log \left(\frac{p_1 
 \dots p_r}{q_1 \dots q_r} \right) \right| 
 \right)^{-1} = T^{-1} 
 \left| \int_{p_1 \dots p_r}^{q_1 \dots q_r} \frac{dx}{x} \right|^{-1} 
 \leq \frac{\max(p_1 \dots p_r, q_1 \dots q_r)}{T},$$
 since the two bounds of the integral are two distinct integers, which implies that the length of the interval of integration is at least $1$. 
 We deduce that the sum of the terms for which 
 $p_1 \dots p_r \neq q_1 \dots q_r$ is at most, in absolute value: 
 $$2^{1-2r} T^{-1}
 \sum_{p_1, \dots, p_r, q_1 \dots q_r \in \mathcal{P} \cap [1, e^{AH/2}], p_1 \dots p_r < q_1 \dots q_r} (p_1 \dots p_r)^{-1},$$
 if $\psi$ is supported in $[-A,A]$ (recall that $0 \leq \psi \leq 1$).
 The last sum is bounded by 
 $$\left(\sum_{p \in \mathcal{P} \cap [1, e^{AH/2}]}
 p^{-1} \right)^r
 \left(\sum_{p \in \mathcal{P} \cap [1, e^{AH/2}]}
1 \right)^r = [\mathcal{O}(\log  (2 + AH))]^r
\left(\mathcal{O} \left( \frac{e^{AH/2}}{1+AH} \right)\right)^r \leq B^r e^{rAH/2},
$$
 where $B > 1$ is a universal constant. 
 We then get, for $r \geq 1$, 
 \begin{align*}
 \mathbb{E} \left[\left| \sum_{p \in \mathcal{P}} 
\frac{1}{2} p^{- 1 - 2i (UT - h/2 + kh/H)} 
\psi \left( \frac{2 \log p}{H} \right)
\right|^{2r} \right] 
\leq \frac{B^r e^{rAH/2}}{T}  + \mathbb{E} \left[ \left|
 \sum_{p \in \mathcal{P}} \frac{X_p}{2p} \psi \left( \frac{2 \log p}{H} \right) \right|^{2r} \right].
 \end{align*}
 By crudely bounding  each term 
 of the sum on $p \in \mathcal{P}$  by its absolute value, we also get 
 $$\mathbb{E} \left[\left| \sum_{p \in \mathcal{P}} 
\frac{1}{2} p^{- 1 - 2i (UT - h/2 + kh/H)} 
\psi \left( \frac{2 \log p}{H} \right)
\right|^{2r} \right] 
\leq \left( \sum_{p \in \mathcal{P} \cap [1, e^{AH/2}]} 
p^{-1} \right)^{2r} 
\leq (B' \log(2+ AH))^{2r} ,$$
where $B' >1$ is universal. 
By summing the hyperbolic cosine series, we obtain, separating the cases 
$B^r e^{rAH/2} \leq T$ and $B^r e^{rAH/2} > T$, for $\lambda > 0$,  
\begin{align*}
&\mathbb{E} \left[ 
\cosh \left( \lambda \left| \sum_{p \in \mathcal{P}} 
\frac{1}{2} p^{- 1 - 2i (UT - h/2 + kh/H)} 
\psi \left( \frac{2 \log p}{H} \right)
\right| \right) \right]
\\ & \leq \sum_{0 \leq r \leq \frac{2\log T}
{AH + 2 \log B }} \frac{\lambda^{2r}}{(2r)!}
 + \sum_{r >  \frac{2\log T}
{AH + 2 \log B }} \frac{(\lambda B' \log (2+AH))^{2r}}{(2r)!} 
+ \mathbb{E} \left[ \cosh \left(\lambda
\left|\sum_{p \in \mathcal{P}} 
\frac{X_p}{2p} \psi \left(\frac{2 \log p}{H} \right) \right| \right) \right].
\end{align*}
The first sum is bounded by $\cosh \lambda \leq e^{\lambda}$. Now, by looking at the ratio between two consecutive terms and by bounding the first term with the Stirling formula, we deduce, for all $u > 0$, 
$$\sum_{r \geq 2u} \frac{u^{2r}}{(2r)!} 
\ll  1.$$
Hence, the second sum above is dominated by $1$, provided that 
$$\frac{2 \log T}{AH + 2 \log B} 
\geq 2 B' \lambda  \log (2+AH),$$
i.e.
$$B' \lambda (AH + 2 \log B) \log (2 +AH) 
\leq \log T.$$
Now, since we assume that $H \leq \frac{\log T}{(\log \log T)^2}$, we check that this condition is satisfied for $T$ large enough (depending on $\varphi$), if 
$\lambda \ll \sqrt{\log \log T}$. Finally,
from the inequality 
$$\cosh |z| \leq e^{|z|} \leq e^{|\Re(z)| + 
|\Im(z)|} 
\leq e^{2 \sup(|\Re(z)|, |\Im(z)|)}
\leq e^{2 \Re(z)} + e^{- 2 \Re(z)} +
e^{2 \Im(z)} + e^{-2 \Im(z)},$$
 the rotation invariance and the symmetry of the law of $X_p$, we get
\begin{align*}
& \mathbb{E} \left[ \cosh \left(\lambda
\left|\sum_{p \in \mathcal{P}} 
\frac{X_p}{2p} \psi \left(\frac{2 \log p}{H} \right) \right| \right) \right]
 \leq 4 \, \mathbb{E} \left[ \exp \left( 2 \lambda
\Re \sum_{p \in \mathcal{P}} 
\frac{X_p}{2p} \psi \left(\frac{2 \log p}{H} \right)  \right) \right]
\\ & = 4 \, \prod_{p \in \mathcal{P} 
\cap [1,e^{AH/2}]} 
\mathbb{E} \left[ e^{ \frac{\lambda 
\psi(2 H^{-1}\log p)}{p} \Re X_p} \right]
= 4 \, \prod_{p \in \mathcal{P} 
\cap [1,e^{AH/2}]} 
\mathbb{E} \left[ \cosh \left( \frac{\lambda 
\psi(2 H^{-1}\log p)}{p} \Re X_p \right) \right]
\\ & \leq 4 \prod_{p \in \mathcal{P} 
\cap [1,e^{AH/2}]} \cosh (\lambda/p)
\leq 4 \prod_{p \in \mathcal{P} 
\cap [1,e^{AH/2}]} e^{\lambda^2 /2p^2}
\leq 4 e^{\lambda^2 \sum_{n \geq 2} n^{-2}/2} \leq 4 e^{\lambda^2}. 
\end{align*}
Hence, for $0 \leq \lambda \ll \sqrt{\log \log T}$ and $T$ large enough depending on $\varphi$, 
$$\mathbb{E} \left[ 
\cosh \left( \lambda \left| \sum_{p \in \mathcal{P}} 
\frac{1}{2} p^{- 1 - 2i (UT - h/2 + kh/H)} 
\psi \left( \frac{2 \log p}{H} \right)
\right| \right) \right] \ll  e^{\lambda^2}.
$$
and then 
$$\mathbb{E} \left[ 
\exp \left( \lambda \left| \sum_{p \in \mathcal{P}} 
\frac{1}{2} p^{- 1 - 2i (UT - h/2 + kh/H)} 
\psi \left( \frac{2 \log p}{H} \right)
\right| \right) \right] \ll  e^{\lambda^2}.
$$
Note that estimates of exponential moments of linear combinations of $X_p$ have are also intensively used in the paper 
by Arguin, Belius and Harper \cite{ABH15}, where randomized versions of $\zeta$ are considered. 

The probability that the sum inside the exponential is larger than $2 \sqrt{\log \log T}$ is then dominated by 
$$ e^{-2\lambda \sqrt{\log \log T} + \lambda^2} = e^{- \log \log T} = \frac{1}{\log T}, $$
by taking $\lambda = \sqrt{\log \log T}$. 
A union bound on $k$ gives 
$$\mathbb{P} \left[\sup_{k \in \{0,1, \dots, H-1\}}
\left| \sum_{p \in \mathcal{P}} 
\frac{1}{2} p^{- 1 - 2i (UT - h/2 + kh/H)} 
\psi \left( \frac{2 \log p}{H} \right)
\right| \geq 2 \sqrt{\log \log T} \right] = \mathcal{O} \left( \frac{H}{\log T} \right),$$
which tends to zero when $T$ goes to infinity. 
\end{proof}
In the  next result, we show that we can replace the smooth cutoff by a sharp cutoff, and then get rid of the function $\varphi$: 
\begin{proposition} \label{ridofphi}
Under the same condition as in the previous proposition (in particular $H \leq \log T (\log \log T)^{-2}$), we have with probability tending to $1$ when $T$ goes to infinity: 
\begin{align*}
& \sup_{k \in \{0, 1, \dots, H-1\}} 
\left| \sum_{p \in \mathcal{P}} 
p^{-\frac{1}{2} - i (TU - h/2 + kh/H)} 
\left( \mathds{1}_{p \leq e^H} - \psi \left( \frac{\log p}{H} \right) \right) \right|  = \mathcal{O}_{\varphi} (\sqrt{\log \log T}).
\end{align*}
\end{proposition}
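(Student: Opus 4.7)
The plan is to follow the same moment-method scheme as in the proof of Proposition~\ref{ridofpowers}. Set $c_p := \mathds{1}_{p \le e^H} - \psi(\log p / H)$ and
\begin{equation*}
D_k := \sum_{p \in \mathcal{P}} c_p \, p^{-\frac{1}{2} - i(UT - h/2 + kh/H)}.
\end{equation*}
Since $\psi$ is compactly supported in some interval $[-A,A]$ with $A = A_\varphi$ and takes values in $[0,1]$, one has $|c_p| \le 1$ uniformly and $c_p = 0$ for $p > e^{AH}$. So the support of $c_p$ is the ``boundary layer'' around $p = e^H$, and the whole game is to show that, despite the sharp cutoff, the contribution of this layer to $D_k$ is no worse than $\sqrt{\log \log T}$ with high probability.

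First I would compute the even moment $\mathbb{E}[|D_k|^{2r}]$ for a single $k$ and a positive integer $r$, by expanding and integrating against the law of $U$. Exactly as in the proof of Proposition~\ref{ridofpowers}, the $2r$-tuples split into a diagonal part, where $p_1 \cdots p_r = q_1 \cdots q_r$ and the integral over $u \in [0,1]$ equals $1$, and an off-diagonal part, where the integral is bounded by $\max(p_1 \cdots p_r, q_1 \cdots q_r)/T \le e^{rAH}/T$. Summing over all tuples, the off-diagonal contribution is at most $T^{-1} e^{O_\varphi(rH)}$, which under the hypothesis $H \le \log T / (\log\log T)^2$ stays $o(1)$ for every $r \le c_\varphi (\log\log T)^2$. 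The diagonal part is exactly $\mathbb{E}[|G|^{2r}]$, where $G := \sum_p c_p X_p / \sqrt{p}$ and the $(X_p)_{p \in \mathcal{P}}$ are i.i.d.\ uniform on the unit circle.

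The crucial point is that $\sigma^2 := \sum_p c_p^2 / p = O_\varphi(1)$. Using the prime number theorem and the change of variable $u = \log p / H$, the sum is comparable to $\int_0^A c(u)^2\, du/u$ with $c(u) := \mathds{1}_{u \le 1} - \psi(u)$. Since $\psi$ is even and smooth with $\psi(0) = 1$ (the smoothness coming from the rapid decay of $\varphi$), $1 - \psi(u) = O_\varphi(u^2)$ near $u = 0$, so $c(u)^2/u = O(u^3)$ near the origin and is integrable there; the compact support of $\psi$ then truncates the integral at $u = A$. Rotation invariance of $(X_p)$ factorizes the exponential moment:
\begin{equation*}
\mathbb{E}\!\left[\exp\!\left(\lambda \, \Re \sum_p \tfrac{c_p X_p}{\sqrt{p}}\right)\right] = \prod_p \mathbb{E}\!\left[\exp\!\left(\tfrac{\lambda c_p}{\sqrt{p}} \Re X_p\right)\right] \le \exp\!\left(\tfrac{\lambda^2}{4}\sigma^2\right),
\end{equation*}
and the Cauchy--Schwarz trick for $|G| \le |\Re G| + |\Im G|$ used in Proposition~\ref{ridofpowers} then gives $\mathbb{E}[\cosh(\lambda|G|)] \ll e^{O_\varphi(\lambda^2)}$.

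Combining the two ingredients, I would obtain $\mathbb{E}[\cosh(\lambda |D_k|)] \ll_\varphi e^{O_\varphi(\lambda^2)}$ uniformly for $0 \le \lambda \ll \sqrt{\log\log T}$. Choosing $\lambda = \sqrt{\log\log T}$ and applying Markov's inequality yields $\mathbb{P}[|D_k| \ge C_\varphi \sqrt{\log\log T}] = O(1/\log T)$ for each $k$, and a union bound over the $H \le \log T$ values of $k$ concludes. The main obstacle I expect is the $O_\varphi(1)$ bound on $\sigma^2$: the sharp jump of $\mathds{1}_{p \le e^H}$ at $p = e^H$ makes it tempting to fear a logarithmic loss in the boundary layer, and the estimate only works because the smoothness of $\psi$ cancels out the leading singular behaviour near $u = 0$ and confines the contribution to a compact range of scales.
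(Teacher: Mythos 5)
Your proposal follows the paper's strategy closely: same moment expansion, same diagonal/off-diagonal split in the $2r$-th moments, and the key ingredient is the same, namely that $\sigma^2 := \sum_p c_p^2/p = O_\varphi(1)$, which is what saves the proposition from a $\log\log T$ variance loss at the sharp cutoff. Your justification of this estimate is a slight variant of the paper's: the paper bounds $|\chi(x)| \ll_\varphi x$ on $[0,1]$ from $\chi(0)=0$ and smoothness, and then uses the prime number theorem via summation by parts to bound $\sum_{p \le e^{A'H}} \log^2 p / p \ll H^2$; you instead use the evenness of $\psi$ to get the stronger $1 - \psi(u) = O(u^2)$ and appeal to an integral comparison. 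Both work, and the discrepancy is harmless.

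There is, however, a genuine gap in the passage from moments to an exponential moment bound. You claim ``$\mathbb{E}[\cosh(\lambda|D_k|)] \ll_\varphi e^{O_\varphi(\lambda^2)}$'' and pass directly to Markov and a union bound. But as you yourself observe, the off-diagonal contribution to $\mathbb{E}[|D_k|^{2r}]$ is only controlled (it is $\le T^{-1}e^{O_\varphi(rH)} = o(1)$) for $r \le c_\varphi(\log\log T)^2$; for larger $r$ the off-diagonal bound contributes roughly $T^{-1}\cosh\!\left(O_\varphi(\lambda e^{A'H/2})\right)$ to the $\cosh$-series, which blows up. So the moment method does \emph{not} directly bound the exponential moment of the untruncated $|D_k|$. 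The paper deals with this by replacing $|D_k|$ with the truncation $\Delta := \min\bigl(|D_k|,\,\log\log T\bigr)$: then $\mathbb{E}[\Delta^{2r}] \le (\log\log T)^{2r}$ trivially, so the tail of the $\cosh$-series over $r > \log T/(2A'H) \asymp (\log\log T)^2$ is controlled by the Stirling-type bound once $\lambda \log\log T$ is below the truncation threshold, which is the source of the constraint $H \le \log T(\log\log T)^{-2}$. Since the event $\{|D_k| \ge C_\varphi\sqrt{\log\log T}\}$ coincides with $\{\Delta \ge C_\varphi\sqrt{\log\log T}\}$ for large $T$, the truncation is invisible in the final estimate. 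You should state the truncation explicitly; without it, the claimed exponential moment bound does not follow from (and may well fail despite) your moment estimates.
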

\begin{proof}
As above, we first compute the moment of order $2r$ of the quantity inside the supremum, and we get:  $$ 
 \sum_{p_1, \dots, p_r, q_1, \dots, q_r \in \mathcal{P}}
 \prod_{j=1}^r (p_jq_j)^{-1/2}\chi \left( \frac{ \log p_j}{H} \right) \chi \left( \frac{ \log q_j}{H} \right) 
  \int_{0}^{1}
 \left( \frac{p_1 \dots p_r}{q_1 \dots q_r} \right)^{-i(uT - h/2 + kh/H)} du,$$
 where $\chi(x) := \mathds{1}_{|x| \leq 1} 
 - \psi(x)$. 
 The sum of the terms with $p_1 \dots p_r
 = q_1 \dots q_r$ is equal to 
 $$\mathbb{E} \left[ \left| \sum_{p \in \mathcal{P}} \frac{X_p}{\sqrt{p}} 
 \chi \left( \frac{\log p}{H} \right) \right|^{2r} \right].$$
 By majorizing the oscillating integral between $0$ and $1$ as in the previous proof, we get a majorization, in absolute value, of the sum of the terms with $p_1 \dots p_r \neq q_1 \dots q_r$, by 
 $$4 T^{-1} \sum_{p_1, \dots, p_r, 
 q_1 \dots q_r \in \mathcal{P} \cap [1, e^{A'H}], p_1 \dots p_r < q_1 \dots q_r}
 (p_1 \dots p_r)^{-1/2} (q_1 \dots q_r)^{1/2}$$
 where $A' = \max(A,1)$ (recall that $-1 \leq \chi \leq 1$ and that $\chi$ is supported in $[-A',A']$).
 Now, for $a \in \{-1/2,1/2\}$, the crude bound 
 $$\sum_{p \in \mathcal{P} \cap [1, e^{A'H}]} p^a \leq \sum_{n=1}^{\lfloor e^{A'H} \rfloor} n^a \leq \int_0^{ e^{A'H}+1} x^a dx  \leq \frac{(2e^{A'H})^{a+1}}{a+1} \leq 3 e^{(a+1)A'H},$$
 used $r$ times for $a = 1/2$ and $r$ times for $a = -1/2$, gives 
 \begin{align*}
 & \mathbb{E} \left[\left| \sum_{p \in \mathcal{P}} 
p^{-\frac{1}{2} - i (TU - h/2 + kh/H)} 
\left( \mathds{1}_{p \leq e^H} - \psi \left( \frac{\log p}{H} \right) \right) \right|^{2r} \right]
\\ & \leq \mathbb{E} \left[ \left| \sum_{p \in \mathcal{P}} \frac{X_p}{\sqrt{p}} 
 \chi \left( \frac{\log p}{H} \right) \right|^{2r} \right]
  + \frac{4 \cdot 3^{2r} e^{2rA'H}}{T}.
 \end{align*}
 Now, if we write 
 $$\Delta = \min \left(\left| \sum_{p \in \mathcal{P}} 
p^{-\frac{1}{2} - i (TU - h/2 + kh/H)} 
\left( \mathds{1}_{p \leq e^H} - \psi \left( \frac{\log p}{H} \right) \right) \right|, \log \log T \right),$$ 
 we have obviously $\mathbb{E} [\Delta^{2r}] \leq (\log \log T)^{2r}$, and then, by expanding the hyperbolic cosine, we get, for all $\lambda > 0$,  
 \begin{align*}\mathbb{E} 
 \left[ \cosh(\lambda \Delta) \right] 
&  \leq \mathbb{E} \left[ \cosh\left|\lambda \sum_{p \in \mathcal{P}} \frac{X_p}{\sqrt{p}} 
 \chi \left( \frac{\log p}{H} \right) \right| \right]
 + \sum_{0 \leq r \leq \log T/(2 A'H)}
 \frac{4 \cdot (3 \lambda)^{2r}}{(2r)!}
\\ &  + \sum_{r > \log T/(2A'H)} \frac{(\lambda \log \log T)^{2r}}{(2r)!}
\\ &  =  \mathbb{E} \left[ \cosh\left|\lambda \sum_{p \in \mathcal{P}} \frac{X_p}{\sqrt{p}} 
 \chi \left( \frac{\log p}{H} \right) \right| \right] + \mathcal{O} \left(e^{3\lambda} \right),
\end{align*}
provided that 
$$\frac{\log T}{2A' H} \geq 2 \lambda \log \log T.$$
For $\lambda \ll \sqrt{\log \log T}$, 
this inequality is true for $T$ large enough since we assume $H \leq \frac{\log T}{(\log \log T)^2}$. 
As in the previous proof, we get 
 \begin{align*}
& \mathbb{E} \left[ \cosh \left(\lambda
\left|\sum_{p \in \mathcal{P}} 
\frac{X_p}{\sqrt{p}} \chi \left(\frac{ \log p}{H} \right) \right| \right) \right]
 \leq 4 \, \mathbb{E} \left[ \exp \left( 2 \lambda
\Re \sum_{p \in \mathcal{P}} 
\frac{X_p}{\sqrt{p}} \chi \left(\frac{ \log p}{H} \right)  \right) \right]
\\ & = 4 \, \prod_{p \in \mathcal{P} 
\cap [1,e^{A'H}]} 
\mathbb{E} \left[ e^{ \frac{2 \lambda 
\chi( H^{-1}\log p)}{\sqrt{p}} \Re X_p} \right]
= 4 \, \prod_{p \in \mathcal{P} 
\cap [1,e^{A'H}]} 
\mathbb{E} \left[ \cosh \left(
\frac{2\lambda 
\chi( H^{-1}\log p)}{\sqrt{p}} \Re X_p \right) \right]
\\ & \leq 4 \prod_{p \in \mathcal{P} 
\cap [1,e^{A'H}]} \cosh (2 \lambda \chi( H^{-1}\log p)/\sqrt{p})
\leq 4 \prod_{p \in \mathcal{P} 
\cap [1,e^{A'H}]} e^{2 \lambda^2
\chi^2( H^{-1}\log p) /p}.
\end{align*}
Since $\chi$ is smooth on $[0,1]$ and equal to $0$ at $0$, we have 
$$|\chi(x)| \leq \int_{0}^x |\chi'(y)| dy
\leq x \sup_{[0,1)} |\psi'|
\ll_{\varphi} x$$
for $x \in [0,1]$. Of course, this estimate remains true for $x > 1$ since $|\chi| \leq 1$. Hence 
$$\sum_{p \in \mathcal{P} 
\cap [1,e^{A'H}]} \frac{\chi^2 (H^{-1} \log p)}{p} 
\ll_{\varphi} H^{-2} \sum_{p \in \mathcal{P} 
\cap [1,e^{A'H}]}  \frac{\log^2 p}{p}.$$
 If $\pi$ denotes the prime counting function, we get for $t > 1$, by using the prime number theorem at the third line (a weak form is sufficient here):
 \begin{align*}\sum_{p \in \mathcal{P} 
\cap [1,t]}  \frac{\log^2 p}{p}
 & = \int_{[1,t]} \frac{\log^2 x}{x} d \pi(x)
 \\ & = \left[ \frac{\log^2 x}{x}  \pi(x)
 \right]_1^t
 - \int_1^t \frac{2 \log x - \log^2 x}{x^2} 
 \pi(x) dx 
 \\ & \ll \log t + \int_{1}^t \frac{1 + \log x}{x} dx \ll \log^2 t.
 \end{align*}
 This estimates gives 
 $$\sum_{p \in \mathcal{P} 
\cap [1,e^{A'H}]} \frac{\chi^2 (H^{-1} \log p)}{p} \ll_{\varphi} 1,$$
$$ \mathbb{E} \left[ \cosh \left(\lambda
\left|\sum_{p \in \mathcal{P}} 
\frac{X_p}{\sqrt{p}} \chi \left(\frac{ \log p}{H} \right) \right| \right) \right]
\ll e^{\mathcal{O}_{\varphi}(\lambda^2)}
$$
and then 
$$\mathbb{E}[e^{\lambda \Delta}]
\ll \mathbb{E}[\cosh(\lambda \Delta)] 
\ll e^{\mathcal{O}_{\varphi} (\lambda^2)}.
$$
By taking $\lambda= \sqrt{\log \log T}$, we get, for $C > 0$, 
$$\mathbb{P}[\Delta \geq C \sqrt{\log \log T}] \ll e^{-(C - \mathcal{O}_{\varphi}
(1)) \log \log T}
 \ll \frac{1}{\log T},$$
 if $C$ is large enough depending only on $\varphi$. Since $C \sqrt{\log \log T}
 \leq \log \log T$ for $T$ large enough depending only on $\varphi$, we deduce, under these conditions, 
 $$\mathbb{P} 
\left[\left| \sum_{p \in \mathcal{P}} 
p^{-\frac{1}{2} - i (TU - h/2 + kh/H)} 
\left( \mathds{1}_{p \leq e^H} - \psi \left( \frac{\log p}{H} \right) \right) \right| \geq \mathcal{O}_{\varphi}
(\sqrt{\log \log T}) \right]
\ll \frac{1}{\log T}$$
 Taking the union bound on $k \in \{0,1, \dots, H-1\}$ gives the desired result, since 
 $H = o(\log T)$ for $T \rightarrow \infty$. 
\end{proof}
We can now easily finish the proof of Proposition \ref{comparison}. 

{\it Proof of Proposition \ref{comparison}:} 
 We first arbitrarily fix a function $\varphi$ satisfying the assumptions of 
Proposition \ref{comparisonrandom}, which is possible by Proposition \ref{existence}. Then, we combine Propositions \ref{comparisonrandom}, \ref{ridofpowers} and \ref{ridofphi}.

\section{Comparison with Gaussian variables}
 \label{section:gaussian}

From now, we fix  the following quantities: $h > 0$, $T > 10$, $\delta \in (0,1/2)$, $K \geq 2$ integer, 
$H := \lfloor(\log T)^{1- \delta} \rfloor$.
For $T$ large enough depending on $\delta$, 
Proposition \ref{comparison} applies, since 
$(\log (3+T))^{1/10} \leq H  \leq \frac{\log T}{(\log \log T)^2}$. 
We then define, for $m \in \{0, 1, \dots, K-1\}$, $k \in \{0,1, \dots, H-1\}$: 
$$S(k, m) 
:=\Re \left( \kappa \sum_{p \in \mathcal{P}
\cap (e^{e^{m \log H / K}}, 
e^{e^{(m+1) \log H/K}} ]} p^{-\frac{1}{2} 
- i (TU - h/2 + kh/H)} \right).$$
We now see that the main term in the right-hand side of the estimate in Proposition \ref{comparison} is (up to $\mathcal{O}(1)$ because of the term
indexed by $p=2 \leq e^{e^0}$) equal to the supremum, for $k \in \{0,1, \dots, H-1\}$, 
of the sum of $S(k,m)$ for $m \in \{0,1,\dots,K-1\}$.
 Hence, if we show that with high probability, there exists $k \in \{0,1, \dots, H-1\}$ such that all the
values of $S(k,m)$ ($m \in \{0,1,\dots,K-1\}$) are large, then we will deduce that with high probability, the supremums involved in Proposition \ref{comparison} are also large. This will give a lower bound for the supremum of $\Re (\kappa \log \zeta)$ on the segment $[1/2 + i(UT-h),1/2 + i(UT+h)]$. 

Note that this method of cutting the sums into a fixed, large number $K$ of pieces, is classical in the study of the leading order term of the maximum of log-correlated fields: it is called {\it coarse graining} by  Kistler in \cite{bib:Kistler}, and it is also used by Arguin, Belius and Bourgade in \cite{bib:ABB}. 

Let us recall that the random variable $S(k,m)$ implicitly depends  on 
$T$, $\delta$ (which together give $H$), $K$, $h$ and $\kappa$. We will also use a truncated version of $S(k,m)$, defined as follows: 
$$S_0(k,m) := \min( (\log T)^{\delta/3}, 
\max (-(\log T)^{\delta/3}, S(k,m))).$$
The reason of such a truncation is the following: we will need to consider exponential moments of $S(k,m)$, which can be 
expanded by using the usual exponential series, giving a series of moments of $S(k,m)$. Only moments of sufficiently small order can be controlled in an efficient way, so we need to truncate the exponential series. The error we make with this truncation is acceptable only if $S(k,m)$ is not too large, which is ensured if it is replaced by 
$S_0(k,m)$.

We will show that in a sense which is made precise, the variables $S_0(k,m)$ for $0 \leq k \leq H-1$,   $0 \leq m \leq K-1$ are not far from being the 
components of a Gaussian vector with a similar covariance structure. 

This  comparison with Gaussian variables, which is also classical in the study of log-correlated fields,  will be done in two steps. 

In the first step, we observe that the random phases $(p^{-iUT})_{p \in \mathcal{P}}$ 
tend in law to i.i.d. uniform random variables $(X_p)_{p \in \mathcal{P}}$ on the unit circle, in the sense of the finite-dimensional marginals. It is then natural 
to compare the variables $S(k,m)$ and $S_0(k,m)$ with the variables $V(k,m)$ defined by 
$$V(k,m) := \Re \left( \kappa  \sum_{p \in \mathcal{P}
\cap (e^{e^{m \log H / K}}, 
e^{e^{(m+1) \log H/K}} ]} X_p p^{-\frac{1}{2} 
- i (- h/2 + kh/H)} \right).$$ 
Indeed, we will show that the joint Fourier-Laplace transforms of $S_0(k,m)$ and $V(k,m)$ are close to each other, when they are taken at points 
whose modulus is not too large. For this purpose, we need to get a comparison between the variables $(p^{-iUT})_{p \in \mathcal{P}}$  and 
 $(X_p)_{p \in \mathcal{P}}$ which goes beyond the finite-dimensional marginals, and in particular, in moment computations,  it is essential to deal 
with sums only involving primes which are much smaller than $T$ (indeed, $e^H \leq e^{(\log T)^{1-\delta}} = T^{o(1)}$). 

The second step consists in a comparison between the variables  $V(k,m)$ and the components of a Gaussian vector.
A very similar  comparison is done by Arguin, Belius and Harper 
\cite{ABH15} in their study of the supremum of randomized versions of $\log |\zeta|$; we can also mention an earlier work by Kowalski and Nikeghbali \cite{KowNik} on a close topic. 
 In our comparison with Gaussian vectors, we use the independence 
of the variables $(X_p)_{p \in \mathcal{P}}$ in a crucial way. Indeed, if we remove the variables $V(k,0)$ which involve the small primes, and which needs a separate study, the variables $V(k,m)$, $1 \leq m \leq K-1$ involve sums of many independent variables with small variances.

Here is the main result obtained in the first step: 
\begin{proposition} \label{TV}
For all $\lambda_0, \dots, \lambda_{K-1}$, 
$\mu_0, \dots, \mu_{K-1}$, complex with modulus at most $(\log T)^{\delta/100}$, and for all $k, \ell \in \{0,1, \dots,H-1\}$, we have 
\begin{align*}\mathbb{E} 
\left[ \exp \left( \sum_{m=0}^{K-1} 
(\lambda_m S_0(k,m) + \mu_m S_0(\ell,m)) \right)
\right] & = \mathbb{E} 
\left[ \exp \left( \sum_{m=0}^{K-1} 
(\lambda_m V(k,m) + \mu_m V(\ell,m)) \right)
\right]\\ &  + \mathcal{O} 
\left( \exp (-(\log T)^{\delta/4}) \right),
\end{align*}
if $T$ is large enough depending only on 
$\delta$ and $K$. 

\end{proposition}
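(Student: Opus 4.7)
The plan is to Taylor-expand both exponentials, truncate at order $N:=\lfloor(\log T)^{\delta/2}\rfloor$, and compare the resulting mixed moments term by term. The underlying identity is that for $U$ uniform on $[0,1]$ one has $\mathbb{E}[(M_1/M_2)^{-iUT}] = 1$ when $M_1=M_2$ and $|\mathbb{E}[(M_1/M_2)^{-iUT}]|\le 2\max(M_1,M_2)/T$ otherwise (via $|\log(M_1/M_2)|\ge 1/\max(M_1,M_2)$), while the corresponding moment computed with i.i.d.\ uniform phases $X_p$ produces exactly $1$ on the diagonal $\{M_1=M_2\}$ and vanishes off-diagonal. The choice of $N$ is forced by three constraints: it must exceed (up to logs) the a.s.\ magnitude $R_0\ll_K(\log T)^{\delta/3+\delta/100}$ of the exponent on the $S_0$-side, it must lie in the admissible range $N\le c(\log T)^\delta$ for Lemma~\ref{Lemma3Sound} applied at $x=e^H$ (since $e^{HN}\le T/\log T$ is required), and it must satisfy $HN=o(\log T)$ so that off-diagonal errors remain summable. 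The choice $N=(\log T)^{\delta/2}$ meets all three with room.

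With this $N$, the Taylor remainder on the $S_0$-side is bounded by $R_0^{N+1}/(N+1)!$; since $\delta/2>\delta/3+\delta/100$, this is at most $\exp(-c(\log T)^{\delta/2}\log\log T)$, negligible compared with the target error. On the $V$-side, each $V(k,m)$ is a sum of independent centered bounded contributions $\Re(\kappa X_p p^{-1/2-i(\cdots)})$ with $\sum_{p\in I_m}1/p \ll \log\log T$, so Khintchine-type estimates yield the sub-Gaussian moment bound $\mathbb{E}[|V(k,m)|^{2j}]^{1/2j}\le C\sqrt{j\log\log T}$, and a Cauchy--Schwarz split of $\mathbb{E}[|z|^{N+1}e^{|z|}]$ gives the same acceptable Taylor remainder.

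It remains to compare, for each $(a,b)$ with $|a|+|b|=n\le N$, the moments $\mathbb{E}[\prod_m S_0(k,m)^{a_m}S_0(\ell,m)^{b_m}]$ and $\mathbb{E}[\prod_m V(k,m)^{a_m}V(\ell,m)^{b_m}]$. I first pass from $S_0$ to $S$: the difference is supported on the event $A:=\bigcup_m\{|S(k,m)|\vee|S(\ell,m)|>(\log T)^{\delta/3}\}$, and Lemma~\ref{Lemma3Sound} gives $\mathbb{E}[S(k,m)^{2j}]\le(Cj\log\log T)^j$ for all $j\le N$. Optimizing in $j$ via Chebyshev yields $\mathbb{P}(A)\le\exp(-(\log T)^{2\delta/3+o(1)})$; combined, via Cauchy--Schwarz, with the H\"older-type bound $\mathbb{E}[\prod_m S(k,m)^{2a_m}S(\ell,m)^{2b_m}]\le(CN\log\log T)^N$, this controls the replacement error by $\exp(-(\log T)^{2\delta/3}/3)$ uniformly in $(a,b)$.

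Finally, expanding each $S(k,m)^{a_m}$ and $V(k,m)^{a_m}$ by writing $\Re(\kappa z)=(\kappa z+\bar\kappa\bar z)/2$ and multiplying out, both mixed moments become finite sums indexed by the same ordered tuples of primes in $\bigcup_m I_m$ with $\pm$-sign patterns and identical combinatorial coefficients; the two versions differ only in the terminal expectation, $\mathbb{E}[(M_1/M_2)^{-iUT}]$ for $S$ and $\mathbb{E}[\prod_p X_p^{n_p^+-n_p^-}]$ for $V$. The diagonal contributions $\{M_1=M_2\}$ coincide exactly, and off-diagonal the $V$-term vanishes. The sum in absolute value of the off-diagonal $S$-contributions is bounded, via $\prod_i p_i^{-1/2}\cdot\max(M_1,M_2)\le\prod_i p_i^{1/2}$ and $\sum_{p\in I_m}p^{1/2}\ll e^{3H/2}/H$, by $N(2K)^N(\log T)^{N\delta/100}e^{3HN/2}/T$, which because $HN=(\log T)^{1-\delta/2}=o(\log T)$ is at most $e^{-\log T/2}$, dwarfed by $e^{-(\log T)^{\delta/4}}$. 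The main obstacle is the careful simultaneous balance of the three constraints on $N$ and the propagation of Soundararajan's moment bound through the Hölder and Cauchy--Schwarz steps; everything else amounts to routine bookkeeping.
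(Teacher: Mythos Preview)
Your argument is essentially the paper's own: Taylor-expand both exponentials, pass through the untruncated $\mathcal{S}=\sum_m(\lambda_m S(k,m)+\mu_m S(\ell,m))$, control the off-diagonal contributions via $|\mathbb{E}[(M_1/M_2)^{-iUT}]|\le 2\max(M_1,M_2)/T$ and the crude bound $\sum_{p\le e^H}p^{1/2}\ll e^{3H/2}$, and bound the Taylor tails using moment estimates up to order $\asymp(\log T)^{\delta/2}$. The paper streamlines the bookkeeping by working throughout with the scalar powers $\mathcal{S}^r,\mathcal{S}_0^r,\mathcal{V}^r$ rather than your multi-index moments $\prod_m S(k,m)^{a_m}S(\ell,m)^{b_m}$, which spares the H\"older step and the extra combinatorial factors but is otherwise the same proof.

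One point to tighten. Your claimed bound $\mathbb{P}(A)\le\exp(-(\log T)^{2\delta/3+o(1)})$ requires the Chebyshev exponent $j\asymp(\log T)^{2\delta/3}/\log\log T$, which exceeds $N=(\log T)^{\delta/2}$. That value of $j$ is still in the admissible range $j\lesssim(\log T)^\delta$ for Lemma~\ref{Lemma3Sound} (as you yourself note when discussing the constraints on $N$), so the bound is valid; but the sentence ``for all $j\le N$'' as written is misleading, and if one really restricts to $j\le N$ the resulting $\mathbb{P}(A)\le\exp(-c(\log T)^{\delta/2}\log\log T)$, after Cauchy--Schwarz against $(CN\log\log T)^{N/2}$, does \emph{not} decay. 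The paper avoids this delicacy by taking $w=\lfloor(\log T)^{\delta/3}\rfloor$ and applying Markov to $|S(k,m)|^w$ \emph{inside} the Cauchy--Schwarz split, so that the gain $(\log T)^{-w\delta/3}$ combines directly with $\mathbb{E}[S^{2w}]^{1/2}$ rather than with a separately bounded $\mathbb{E}[\prod S^{2a_m}]^{1/2}$.
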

\begin{proof}
Let us denote: 
$$\mathcal{S} :=  \sum_{m=0}^{K-1} 
(\lambda_m S(k,m) + \mu_m S(\ell,m)),$$
$$\mathcal{S}_0 := \sum_{m=0}^{K-1} \left( 
\lambda_m S_0(k,m) + \mu_m S_0(\ell,m) \right) $$
and 
$$\mathcal{V} := \sum_{m=0}^{K-1} \left( 
\lambda_m V(k,m) + \mu_m V(\ell,m) \right).$$
We have to compare the exponential moments of $\mathcal{S}_0$ and $\mathcal{V}$. This will be done 
by expanding the exponential series and by comparing   $\mathbb{E}[ \mathcal{S}_0^r]$ and $\mathbb{E}[ \mathcal{V}^r]$ for positive integers $r$. In fact, these moments will both be compared with the corresponding moments of the non-truncated sum $\mathcal{S}$. 
If $M$ is the maximum of the modulii of 
the $\lambda_m$'s and the $\mu_m$'s, we get the following lemma: 
\begin{lemma} \label{lemmaSV}
For all integers $r \geq 1$, we have
$$|\mathbb{E} [\mathcal{S}^r] 
- \mathbb{E} [\mathcal{V}^r]| 
\ll \frac{(2M)^r  e^{3Hr/2}}{T}.$$
\end{lemma}
\begin{proof}
For $p \in \mathcal{P} \cap [3, e^H]$, let us denote $\alpha_p := \kappa \lambda_m$, 
$\beta_p  := \kappa \mu_m$, $\gamma_p 
:= \overline{\kappa} \lambda_m$, $\delta_p 
:= \overline{\kappa} \mu_m$ if
$p \in (e^{e^{m \log H/K}}, 
e^{e^{(m+1) \log H/K}} ]$. 
We have:
$$\mathcal{S} = \frac{1}{2} \sum_{p \in \mathcal{P}
\cap [3, e^H]} p^{-1/2} \left(  p^{ - i(TU - h/2)} (\alpha_p p^{-i kh/H} + \beta_p 
p^{-i \ell h/H} ) + 
p^{i(TU - h/2)} (\gamma_p p^{i kh/H} + \delta_p 
p^{i \ell h/H} )
\right) .$$
Expanding the $r$-th power, we get
\begin{align*}
\mathbb{E} [\mathcal{S}^r] 
& = 2^{-r} 
\sum_{p_1, \dots, p_r \in \mathcal{P} 
\cap [3, e^H]} P^{-1/2} 
\sum_{A \coprod B \coprod C \coprod D 
= \{1,\dots, r\}} \alpha_{P_A} 
P_A^{-i(-h/2 + kh/H)} \dots 
\\ & \dots \times \beta_{P_B} 
P_B^{-i(-h/2 + \ell h/H)}
\gamma_{P_C} P_C^{i(-h/2 + kh/H)}
\delta_{P_D} P_C^{i(-h/2 + \ell h/H)}
\int_0^1 \left(\frac{ P_C P_D }{P_A P_B} 
\right)^{i T u}   du,
\end{align*}
where, for $X \in \{A,B,C,D\}$, 
$$P := \prod_{j=1}^r p_j, 
\; P_X := \prod_{j \in X} p_j,$$
$$\alpha_{P_X} := \prod_{j \in X} 
\alpha_{p_j}, \; \beta_{P_X} := \prod_{j \in X} 
\beta_{p_j},$$
$$\gamma_{P_X} := \prod_{j \in X} 
\gamma_{p_j}, \; \delta_{P_X} := \prod_{j \in X} 
\delta_{p_j}.$$
If we add the terms for which $P_A P_B = P_C P_D$, we get exactly $\mathbb{E}[\mathcal{V}^r]$. 
We then need to bound the other terms. We have
$$\left|\int_{0}^{1} 
\left( \frac{P_C P_D}{P_A P_B} \right)^{iTu} du  \right|
\leq 2 \left( T \left| \log \left(\frac{P_C P_D}{P_A P_B} \right) \right| \right)^{-1} \leq  2 T^{-1} 
\max( P_A P_B, P_C P_D ) \leq \frac{2 P}{T}.$$
Since 
$$|\alpha_{P_A} \beta_{P_B} \alpha_{P_C} 
\beta_{P_D} | \leq M^{|A| + |B| + |C| + |D|} \leq M^r,$$ we deduce
\begin{align*}|\mathbb{E} [\mathcal{S}^r] 
- \mathbb{E} [\mathcal{V}^r]| 
&\leq 2^{-r}
\sum_{p_1, \dots, p_r \in \mathcal{P} \cap 
[3, e^H]} P^{-1/2} (4^r) (M^r) (2P/T)
\\ & \leq  2(2M)^r T^{-1} \sum_{p_1, \dots, p_r \in \mathcal{P} \cap 
[3, e^H]} P^{1/2}
\\ & \leq 2(2M)^r T^{-1} \left( \sum_{p \in \mathcal{P} \cap [3, e^H]} p^{1/2}
\right)^r
\leq \frac{2 (2M)^r  e^{3Hr/2}}{T},
\end{align*}
which proves the lemma. 
\end{proof}
Let us now  compare the moments of $\mathcal{S}$ with the moments of 
$\mathcal{S}_0$.
\begin{lemma}
For all integers $r \geq 1$, 
$$|\mathbb{E} [\mathcal{S}^r] 
- \mathbb{E} [\mathcal{S}_0^r]|  \ll (2M)^r K^{r+1} \exp ( - (\log T)^{\delta/3} ) \left( (r!)^{1/2} 
 (\log T)^{r \delta/12} + \frac{2^r e^{3Hr/2}}{\sqrt{T}} \right). $$
\end{lemma} 
\begin{proof}
We have 
\begin{align*} & |\mathbb{E}[\mathcal{S}^r] 
- \mathbb{E}[\mathcal{S}_0^r] |
 \leq \mathbb{E} [|\mathcal{S}^r - \mathcal{S}_0^r| \mathds{1}_{\mathcal{S} \neq \mathcal{S}_0}] \leq \mathbb{E} [(|\mathcal{S}|^r + |\mathcal{S}_0|^r) \mathds{1}_{\mathcal{S} \neq \mathcal{S}_0}]
\\ & \leq 
\sum_{m = 0}^{K-1} \mathbb{E} \left[(|\mathcal{S}|^r + |\mathcal{S}_0|^r)
(\mathds{1}_{|S(k,m)| \geq (\log T)^{\delta/3}}  + \mathds{1}_{|S(\ell,m)| \geq (\log T)^{\delta/3}} ) \right]
\\ & \leq \sum_{m = 0}^{K-1}\mathbb{E} \left[
\left( \left( M \sum_{m'= 0}^{K-1} (|S(k,m')| + |S
(\ell,m')|) \right)^r + 
\left( M \sum_{m'= 0}^{K-1} (|S_0(k,m')| + |S_0
(\ell,m')|) \right)^r \right) 
\dots \right. \\ & \left. \dots \times 
(\mathds{1}_{|S(k,m)| \geq (\log T)^{\delta/3}}  + \mathds{1}_{|S(\ell,m)| \geq (\log T)^{\delta/3}} ) \right]
\\ & \leq  M^r (2K)^{r-1} 
\sum_{0 \leq m, m' \leq K-1} 
\mathbb{E} \left[ 
(|S(k,m')|^r  + |S(\ell,m')|^r  + 
|S_0(k,m')|^r  + |S_0(\ell,m')|^r) 
\dots \right. \\ & \left. \dots \times 
(\mathds{1}_{|S(k,m)| \geq (\log T)^{\delta/3}}  + \mathds{1}_{|S(\ell,m)| \geq (\log T)^{\delta/3}} ) \right]
\\ & \leq   2 M^r (2K)^{r-1} 
\sum_{0 \leq m, m' \leq K-1} 
\mathbb{E} \left[ 
(|S(k,m')|^r  + |S(\ell,m')|^r) 
(\mathds{1}_{|S(k,m)| \geq (\log T)^{\delta/3}}  + \mathds{1}_{|S(\ell,m)| \geq (\log T)^{\delta/3}} ) \right]
\\ & = 2 M^r (2K)^{r-1} 
\sum_{j, j' \in \{1,2\}} 
\sum_{0 \leq m, m' \leq K-1} 
\mathbb{E} [\mathds{1}_{|S(v_j,m)| \geq 
(\log T)^{\delta/3}}  |S(v_{j'},m')|^r ] 
\end{align*}
where $v_1 := k$, $v_2 := \ell$. 

Hence, for $w \geq 1$ integer, 
\begin{align*}
& |\mathbb{E}[\mathcal{S}^r] 
- \mathbb{E}[\mathcal{S}_0^r] |
 \leq 2 (\log T)^{-w \delta/3} 
 M^r (2K)^{r-1}
 \sum_{j, j' \in \{1,2\}} 
\sum_{0 \leq m, m' \leq K-1} 
\mathbb{E} [ |S(v_j,m)|^w  |S(v_{j'},m')|^r 
] 
\\ &  
\leq 2 (\log T)^{-w\delta/3} 
  M^r (2K)^{r-1}
 \sum_{j, j' \in \{1,2\}} 
\sum_{0 \leq m, m' \leq K-1} 
\mathbb{E} [ (S(v_j,m))^{2w} ]^{1/2}  
\mathbb{E} [ (S(v_{j'},m'))^{2r} ]^{1/2}.
\end{align*}
Now, $\mathbb{E} [(S(k,m))^{2r}]$ corresponds to $\mathbb{E}[\mathcal{S}^{2r}]$ in the case where $\lambda_m = 1$, 
and all the other $\lambda_j$'s and $\mu_j$'s are zero (and then $M = 1$).  The previous lemma then implies: 
$$\mathbb{E} [(S(k,m))^{2r}]
= \mathbb{E} [(V(k,m))^{2r}] + \mathcal{O} 
\left( \frac{2^{2r}e^{3Hr}}{T} \right).$$
where 
\begin{align*}
\mathbb{E} \left[ (V(k,m))^{2r} \right] 
 & \leq \mathbb{E} \left[ 
\left| \sum_{p \in \mathcal{P} \cap 
 (e^{e^{m \log H/K}}, e^{e^{(m+1) \log H/K}}] }  \frac{X_p}{\sqrt{p}} \right|^{2r} 
 \right] 
 \\ & = \sum_{p_1, \dots, p_r, q_1, \dots, q_r \in \mathcal{P} \cap (e^{e^{m \log H/K}}, e^{e^{(m+1) \log H/K}}]} 
 \frac{\mathds{1}_{p_1 \dots p_r = q_1 \dots q_r}}{p_1 \dots p_r} 
 \\ & \leq r! \sum_{p_1, \dots, p_r \in 
 (e^{e^{m \log H/K}}, e^{e^{(m+1) \log H/K}}]} 
 \frac{1}{p_1 \dots p_r} \\ & 
 \leq r!\left( \sum_{p \in [3, e^H], p \in \mathcal{P}} \frac{1}{p}  \right)^r 
 =  r! \left( \log H  + \mathcal{O}(1) 
  \right)^r \leq r! (\log T)^{r\delta/6},
\end{align*}
if $T$ is large enough depending on $\delta$. Note that in the last line, we have used the classical estimate 
$$\sum_{p \leq x, p \in \mathcal{P}} \frac{1}{p} =  \log \log x + \mathcal{O}(1),$$
available for all $x \geq 2$ (it is a consequence of Mertens' second theorem, and it can also be deduced from  the prime number theorem). 
Now, we deduce
\begin{align*}
 &|\mathbb{E} [\mathcal{S}^r] 
- \mathbb{E} [\mathcal{S}_0^r] |
\\ & 
 \ll (\log T)^{-w\delta/3} 
M^r (2K)^{r-1}  K^2 \left( (w!)^{1/2} (\log T)^{w\delta/12} + \frac{2^{w} e^{3Hw/2}}{\sqrt{T}} \right)\left( (r!)^{1/2} (\log T)^{r\delta/12} + \frac{2^{r} e^{3Hr/2}}{\sqrt{T}} \right)
\\ & \ll (2M)^r K^{r+1}
\left( (w!)^{1/2} (\log T)^{-w\delta/4} + \frac{2^{w} e^{3Hw/2}}{\sqrt{T}} \right)
\left( (r!)^{1/2} (\log T)^{r\delta/12} + \frac{2^{r} e^{3Hr/2}}{\sqrt{T}} \right).
\end{align*}

We now take $w = \lfloor (\log T)^{\delta/3} \rfloor$, which implies 
\begin{align*}(w!)^{1/2} (\log T)^{- w \delta/4} 
& \leq [w^{1/2} (\log T)^{- \delta/4} ]^w
 \leq [(\log T)^{-\delta/12} ]^w \\ &  \leq \exp \left( - (\delta/12) (\log 
\log T )\lfloor(\log T)^{\delta/3} \rfloor
\right) 
\\ & \leq \exp ( - (\log T)^{\delta/3} ), 
\end{align*}
whereas 
$$\frac{2^w e^{3Hw/2}}{\sqrt{T}}  \leq 
T^{-1/2}  \exp \left( (\log T)^{\delta/3} 
(\log 2) + 3 (\log T)^{1 - (2\delta/3)}/2  
\right) \leq T^{-0.49},$$
if $T$ is large enough depending on $\delta$. 
Hence, 
$$|\mathbb{E} [\mathcal{S}^r] - \mathbb{E} 
[\mathcal{S}_0^r] |
 \ll (2M)^r K^{r+1} \exp ( - (\log T)^{\delta/3} ) \left( (r!)^{1/2} 
 (\log T)^{r \delta/12} + \frac{2^r e^{3Hr/2}}{\sqrt{T}} \right), $$
which shows the lemma.
\end{proof}
{\it End of the proof of Proposition \ref{TV}:} 
Combining the two lemmas, we get 
\begin{align*}
& |\mathbb{E} [\mathcal{S}_0^r] 
- \mathbb{E} [\mathcal{V}^r] |
\\ & \ll  (2M)^r K^{r+1} \exp ( - (\log T)^{\delta/3} ) \left( (r!)^{1/2} 
 (\log T)^{r \delta/12} + \frac{2^r e^{3Hr/2}}{\sqrt{T}} \right)
 + \frac{(2M)^r e^{3Hr/2}}{T}. 
\end{align*}
Now, under the assumption of the proposition, 
$M \leq (\log T)^{\delta/100}$. 
If $r \leq (\log T)^{\delta/4}$, 
\begin{align*}
& (2M)^r K^{r+1} \exp( - (\log T)^{\delta/3} ) (r!)^{-1/2} (\log T)^{r \delta/12}
\\ & \leq \left(2K^2 (\log T)^{\delta\left( \frac{1}{100} + \frac{1}{12} \right)} \right)^r
 \exp( - (\log T)^{\delta/3} )
 \\ & \leq \exp \left( (\log T )^{\delta/4}
 [ \log ( 2 K^2) + (7 \delta/75) \log 
   \log T] - (\log T )^{\delta/3} \right)
   \\ & \leq \exp \left( - \frac{1}{2}(\log T )^{\delta/3} \right)
 \end{align*} 
 if $T$ is large enough depending on $\delta$ and $K$. 
 If $r \geq (\log T)^{\delta/4}$, 
 \begin{align*}
& (2M)^r K^{r+1} \exp( - (\log T)^{\delta/3} ) (r!)^{-1/2} (\log T)^{r \delta/12}
\\ & \leq \left(2K^2 (\log T)^{\delta\left( \frac{1}{100} + \frac{1}{12} \right)} \right)^r
 \exp( - (\log T)^{\delta/3} )
 (e/r)^{r/2}
 \\ & \leq \left( 2  K^2 (\log T)^{7 \delta/75}
 e^{1/2} (\log T)^{- \delta/8}  \right)^r
 \exp \left( -(\log T )^{\delta/3} \right)
 \\ & \leq \left( 2  K^2
 e^{1/2} (\log T)^{- 19\delta/600}  \right)^r
 \exp \left( -(\log T )^{\delta/3} \right)
 \\ & \leq \exp \left( -(\log T )^{\delta/3} \right)
\end{align*}
for $T$ large enough depending on $\delta$ and $K$. 
Under the same assumptions, 
for $r \leq (\log T)^{\delta/2}$, 
\begin{align*}
& (2M)^r K^{r+1} \exp \left( - (\log T)^{\delta/3} \right)  \frac{2^r e^{3Hr/2}}{ r! \sqrt{T}} 
\\ & \leq  (2M)^r K^{2r} \frac{2^r e^{3Hr/2}}{\sqrt{T}} \\ &  \leq T^{-1/2} 
\exp \left( (\log T)^{\delta/2} 
\log ( 4K^2 (\log T)^{\delta/100} ) 
+ (3/2) (\log T)^{1 - (\delta/2)} ) \right)
\leq T^{-0.49}
\end{align*}
and 
$$ \frac{(2M)^r e^{3Hr/2}}{T r!} 
\leq T^{-1} \exp \left( (\log T)^{\delta/2}
\log (2M) + (3/2) (\log T)^{1 - (\delta/2)}
\right) \leq T^{-0.99}.$$
All these estimates imply that, for 
all $r \leq (\log T)^{\delta/2}$, $T$ large enough depending on $\delta$ and $K$, 
$$ \frac{1}{r!}\left| 
\mathbb{E} [\mathcal{S}_0^r] - 
\mathbb{E} [\mathcal{V}^r] \right|
\ll \exp \left( - \frac{1}{2} (\log T)^{\delta/3} \right).$$
On the other hand, by the truncation,
$$| \mathcal{S}_0| \leq 2 K M (\log T)^{\delta/3} 
\leq 2K (\log T)^{\delta \left( \frac{1}{3} 
+ \frac{1}{100}  \right)} 
\leq (\log T)^{0.35 \delta},$$
for $T$ large enough depending on $\delta$ and $K$, and then for $r \geq (\log T)^{\delta/2}$, 
$$\frac{1}{r!} \mathbb{E} [|\mathcal{S}_0|^r] 
\leq  (e/r)^r (\log T)^{0.35 r \delta}
\leq \left(e (\log T)^{-\delta/2} 
 (\log T)^{0.35 \delta} \right)^r
 \leq e^{-r},$$
 whereas, by using the previous estimate 
 of $\mathbb{E} [(V(k,m))^{2r}]$, 
 \begin{align*}
 \frac{1}{r!} \mathbb{E} [|\mathcal{V}|^r] 
 & = \frac{1}{r!} ||\mathcal{V}||_r^r 
 \leq \frac{1}{r!} 
 \left( M \sum_{m=0}^{K-1} 
 (||V(k,m)||_r + ||V(\ell,m)||_r )\right)^r
 \\ & \leq \frac{1}{r!}
 \left(  M\sum_{m=0}^{K-1} 
 (||V(k,m)||_{2r} + ||V(\ell,m)||_{2r} )\right)^r
 \\& \leq \frac{1}{r!}
 \left( 2 K M  (r!)^{1/2r} (\log T)^{\delta/12} 
 \right)^r 
 \\ & \leq (r!)^{-1/2} 
 (2KM (\log T)^{\delta/12} )^r 
 \\ & \leq (e/r)^{r/2}
  (2KM (\log T)^{\delta/12} )^r 
  \\ & \leq \left(2 e^{1/2} 
  ( \log T)^{-\delta/4}
  K (\log T)^{\delta/100}
  (\log T)^{\delta/12} \right)^r
  \\ & \leq e^{-r}.   
 \end{align*}
Expanding the exponential, we deduce 
\begin{align*}
& \left|\mathbb{E} [\exp (\mathcal{S}_0)] 
- \mathbb{E} [\exp (\mathcal{V})] 
\right| 
\leq \sum_{r \geq 1} 
\frac{1}{r!} \left| \mathbb{E} [\mathcal{S}_0^r] - \mathbb{E} [\mathcal{V}^r]
\right|
\\ & \ll \sum_{1 \leq r \leq (\log T)^{\delta/2} } \exp \left( - \frac{1}{2} 
(\log T)^{\delta/3} \right) 
+ \sum_{r > (\log T)^{\delta/2}} 
e^{-r}
\\ & \ll (\log T)^{\delta/2} 
\exp \left( - \frac{1}{2} 
(\log T)^{\delta/3} \right) 
+ e^{-  (\log T)^{\delta/2}} 
\\ & \ll \exp \left( - 
(\log T)^{\delta/4} \right),
\end{align*}
when $T$ is large enough depending on $\delta$ and $K$. 
\end{proof} 
The next step consists in comparing
the family $(V(k,m))_{k \in \{0, \dots, 
H-1\}, m \in \{0,1, \dots, K-1\}}$
 with a Gaussian family with a close  covariance structure. 
We have 
$$V(k,m) = \Re \sum_{n \in \mathbb{N} 
\cap (e^{e^{m \log H/K}}, e^{e^{(m+1) \log 
H/K}}] } Y_n n^{-ikh/H},$$
where 
$$Y_n :=
\kappa X_n n^{-1/2 + ih/2}  \mathds{1}_{n \in \mathcal{P}} .$$
 The covariance matrix of $Y_n$, seen as the  two-dimensional vector
$(\Re(Y_n), \Im(Y_n))$, is 
 $0$ if $n$ is not prime  and 
 $I_2/(2n)$ if $n$ is prime, $I_2$ being the two-dimensional identity matrix. If we "average the variance" with the prime number theorem, we get $I_2/(2 n \log n)$. Moreover, if we replace the sum by an integral, this leads us to compare $V(k,m)$ with the Gaussian variable: 
 $$G(k,m) := \Re \int_{e^{e^{m \log H/K}}}^{e^{e^{(m+1) \log H/K}}} \frac{t^{-ikh/H}}
 { \sqrt{2 t \log t}} d W_t,$$
 where $W_t := W^{(1)}_t + i  W^{(2)}_t$, 
 $W^{(1)}$ and $W^{(2)}$ being two independent standard Brownian motions. 
 We have the following result: 
 \begin{proposition} \label{VG}
For all $\lambda_1, \dots, \lambda_{K-1}$, 
$\mu_1, \dots, \mu_{K-1}$, complex with modulus at most $(\log T)^{1/20 K}$, and for all $k, \ell \in \{0,1, \dots,H-1\}$, we have 
\begin{align*}\mathbb{E} 
\left[ \exp \left( \sum_{m=1}^{K-1} 
(\lambda_m V(k,m) + \mu_m V(\ell,m)) \right)
\right] & = \mathbb{E} 
\left[ \exp \left( \sum_{m=1}^{K-1} 
(\lambda_m G(k,m) + \mu_m G(\ell,m)) \right)
\right]\\ &  + \mathcal{O}_h 
\left( \exp (-(\log T)^{1/10K}) \right),
\end{align*}
if $T$ is large enough depending only on $\delta$ and $K$. 
 \end{proposition}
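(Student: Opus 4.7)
My plan is to use two layers of independence. First, since the index ranges $(a_m, b_m]$ (with $a_m := e^{e^{m \log H/K}}$ and $b_m := e^{e^{(m+1) \log H/K}}$) are pairwise disjoint, the pairs $(V(k,m), V(\ell,m))$ are independent across $m$; the pairs $(G(k,m), G(\ell,m))$ are also independent across $m$ because the defining Itô integrals are over disjoint intervals and $W$ has independent increments. Both Laplace transforms therefore factor into $K-1$ pieces, and a telescoping argument (bounding each factor by $\exp(O_h(M^2))$ with $M := \max_m (|\lambda_m| + |\mu_m|) \leq 2(\log T)^{1/20K}$) reduces the statement to a super-polynomial bound on the $m$-th factor for each fixed $m \in \{1, \ldots, K-1\}$. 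The exclusion of $m = 0$ is essential: that block contains primes as small as $2, 3, 5, \ldots$, for which the prime-by-prime expansion below would not be a small perturbation.

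For a fixed block $m \geq 1$, I write $\lambda_m V(k,m) + \mu_m V(\ell,m) = \sum_{p \in \mathcal{P} \cap (a_m, b_m]}(\alpha_p X_p + \beta_p \bar X_p)$ with
\[
\alpha_p = \tfrac{1}{2}\kappa\, p^{-1/2+ih/2}(\lambda_m p^{-ikh/H} + \mu_m p^{-i\ell h/H}),\qquad \beta_p = \tfrac{1}{2}\bar\kappa\, p^{-1/2-ih/2}(\lambda_m p^{ikh/H} + \mu_m p^{i\ell h/H}).
\]
Independence of the $X_p$ yields $\mathbb{E}[\exp(\cdot)] = \prod_p \mathbb{E}[\exp(\alpha_p X_p + \beta_p \bar X_p)]$, and the rotational invariance $\mathbb{E}[X_p^n \bar X_p^{n'}] = \delta_{n,n'}$ gives
\[
\mathbb{E}[\exp(\alpha_p X_p + \beta_p \bar X_p)] = \sum_{n \geq 0} \frac{(\alpha_p \beta_p)^n}{(n!)^2} = 1 + \alpha_p \beta_p + O(|\alpha_p \beta_p|^2).
\]
Since $p > a_m \geq e^{H^{1/K}}$ forces $|\alpha_p \beta_p| \ll M^2/p$, which is super-polynomially small in $\log T$, the logarithm is well-defined and
\[
\log \mathbb{E}[\exp(\lambda_m V(k,m) + \mu_m V(\ell,m))] = \sum_{p \in (a_m, b_m]} \alpha_p \beta_p + O\Big(M^4 \sum_{p > e^{H^{1/K}}} p^{-2}\Big),
\]
the error being $\ll (\log T)^{1/5K} \, e^{-H^{1/K}}$.

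A direct expansion gives $\sum_p \alpha_p \beta_p = \tfrac{\lambda_m^2 + \mu_m^2}{4} \sum_{p \in (a_m, b_m]} p^{-1} + \tfrac{\lambda_m \mu_m}{2} \Re \sum_{p \in (a_m, b_m]} p^{i(\ell - k) h/H - 1}$, while an Itô computation using $(dW_t)^2 = 0$ and $dW_t\, d\bar W_t = 2\, dt$ shows
\[
\log \mathbb{E}[\exp(\lambda_m G(k,m) + \mu_m G(\ell,m))] = \tfrac{\lambda_m^2 + \mu_m^2}{4} \int_{a_m}^{b_m} \tfrac{dt}{t \log t} + \tfrac{\lambda_m \mu_m}{2} \Re \int_{a_m}^{b_m} \tfrac{t^{i(\ell - k) h/H}}{t \log t}\, dt.
\]
The comparison thus reduces to approximating the prime sums $\sum_{p \in (a_m, b_m]} p^{is - 1}$ by the smooth integrals $\int_{a_m}^{b_m} t^{is - 1}/\log t\, dt$ for $s \in \{0, (\ell - k) h/H\}$. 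I will carry this out via Abel summation against $\pi(t) = \mathrm{li}(t) + O(t e^{-c\sqrt{\log t}})$, the classical PNT error. Since $a_m \geq e^{H^{1/K}}$ with $H^{1/K} \asymp (\log T)^{(1 - \delta)/K}$, the resulting discrepancy is $O(\exp(-c (\log T)^{(1 - \delta)/(2K)}))$, which beats the target threshold $\exp(-(\log T)^{1/(10K)})$ since $\delta < 1/2 < 4/5$. Combining everything via $|e^A - e^B| \leq |A - B| e^{\max(|A|, |B|)}$, with $|A|, |B| \ll M^2 \leq (\log T)^{1/(10K)}$, then produces the stated error.

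The main obstacle is the PNT-driven comparison for the oscillatory sum: one needs sub-exponential decay of $\sum_{p \in (a_m, b_m]} p^{is - 1} - \int_{a_m}^{b_m} t^{is - 1}/\log t\, dt$, uniformly in the real phase $s$ with $|s| \leq h$. This uniformity will follow because $t^{is}$ varies slowly on the relevant logarithmic scale, so that integration by parts against the de la Vallée Poussin bound on $\pi(t) - \mathrm{li}(t)$ transfers the decay rate to the oscillatory sum without any loss in $s$ as long as $|s|$ is bounded.
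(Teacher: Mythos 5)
Your proposal is correct and takes essentially the same route as the paper's proof: factor the Laplace transform prime by prime using independence and rotation invariance of the $X_p$, expand the per-prime factor (your exact Bessel series $\sum_n (\alpha_p\beta_p)^n/(n!)^2$ is a slightly cleaner packaging of the paper's truncated Taylor expansion with the a priori bound $|\alpha_p X_p + \beta_p \bar X_p| \ll p^{-0.49}$), take logarithms, compare the resulting prime sums of the form $\sum_p p^{-1}\cos(\theta\log p)$ with the corresponding integrals via Abel summation against a PNT error term (you use $\pi(t)-\mathrm{li}(t)$, the paper uses the Chebyshev-type quantity $\rho(t)-t$; both work), and re-exponentiate. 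Your identification of the quadratic form $\sum_p\alpha_p\beta_p$ and the Itô covariance computation for the $G(k,m)$ both match the paper exactly. One minor bookkeeping imprecision: you write $|A|, |B| \ll M^2 \leq (\log T)^{1/(10K)}$, but the log-Laplace transforms in fact carry an extra factor of $\log H/K$ per block (so $|A|, |B| \ll M^2 \log H \ll (\log T)^{1/(10K)} \log\log T$); the paper bounds $|\mathcal{A}| \leq (\log T)^{1/10K}\log\log T$ accordingly. This does not break the argument — the per-block error $\exp(-c(\log T)^{(1-\delta)/(2K)})$ still dominates, exactly as in the paper's final estimate $K\exp((\log T)^{1/10K}\log\log T - (\log T)^{1/5K}) \ll \exp(-(\log T)^{1/10K})$ — but it is worth being explicit about.
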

 \begin{rmk}
 The proposition is not true if we add 
 terms involving $V(k,0)$ and $V(\ell,0)$, since  the small primes give variables which are not close to Gaussian ones. 
This will not be a major issue in the proof of our main result, but it may create some difficulties if we want to get finer aymptotics on the maximum of $\log \zeta$. 
The sums involving  small primes will be considered separately, at the end of this paper. 
 \end{rmk}
This proposition is very similar to results given by Arguin, Belius and Harper in \cite{ABH15}: see for example  Proposition 
2.4 of that paper for a comparison between variables similar to $V(k,m)$ and Gaussian variables with the same covariance structure, and Lemma 2.1 of \cite{ABH15} for an approximation of this covariance structure. However, the quantities involved in \cite{ABH15} and here are not exactly the same, and the covariance of the variables $G(k,m)$ has not an exact branching structure, contrarily to the approximation given in Lemma 2.1 of \cite{ABH15}. Moreover, in the present situation, we need to allow the parameters $\lambda_m$ and $\mu_m$ to grow with $T$, whereas the constant $C$ is fixed in 
proposition 2.4 of \cite{ABH15}. For these reasons and for sake of completeness, we provide a full proof of Proposition 
\ref{VG} here, even if the main arguments involved are essentially the same as in \cite{ABH15}. 
 \begin{proof}
 We have, using the independence of the 
 $Y_n$'s: 
 \begin{align*}
 &\mathbb{E} \left[ \exp \left( \lambda_m
 V(k,m) + \mu_m V(\ell,m) \right) \right]
 \\ & = \prod_{n \in \mathbb{N} 
 \cap (e^{e^{m \log H/K}}, e^{e^{(m+1) \log 
 H/K}}]} \mathbb{E} [e^{
\lambda_m  \Re (Y_n
 n^{-ikh/H})  + \mu_m  \Re (Y_n
 n^{-i \ell h/H})  }].
 \end{align*}
Now, we have, from the fact that 
$Y_n$ is rotationally invariant: 
$$\mathbb{E} [\lambda_m  \Re (Y_n
 n^{-ikh/H})  + \mu_m  \Re (Y_n
 n^{-i \ell h/H})  ] = 0,$$
 \begin{align*}
 &\mathbb{E} \left[\left(\lambda_m  \Re (Y_n
 n^{-ikh/H})  + \mu_m  \Re (Y_n
 n^{-i \ell h/H}) \right)^2 \right] 
\\ & = \frac{1}{4} \mathbb{E} \left[\left(Y_n
 (\lambda_m n^{-ikh/H}  +\mu_m n^{-i \ell h/H}) + \overline{Y_n}
  (\lambda_m n^{ikh/H}  +\mu_m n^{i \ell h/H}) \right)^2 \right]
  \\ & =  \frac{1}{2}  
  (\lambda_m n^{-ikh/H}  +\mu_m n^{-i \ell h/H}) (\lambda_m n^{ikh/H}  +\mu_m n^{i \ell h/H}) \mathbb{E}[|Y_n|^2] 
  \\ & = \frac{\mathds{1}_{n \in \mathcal{P}}}{2n} 
  (\lambda_m n^{-ikh/H}  +\mu_m n^{-i \ell h/H}) (\lambda_m n^{ikh/H}  +\mu_m n^{i \ell h/H})
  \\ & =  \frac{\mathds{1}_{n \in \mathcal{P}}}{2n} \left( \lambda_m^2 
  + \mu_m^2 + 2 \lambda_m \mu_m 
  \cos ((k-\ell) ( \log n) h /H) \right).
 \end{align*}
Moreover, for $m \geq 1$, we have
$$n \geq e^{e^{K^{-1}\log H}},$$
which implies 
\begin{align*}  \left|\lambda_m  \Re (Y_n
 n^{-ikh/H})  + \mu_m  \Re (Y_n
 n^{-i \ell h/H}) \right|
& \leq \frac{|\lambda_m| + |\mu_m|}{\sqrt{n}} \leq 2  (\log T)^{1/20K}
n^{-0.49} 
 e^{- \frac{1}{100} e^{K^{-1}\log H }}
 \\ & \ll  n^{-0.49} (\log T)^{1/20K}
  e^{- \frac{1}{100} (\log T)^{(1-\delta)/K}}
  \\ & \leq n^{-0.49}, 
 \end{align*}
 for $T$ large enough depending on $\delta$ and $K$.
 We deduce, by expanding the exponential: 
 \begin{align*}
 & \mathbb{E} [e^{
\lambda_m  \Re (Y_n
 n^{-ikh/H})  + \mu_m  \Re (Y_n
 n^{-i \ell h/H})  }]
\\&  = 1 + \frac{\mathds{1}_{n \in \mathcal{P}}}{4n} \left( \lambda_m^2 
  + \mu_m^2 + 2 \lambda_m \mu_m 
  \cos ((k-\ell) ( \log n) h /H) \right)
  + \mathcal{O} \left( \sum_{r \geq 3}
  \frac{n^{-0.49 r}}{r!} \right)
  \\ & = 1 + \frac{\mathds{1}_{n \in \mathcal{P}}}{4n} \left( \lambda_m^2 
  + \mu_m^2 + 2 \lambda_m \mu_m 
  \cos ((k-\ell) ( \log n) h /H) \right)
  + \mathcal{O} \left( n^{-1.47} \right)
 \end{align*}
 For $T$ large enough depending on $\delta$ and $K$, the second term of the last formula is 
 smaller than $n^{-0.98}$, which allows to take the logarithm and to use the estimate 
 $\log(1+x) = x + \mathcal{O}(x^2)$, available for $|x| \leq 1/2$: 
 \begin{align*}
 & \log \mathbb{E} [e^{
\lambda_m  \Re (Y_n
 n^{-ikh/H})  + \mu_m  \Re (Y_n
 n^{-i \ell h/H})  }]
\\ &  = \frac{\mathds{1}_{n \in \mathcal{P}}}{4n} \left( \lambda_m^2 
  + \mu_m^2 + 2 \lambda_m \mu_m 
  \cos ((k-\ell) ( \log n) h /H) \right)
  + \mathcal{O} \left( n^{-1.47} \right),
  \end{align*}
  and then 
   \begin{align*}
 &\mathbb{E} \left[ \exp \left( \lambda_m
 V(k,m) + \mu_m V(\ell,m) \right) \right]
 \\ & = \exp 
 \left( E +  \sum_{p \in \mathcal{P} \cap (e^{e^{m \log H/K}}, e^{e^{(m+1) \log 
 H/K}}]} 
 \frac{1}{4p} \left( \lambda_m^2 
  + \mu_m^2 + 2 \lambda_m \mu_m 
  \cos ((k-\ell) ( \log p) h /H) \right)
  \right),
 \end{align*}
 where 
   \begin{align*}
   |E|  &\ll \sum_{n \geq e^{e^{K^{-1} \log 
 H}}} n^{-1.47} 
 \ll \exp \left( - 0.47 H^{1/K} \right)
 \ll \exp \left( - 0.47(\log T)^{
 (1-\delta)/K} \right)
 \\ & \ll \exp \left( - (\log T)^{1/2K} \right).
 \end{align*}
 (recall that $\delta \in (0,1/2)$). 
 Hence, 
 \begin{align*}
 &\mathbb{E} \left[ \exp \left( \lambda_m
 V(k,m) + \mu_m V(\ell,m) \right) \right]
 \\ & = \exp 
 \left( \left( \Sigma (e^{e^{(m+1) \log H/K}})
 - \Sigma (e^{e^{m \log H/K}}) \right) 
 \left( \frac{\lambda_m^2 + \mu_m^2}{4}
 \right) \dots 
 \right. \\ & \left. \dots  + \left(\Sigma (e^{e^{(m+1) \log H/K}}, (k-\ell)h/H)
 - \Sigma (e^{e^{m \log H/K}}, (k-\ell)h/H)
 \right) \left( \frac{\lambda_m \mu_m}{2}
 \right) \dots \right. \\ & \left. \dots + \mathcal{O}
 \left( \exp \left( - (\log T)^{1/2K} \right) \right) \right),
  \end{align*}
 where 
 $$\Sigma(x) := \sum_{p \in \mathcal{P}
 \cap [2,x]} \frac{1}{p}, \; \;  \Sigma(x, \theta) := \sum_{p \in \mathcal{P}
 \cap [2,x]} \frac{\cos(\theta \log p)}{p}.$$
 For $2 \leq  x < y$, let us write 
 $$I(x,y, \theta) := 
 \int_{x}^y \frac{\cos( \theta \log t)}{t \log t} dt,$$
 $$I(x,y) := I(x,y,0) = \log \log y - \log \log x.$$
 Integrating by parts, we get: 
 \begin{align*}
 & \Sigma(y,\theta) - \Sigma(x,\theta) 
 = \int_{(x,y]} \frac{\partial}{\partial t} 
 I(x,t,\theta) d \rho(t) 
 \\ & = I(x, y,  \theta) 
 +  \int_{(x,y]} \frac{\partial}{\partial t} 
 I(x,t,\theta) d (\rho(t) - t) 
 \\ & = I(x, y,  \theta) 
 + \left[ \frac{\partial}{\partial t} 
 I(x,t,\theta) (\rho (t) - t) \right]_{x+}^{y+} - \int_{x}^{y} 
 \frac{\partial^2}{\partial t^2} 
 I(x,t,\theta) (\rho (t) - t) dt,
 \end{align*}
 where, for $t \in (x,y]$,  $$\rho(t) := \sum_{p \in \mathcal{P}
 \cap [2,t]} \log p = 
 t +  \mathcal{O} \left( t \exp \left( 
 - a \sqrt{\log t} \right) \right)
 = t +  \mathcal{O} \left( t \exp \left( 
 - a \sqrt{\log x} \right) \right)
 ,$$
 by a classical refinement of the prime number theorem due to de la Vall\'ee Poussin ($a > 0 $ is a universal constant). 
 Note that since we assume Riemann hypothesis, we could have used a better estimate of $\rho(t)$, but this is not needed for our computations. 
 Now, we have 
 $$\left|\frac{\partial}{\partial t} 
 I(x,t,\theta)\right|
 \leq \frac{1}{t \log t} \ll \frac{1}{t}$$ 
 and 
$$
 \left|\frac{\partial^2}{\partial t^2} 
 I(x,t,\theta)\right|
  = \left| \frac{- (\theta/t) \sin(\theta \log t)}{t \log t} - \frac{(1 + \log t)\cos(\theta \log t)}{t^2 \log^2 t} \right|
 \ll \frac{(1+ \theta)}{t^2 \log t}.$$
 We then get, by estimating the bracket and the last integral, 
 $$\Sigma(y, \theta) - \Sigma(x,\theta)
  =  I(x,y,\theta) + \mathcal{O} 
  \left( (1+ \theta) (1+ \log \log y - \log \log x) \exp \left( - a \sqrt{\log x} \right)  \right).$$
  For $x = e^{e^{m \log H/K}}$, 
  $y = e^{e^{(m+1) \log H/K}}$, 
  $\theta = 0$ or $\theta = 
  (k-\ell)h/H \in [-h,h]$, the last error term is 
\begin{align*}
& \ll_h \left(1 + \frac{ \log H}{K}
  \right) \exp \left( - a 
 H^{1/2K} \right)
 \\ & \ll ( 1+ \log \log T) 
 \exp ( - a (\log T)^{(1-\delta)/2K} )
 \\ & \ll \exp (- (\log T)^{1/4K} ),
 \end{align*}
 if $T$ is large enough depending on $\delta$ and $K$. 
 Multiplying these error terms by 
 $(\lambda_m^2 + \mu_m^2)/4$ and 
 $\lambda_m \mu_m/2$ respectively, which are, in 
 absolute value,  
 at most $(\log T)^{1/10K}$, and 
 adding them, we get something which is 
 $$ \ll_h  (\log T)^{1/10K} \exp (- (\log T)^{1/4K} ) \ll  \exp (- (\log T)^{1/5K} ) $$
for $T$ large enough depending on 
$\delta$ and $K$. 
Hence, we deduce
 \begin{align*}
 &\mathbb{E} \left[ \exp \left( \lambda_m
 V(k,m) + \mu_m V(\ell,m) \right) \right]
 \\ & = \exp 
 \left(  I (e^{e^{m \log H/K}}, 
 e^{e^{(m+1) \log H/K}}) 
 \left( \frac{\lambda_m^2 + \mu_m^2}{4}
 \right) \dots 
 \right. \\ & \left. \dots  +  I (e^{e^{m \log H/K}}, 
 e^{e^{(m+1) \log H/K}}, (k-\ell)h/H)
 \left( \frac{\lambda_m \mu_m}{2}
 \right) \dots \right. \\ & \left. \dots + \mathcal{O}_h
 \left( \exp \left( - (\log T)^{1/5K} \right) \right) \right).
  \end{align*}
  Using the independence of the variables 
  $V(k,m)$ for different values of $m$ gives the following:
$$\mathbb{E} 
\left[ \exp \left( \sum_{m=1}^{K-1} 
(\lambda_m V(k,m) + \mu_m V(\ell,m)) \right)
\right] = \exp \left( \mathcal{A} 
+ \mathcal{O}_h \left( K \exp 
( - (\log T)^{1/5K} ) \right) \right),$$
  where 
  \begin{align*}
  \mathcal{A} 
  & = \sum_{m=1}^{K-1} 
   I (e^{e^{m \log H/K}}, 
 e^{e^{(m+1) \log H/K}}) 
 \left( \frac{\lambda_m^2 + \mu_m^2}{4}
 \right)
 \\ & + \sum_{m=1}^{K-1}   I (e^{e^{m \log H/K}}, 
 e^{e^{(m+1) \log H/K}}, (k-\ell)h/H)
 \left( \frac{\lambda_m \mu_m}{2}
 \right).
  \end{align*}
  Since $|I(x,y,\theta)| \leq \log \log y 
  - \log \log x$ and $\lambda_m, \mu_m 
  \leq (\log T )^{1/20K}$, 
  we can bound $\mathcal{A}$ by a telescopic sum which gives
  $$|\mathcal{A}| \leq 
    (\log T )^{1/10K} \log H
    \leq  (\log T )^{1/10K} \log \log T,$$
    and then 
   \begin{align*}
  &  \mathbb{E} 
\left[ \exp \left( \sum_{m=1}^{K-1} 
(\lambda_m V(k,m) + \mu_m V(\ell,m)) \right)
\right] = \exp (\mathcal{A}) 
\left( 1 + \mathcal{O}_h \left( K 
\exp (-( \log T)^{1/5K}) \right) \right)
\\ &  = \exp(\mathcal{A})
 + \mathcal{O}_h \left( K \exp \left((\log T)^{1/10K} \log \log T - 
 (\log T)^{1/5K}  \right)\right) 
 \\ & =  \exp(\mathcal{A})
 + \mathcal{O}_h \left( 
\exp (-( \log T)^{1/10K}) \right),
   \end{align*}
    for $T$ large 
  enough depending on $K$ and $\delta$.
  This completes the proof of the proposition, provided that we check that 
   $$\mathbb{E} 
\left[ \exp \left( \sum_{m=1}^{K-1} 
(\lambda_m G(k,m) + \mu_m G(\ell,m)) \right)
\right] = \exp \left( \mathcal{A} \right).$$
By independence of $G(k,m)$ for different values of $m$, it is sufficient to check 
\begin{align*}
&\mathbb{E} 
\left[ \exp \left(  
\lambda_m G(k,m) + \mu_m G(\ell,m)\right) 
\right]
\\ & = \exp \left(  I (e^{e^{m \log H/K}}, 
 e^{e^{(m+1) \log H/K}}) 
 \left( \frac{\lambda_m^2 + \mu_m^2}{4}
 \right)
\dots \right. \\ &  \left. \dots +    I (e^{e^{m \log H/K}}, 
 e^{e^{(m+1) \log H/K}}, (k-\ell)h/H)
 \left( \frac{\lambda_m \mu_m}{2}
 \right) \right).
 \end{align*}
 Since $(G(k,m), G(\ell,m))$ is a centered Gaussian vector, it is sufficient to check that its covariance structure is given by 
 $$\mathbb{E} [(G(k,m))^2]
 =  \mathbb{E} [(G(\ell,m))^2]
 = \frac{1}{2} I (e^{e^{m \log H/K}}, 
 e^{e^{(m+1) \log H/K}}),$$ 
 $$\mathbb{E} [G(k,m) G(\ell,m)] 
 = \frac{1}{2} I (e^{e^{m \log H/K}}, 
 e^{e^{(m+1) \log H/K}},(k-\ell) h/H ).$$
 By including the case $k = \ell$, it is enough to check the last equality, which is proven as follows, using the fact that (formally)
 $\mathbb{E} [(d W_t)^2] = 0$ and 
 $\mathbb{E} [d W_t \overline{d W_t}] = 2 dt$,  
 \begin{align*}
 & \mathbb{E} [G(k,m) G(\ell,m)] 
 \\ & = \mathbb{E} 
 \left[ \left(\Re \int_{e^{e^{m \log H/K}}}^{e^{e^{(m+1) \log H/K}}} \frac{t^{-ikh/H}}{\sqrt{2 t \log t}} 
 d W_t \right) \left(\Re \int_{e^{e^{m \log H/K}}}^{e^{e^{(m+1) \log H/K}}} \frac{t^{-i \ell h/H}}{\sqrt{2 t \log t}} 
 d W_t \right)
  \right] 
  \\  & \frac{1}{4} 
  \left[ \left( \int_{e^{e^{m \log H/K}}}^{e^{e^{(m+1) \log H/K}}} \left(\frac{t^{-ikh/H}}{\sqrt{2 t \log t}} 
 d W_t  + \frac{t^{ikh/H}}{\sqrt{2 t \log t}} 
  \overline{ dW_t} \right) \right)
  \dots \right. \\ & \left. \dots  \times   \left( \int_{e^{e^{m \log H/K}}}^{e^{e^{(m+1) \log H/K}}} \left(\frac{t^{-i\ell h/H}}{\sqrt{2 t \log t}} 
 d W_t  + \frac{t^{i\ell h/H}}{\sqrt{2 t \log t}} 
  \overline{ dW_t} \right) \right)
  \right]   
  \\ & = \frac{1}{4} 
   \int_{e^{e^{m \log H/K}}}^{e^{e^{(m+1) \log H/K}}} \frac{2 t^{-ikh/H} t^{i \ell h/H} + 2 t^{ikh/H} t^{-i \ell h/H}}{2t \log 
   t}   dt
   \\ & = \frac{1}{4} 
    \int_{e^{e^{m \log H/K}}}^{e^{e^{(m+1) \log H/K}}} \frac{ 4 \cos ((k-\ell) h (\log t) /H)}{2 t \log t} dt
    \\ & = \frac{1}{2} I (e^{e^{m \log H/K}}, 
 e^{e^{(m+1) \log H/K}},(k-\ell) h/H ).
 \end{align*}
  \end{proof}
Combining Propositions \ref{TV} and \ref{VG}, we obtain that the Fourier transforms of the variables $S_0(k,m)$ and the Gaussian variables 
$G(k,m)$ are close to each other.
By using Fourier inversion, we deduce the following, which is the main result of the section:  
\begin{proposition} \label{0.14}
Let $\varphi$ be a smooth function with compact support, $k, \ell \in \{0, \dots, H-1\}$, $x \in \mathbb{R}$, and $A > 0$. Then, with the notation of the previous propositions: 
$$\mathbb{E}
\left[ \prod_{m=1}^{K-1} \varphi 
(S_0(k,m) - x)  \right]
 =\mathbb{E}
\left[ \prod_{m=1}^{K-1} \varphi 
(G(k,m) - x)  \right]
 + \mathcal{O}_{\varphi, K, \delta, h, A} 
( (\log T)^{-A})$$
 and 
\begin{align*}
\mathbb{E}
\left[ \prod_{m=1}^{K-1} \varphi 
(S_0(k,m) - x) \varphi 
(S_0(\ell,m) - x) \right]
& =\mathbb{E}
\left[ \prod_{m=1}^{K-1} \varphi 
(G(k,m) - x) \varphi 
(G(\ell,m) - x) \right]
\\ & + \mathcal{O}_{\varphi, K, \delta, h, A} 
( (\log T)^{-A})
\end{align*}
Note that these estimates hold uniformly in $x$. 
\end{proposition}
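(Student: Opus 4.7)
The plan is to deduce both identities from Propositions \ref{TV} and \ref{VG} by Fourier inversion, in the style of the classical characteristic-function method. Since $\varphi$ is smooth and compactly supported, its Fourier transform $\widehat{\varphi}$ is a Schwartz function: for every integer $N\geq 1$ one has $|\widehat{\varphi}(\lambda)| \ll_{\varphi,N}(1+|\lambda|)^{-N}$. Writing $\varphi(y-x) = (2\pi)^{-1}\int_{\mathbb{R}} \widehat{\varphi}(\lambda) e^{i\lambda(y-x)}\, d\lambda$, taking the $(K-1)$-fold product and exchanging sum and integral gives
$$\mathbb{E}\Bigl[\prod_{m=1}^{K-1} \varphi(S_0(k,m)-x)\Bigr] = \frac{1}{(2\pi)^{K-1}} \int_{\mathbb{R}^{K-1}} \Bigl(\prod_{m=1}^{K-1} \widehat{\varphi}(\lambda_m)\, e^{-i\lambda_m x}\Bigr) \Phi_{S_0}(\lambda)\, d\lambda,$$
where $\Phi_{S_0}(\lambda) := \mathbb{E}[\exp(i\sum_m \lambda_m S_0(k,m))]$; the same identity with $G$ in place of $S_0$ defines $\Phi_G$.

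I then set $R := \min((\log T)^{\delta/100}, (\log T)^{1/20K})$ and split the domain of integration into the box $\{\max_m |\lambda_m| \leq R\}$ and its complement. On the complement, the Schwartz decay of $\widehat{\varphi}$ alone bounds the contribution by $\ll_{\varphi,K,N} R^{-N+K-1}$, which is $\ll (\log T)^{-A}$ once $N$ is chosen large enough depending on $A$, $K$ and $\delta$; the same estimate holds for the Gaussian analogue. On the compact region I apply Proposition \ref{TV} with $\mu_m = 0$ and arguments $i\lambda_m$ of modulus $\leq R \leq (\log T)^{\delta/100}$, which identifies $\Phi_{S_0}$ with the characteristic function $\Phi_V$ of $(V(k,m))_m$ up to an additive error $\mathcal{O}(\exp(-(\log T)^{\delta/4}))$; then Proposition \ref{VG}, applicable because $R \leq (\log T)^{1/20K}$, replaces $\Phi_V$ by $\Phi_G$ up to a further error $\mathcal{O}_h(\exp(-(\log T)^{1/10K}))$. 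The cumulative error is multiplied by the uniformly bounded factor $\prod_m \widehat{\varphi}(\lambda_m) e^{-i\lambda_m x}$ and integrated over a region of volume $(2R)^{K-1} = (\log T)^{\mathcal{O}_{K,\delta}(1)}$, and the result is dominated by $\exp(-(\log T)^{c(K,\delta)})$ for some $c(K,\delta) > 0$, hence is $o((\log T)^{-A})$ for every fixed $A$. Reassembling the Gaussian side by Fourier inversion yields the first claim, uniformly in $x$.

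The two-point statement is handled in exactly the same way with $2(K-1)$ Fourier variables $(\lambda_1,\dots,\lambda_{K-1},\mu_1,\dots,\mu_{K-1})$, applying the full statements of Propositions \ref{TV} and \ref{VG} with both families of parameters active. The only point that requires some care is ensuring that the three scales match up: the admissibility radii $(\log T)^{\delta/100}$ and $(\log T)^{1/20K}$ coming from the comparison propositions, the truncation radius $R$, and the polynomial volume $(2R)^{2(K-1)}$ of the integration domain must all be dominated by the super-polynomial errors $\exp(-(\log T)^{\delta/4})$ and $\exp(-(\log T)^{1/10K})$. Since the latter beat any power of $\log T$, this is a purely bookkeeping matter rather than a genuine obstacle, and the main analytic work has already been done in the previous two propositions.
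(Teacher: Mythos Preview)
Your argument is correct and follows the same Fourier-inversion strategy as the paper: write the product of $\varphi$'s via the (inverse) Fourier transform, split the integration domain according to whether all frequencies are small, use Propositions~\ref{TV} and~\ref{VG} on the small-frequency box, and the Schwartz decay of $\widehat{\varphi}$ on the complement. The only cosmetic differences are your choice of truncation radius $R=\min((\log T)^{\delta/100},(\log T)^{1/20K})$ versus the paper's $(\log T)^{\delta/50K}$, and your keeping the two super-polynomial errors separate rather than merging them into a single $\exp(-(\log T)^{\delta/5K})$; neither affects the argument.
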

\begin{proof}
Let us prove the second estimate: the proof of the first 
one is exactly similar. 
Let $\eta$ be the inverse Fourier transform of $\varphi$: from the assumptions of $\varphi$, $\eta$ is bounded and dominated by any negative power at infinity. We have 
$$\varphi 
(S_0(k,m) - x) = \int_{-\infty}^{\infty} 
\eta(\lambda) e^{-i\lambda (S_0(k,m) - x)} d \lambda$$
and then
\begin{align*}
&\prod_{m=1}^{K-1} \varphi 
(S_0(k,m) - x) \varphi 
(S_0(\ell,m) - x)  
 \\ & = \int_{\mathbb{R}^{2K-2} }
 \prod_{m=1}^{K-1} \eta(\lambda_m) \eta(\mu_m) e^{ ix\sum_{m=1}^{K-1} 
( \lambda_m + \mu_m) } 
e^{- i \sum_{m=1}^{K-1} (\lambda_m S_0(k,m)
+ \mu_m S_0(\ell,m))} \prod_{m=1}^{K-1} d \lambda_m d \mu_m, 
\end{align*}
and a similar formula for the variables $G(k,m)$, $G(\ell,m)$. 
Therefore, 
\begin{align*}
&\left|\mathbb{E}
\left[ \prod_{m=1}^{K-1} \varphi 
(S_0(k,m) - x) \varphi 
(S_0(\ell,m) - x) \right]
 - \mathbb{E}
\left[ \prod_{m=1}^{K-1} \varphi 
(G(k,m) - x) \varphi 
(G(\ell,m) - x) \right] \right|
\\ & 
\leq \int_{\mathbb{R}^{2K-2} }
 \prod_{m=1}^{K-1} |\eta(\lambda_m)| |\eta(\mu_m)|  \dots 
 \\ &  \dots \times 
\left| \mathbb{E} [e^{- i \sum_{m=1}^{K-1} (\lambda_m S_0(k,m)
+ \mu_m S_0(\ell,m))}] - 
\mathbb{E} [e^{- i \sum_{m=1}^{K-1} (\lambda_m G(k,m)
+ \mu_m G(\ell,m))}] \right| \prod_{m=1}^{K-1} d \lambda_m d \mu_m.
\end{align*}
If $T$ is large enough depending only on $K$ and $\delta$, we deduce, from the two previous propositions: 
$$\left| \mathbb{E} [e^{- i \sum_{m=1}^{K-1} (\lambda_m S_0(k,m)
+ \mu_m S_0(\ell,m))}] - 
\mathbb{E} [e^{- i \sum_{m=1}^{K-1} (\lambda_m G(k,m)
+ \mu_m G(\ell,m))}] \right| 
\ll_h \exp ( -(\log T)^{\delta/5K} ),$$
if $|\lambda_m|, |\mu_m| \leq (\log T)^{\delta/50K}$ for $m \in \{1, \dots, K-1\}$. 
Since the difference of expectations is 
unconditionally bounded by $2$, we get, for 
$T$ large enough depending on $\delta$ and $K$: 
\begin{align*}
&\left|\mathbb{E}
\left[ \prod_{m=1}^{K-1} \varphi 
(S_0(k,m) - x) \varphi 
(S_0(\ell,m) - x) \right]
 - \mathbb{E}
\left[ \prod_{m=1}^{K-1} \varphi 
(G(k,m) - x) \varphi 
(G(\ell,m) - x) \right] \right|
\\ & \ll_h (||\eta||_{\infty})^{2K-2}
\left( [2(\log T)^{\delta/50K}]^{2K-2} 
\exp ( -(\log T)^{\delta/5K} ) \right)
\\ & + \int_{\mathbb{R}^{2K-2}}
\prod_{m=1}^{K-1} |\eta(\lambda_m)|
|\eta(\mu_m)| \mathds{1}_{\exists m
\in \{1, \dots, K-1\}, \max(|\lambda_m|, 
|\mu_m|) \geq (\log T)^{\delta/50K}} 
 \prod_{m=1}^{K-1} d \lambda_m d \mu_m.
\end{align*}
It is immediate that the first term is 
$\mathcal{O}_{\eta, \delta, K, A}
((\log T)^{-A})$. 
By bounding the indicator of a union by 
the sum of the indicators, we see that the second term is at most: 
$$ (2K-2) (||\eta||_1)^{2K-3}
\int_{\mathbb{R} \backslash (- (\log T)^{\delta/50 K } , (\log T)^{\delta/50 K } ) } |\eta(\lambda)| d \lambda.
$$ 
Since the tail of $\eta$ decays faster than any power, we see that the second term is also $\mathcal{O}_{\eta, \delta, K, A}
((\log T)^{-A})$. 
This proves the proposition for $T$ large enough depending on $\delta$ and $K$: we can then remove this assumption by increasing the implicit constant in 
$\mathcal{O}_{\varphi, K, \delta, h, A}$. 
\end{proof}
\section{The lower bound in the main theorem}
 \label{section:lowerbound}

We start this section by proving an estimate of the expectations involved in Proposition \ref{0.14}. This will gives some 
information on our problem, by using the following remark: if $x > 0$, if $\varphi$ vanishes on $\mathbb{R}_-$ and if 
$$\prod_{m=1}^{K-1} \varphi(S_0(k,m) - x) \neq 0,$$
then $S_0(k,m) > x$ for all $m \in \{1, \dots, K-1\}$, which implies
$$\sum_{m=1}^{K-1} S(k,m) \geq \sum_{m=1}^{K-1} S_0(k,m) \geq (K-1)x.$$
Hence, if
 $$J :=  \sum_{k = 0}^{H-1} \prod_{m=1}^{K-1} \varphi( S_0(k,m) - x),$$
and if we are able to prove that $\mathbb{E}[J^2] =(\mathbb{E}[J])^2 ( 1 + o(1))$, then by using Paley-Zygmund inequality, we deduce that $J > 0$ with probability tending to one when $T$ goes to infinity,  and then
$$\sum_{m=1}^{K-1} S(k,m) \geq (K-1)x$$ 
for at least one value of $k$ between $0$ and $H-1$. After controlling the probability to get large negative values of 
$S(k,0)$ for some $k$, we deduce that $\sum_{m=0}^{K-1} S(k,m) $ is large for some value of $k$ with high probability. This  gives the lower bound we claim if the values of the parameters are suitably chosen. 

The two first moments of $J$ can be written as a sum of expectations involved in Proposition \ref{0.14}. 
In order to estimate these expectations, and compare the first and the second moment of $J$,  we use the fact that  $G(k,m)$ and $G(\ell,m)$ are approximately decorrelated if 
$k$ and $\ell$ have a distance larger than $H^{1-m/K}$.

All the main steps described here are classical, and can for example  be found in the papers by Kistler \cite{bib:Kistler} and by Arguin, Belius and Bourgade \cite{bib:ABB}.

We first consider the right-hand sides involved in Proposition \ref{0.14}, which only depend on Gaussian variables. 
We  can bound the covariance of 
$G(k,m)$ and $G(\ell,m)$ as follows: 
\begin{lemma}
We have, for $k, \ell \in \{0,1,\dots, H-1\}$, $m \in \{1,2, \dots, K-1\}$,
$$\mathbb{E} [G(k,m)^2] =  \frac{\log H}{2K}$$ and 
$$|\mathbb{E} [G(k,m) G(\ell,m)]|
\ll_h \frac{H^{1 - (m/K)}}{|k-\ell|}.$$
\end{lemma}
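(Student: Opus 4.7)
The plan is to compute both claimed quantities directly from the explicit covariance identity
$$\mathbb{E}[G(k,m)\,G(\ell,m)] = \tfrac{1}{2}\, I\!\left(e^{e^{m\log H/K}},\ e^{e^{(m+1)\log H/K}},\ (k-\ell)h/H\right),$$
where $I(x,y,\theta) := \int_x^y \cos(\theta \log t)/(t\log t)\,dt$. This identity was already derived inside the proof of Proposition \ref{VG} from the formal Itô rules $\mathbb{E}[dW_t\,\overline{dW_t}] = 2\,dt$ and $\mathbb{E}[(dW_t)^2] = 0$, so I would simply quote it and reduce the whole lemma to estimating $I$ on the prescribed interval.

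For the first assertion (the diagonal case $k=\ell$), the cosine is identically $1$ and the integral telescopes:
$$I(e^{e^{m\log H/K}},\ e^{e^{(m+1)\log H/K}},\ 0) = \log\log y - \log\log x = (m+1)(\log H)/K - m(\log H)/K = (\log H)/K,$$
which immediately yields $\mathbb{E}[G(k,m)^2] = (\log H)/(2K)$.

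For the second assertion, I would substitute $u = \log t$ to rewrite
$$I(x,y,\theta) = \int_{H^{m/K}}^{H^{(m+1)/K}} \frac{\cos(\theta u)}{u}\, du, \qquad \theta = (k-\ell)h/H,$$
and then integrate by parts once, choosing the antiderivative $\sin(\theta u)/\theta$ of $\cos(\theta u)$. This gives
$$\int_a^b \frac{\cos(\theta u)}{u}\,du = \left[\frac{\sin(\theta u)}{\theta u}\right]_a^b + \frac{1}{\theta}\int_a^b \frac{\sin(\theta u)}{u^2}\,du,$$
each piece being bounded in absolute value by $O(1/(|\theta|\,a))$. Substituting $a = H^{m/K}$ and $|\theta| = |k-\ell|h/H$ produces
$$|I(x,y,\theta)| \ll \frac{1}{|\theta|\,H^{m/K}} \ll_h \frac{H^{1-m/K}}{|k-\ell|},$$
which is the claim.

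There is essentially no obstacle: the estimate is a standard Dirichlet-type bound for an oscillatory integral against $1/u$, and the only book-keeping is the tidy relation $\log(e^{e^{m\log H/K}}) = H^{m/K}$ which makes the exponents line up correctly. I would also briefly remark that in the ``non-oscillatory'' regime $|k-\ell| \leq H^{1-m/K}$ the stated bound exceeds the variance $(\log H)/(2K)$ and is therefore trivially valid by Cauchy--Schwarz, so the content of the estimate is really in the range $|k-\ell| > H^{1-m/K}$, where the oscillation of the cosine kicks in.
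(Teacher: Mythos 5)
Your proof is correct and follows essentially the same route as the paper: you quote the covariance identity $\mathbb{E}[G(k,m)G(\ell,m)] = \tfrac12 I(x,y,(k-\ell)h/H)$ derived in Proposition \ref{VG}, read off the diagonal case from $\log\log y - \log\log x = (\log H)/K$, and bound the off-diagonal case by a single integration by parts of the oscillatory integral $\int_a^b \cos(\theta u)\,u^{-1}\,du$ with $a = H^{m/K}$, exactly as the paper does (the paper phrases it via the change of variable $v = |\theta|u$ and then integrates by parts, which is the same computation). One small caveat on your closing remark: it is not quite true that the estimate is ``trivially valid by Cauchy--Schwarz'' in the range $|k-\ell|\le H^{1-m/K}$, because Cauchy--Schwarz only yields $|\mathbb{E}[G(k,m)G(\ell,m)]|\le (\log H)/(2K)$, which grows with $H$ and so is not $\ll_h 1$; fortunately this does not matter, since your integration-by-parts bound already covers all $k\neq\ell$ uniformly.
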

We recall that $H$ is the integer part of $(\log T)^{1-\delta}$ for some parameter $\delta \in (0,1/2)$ to be fixed later. 
 \begin{proof}
 We have seen that 
 $$\mathbb{E} [G(k,m) G(\ell,m)]
 = \frac{1}{2} I(x,y, \theta),$$
 where
 $$x = e^{e^{m \log H/K}}, 
 y = e^{e^{(m+1)\log H/K}}, 
 \theta = (k-\ell) h/H,$$
 and 
 $$I(x,y,\theta) = 
 \int_{x}^y \frac{\cos (\theta \log t)}{t 
 \log t} dt = \int_{\log x}^{\log y} 
 \frac{\cos(  \theta u)}{u} du.$$
 For $k = \ell$ and then $\theta = 0$, we deduce  
  $$\mathbb{E} [G(k,m)^2] = \frac{1}{2} 
 ( \log \log y 
  - \log \log x )= \frac{\log H}{2K}.$$
  For $k \neq \ell$ and then $\theta \neq 0$, we have
  $$I(x,y,\theta) = \int_{|\theta| \log x}^{|\theta| \log y} \frac{\cos v}{v} dv
  \ll \frac{1}{|\theta| \log x}$$
  by integration by parts, which gives 
  $$|\mathbb{E} [G(k,m) G(\ell,m)]|
  \ll \frac{H}{h|k-\ell|} e^{- m \log H / K}$$
  and the conclusion of the lemma. 
 \end{proof}
 This lemma is used to get the following estimates of the quantities involved in Proposition \ref{0.14}: 
 \begin{proposition} \label{0.16}
 Let us take the notation above, and let us 
 assume that $\varphi$ is not identically zero, smooth, with compact support, nonnegative and equal to zero on $\mathbb{R}_-$. 
 Then, for $k, \ell \in \{0,1, \dots, H-1\}$, $m \in \{1, \dots ,K-1\}$, 
 $\nu \in (0,1)$, $x =  K^{-1} (\log H)\sqrt{1-\nu}$,
 we have
 $$\frac{H^{-(1-\nu)/K}}{\sqrt{\log H}} \ll_{K, \varphi} \mathbb{E} [  \varphi(G(k,m) - x)] 
 \ll_{K, \varphi} \frac{H^{-(1-\nu)/K}}{\sqrt{\log H}} ,$$
 for $|k - \ell| > H^{1-(m/K)}$,
$$\mathbb{E} [ \varphi(G(k,m) - x)
\varphi(G(\ell,m) - x)] 
\ll_{h,K, \varphi} H^{-2(1-\nu)/K}$$
and for $|k - \ell| > H^{1 - (1/2K)}$, 
\begin{align*}
&\mathbb{E} [ \varphi(G(k,m) - x)
\varphi(G(\ell,m) - x)]
\\ & = \mathbb{E} [ \varphi(G(k,m) - x)]
\mathbb{E} [ \varphi(G(\ell,m) - x)]
\left(1 + \mathcal{O}_{h,K, \varphi} \left( H^{-1/2K} \right) \right).
\end{align*}
 \end{proposition}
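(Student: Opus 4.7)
The plan is to reduce all three bounds to explicit computations with Gaussian densities, exploiting the fact that $G(k,m)$ has variance $\sigma^2 = (\log H)/(2K)$, so that the threshold $x = K^{-1}(\log H)\sqrt{1-\nu}$ satisfies the key scaling relation $x^2/(2\sigma^2) = (1-\nu)(\log H)/K$, giving $e^{-x^2/(2\sigma^2)} = H^{-(1-\nu)/K}$.

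For the first estimate I would write
\[
\mathbb{E}[\varphi(G(k,m)-x)] = \frac{e^{-x^2/(2\sigma^2)}}{\sigma\sqrt{2\pi}}\int \varphi(u) e^{-u^2/(2\sigma^2) - ux/\sigma^2}\,du,
\]
and use that $x/\sigma^2 = 2\sqrt{1-\nu} \in (0,2)$ is uniformly bounded in $\nu$, while $u$ ranges over the compact support of $\varphi$. Thus the remaining integral is bounded above and below by positive constants depending only on $K$ and $\varphi$ (using that $\varphi\ge 0$ is not identically zero), and the prefactor $1/(\sigma\sqrt{2\pi}) \asymp 1/\sqrt{\log H}$ combines with $e^{-x^2/(2\sigma^2)} = H^{-(1-\nu)/K}$ to give both bounds.

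For the joint moments, introduce the correlation $\rho := \mathbb{E}[G(k,m)G(\ell,m)]/\sigma^2$; by the preceding lemma and $\sigma^2 \asymp \log H/K$,
\[
|\rho| \ll_h \frac{K H^{1-m/K}}{|k-\ell|\,\log H}.
\]
Using the bivariate Gaussian density after the shift $u=u'-x$, $v=v'-x$, the exponent organizes as
\[
\frac{u'^2 + v'^2 - 2\rho u'v'}{2\sigma^2(1-\rho^2)} + \frac{x(u'+v')}{\sigma^2(1+\rho)} + \frac{x^2}{\sigma^2(1+\rho)},
\]
so the joint expectation equals $\tfrac{e^{-x^2/(\sigma^2(1+\rho))}}{2\pi\sigma^2\sqrt{1-\rho^2}}$ times a bounded integral. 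For (2), the hypothesis $|k-\ell|>H^{1-m/K}$ yields $|\rho \log H| \ll_{h,K} 1$, hence $H^{-2(1-\nu)/(K(1+\rho))} \le H^{-2(1-\nu)/K}\cdot e^{O_{h,K}(1)}$, which combined with the $1/(\log H)$ prefactor produces the claimed upper bound.

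For (3), the stronger hypothesis $|k-\ell|>H^{1-1/(2K)}$ forces $|\rho \log H| \ll_{h,K} H^{-1/(2K)}$, which is small. I would then write the bivariate density as the product of marginals times a correction factor
\[
\frac{1}{\sqrt{1-\rho^2}}\exp\!\left(\frac{2\rho(u'+x)(v'+x) - \rho^2[(u'+x)^2+(v'+x)^2]}{2\sigma^2(1-\rho^2)}\right) = 1 + O_{h,K,\varphi}\bigl(H^{-1/(2K)}\bigr),
\]
valid uniformly for $u',v'$ in $\mathrm{supp}\,\varphi$ because $(u'+x)^2/\sigma^2 = O(\log H)$ and $\rho \log H$ is controlled. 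Integrating against $\varphi(u')\varphi(v')$ and the product of marginal Gaussians recovers $(\mathbb{E}[\varphi(G(k,m)-x)])^2$ times $1+O_{h,K,\varphi}(H^{-1/(2K)})$. The main subtlety is the dichotomy between the two regimes: in case (2) one only needs the crude bound $|\rho|\cdot x^2/\sigma^2 = O_{h,K}(1)$, whereas case (3) demands a genuine first-order expansion, which is why the threshold must be tightened from $H^{1-m/K}$ to $H^{1-1/(2K)}$ in order to make $\rho \log H$ a true o(1) quantity.
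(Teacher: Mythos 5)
Your proof is correct and is essentially the paper's own approach: both proofs reduce to explicit Gaussian density computations exploiting the identity $e^{-x^2/(2\sigma^2)} = H^{-(1-\nu)/K}$ with $\sigma^2 = (\log H)/(2K)$, and both use the covariance bound $|\mathbb{E}[G(k,m)G(\ell,m)]| \ll_h H^{1-m/K}/|k-\ell|$ from the preceding lemma. The first estimate is identical. For the second and third estimates the presentation differs slightly but not substantively: the paper bounds the joint probability via the decomposition $G(k,m)=\rho\,G(\ell,m)+\widetilde G\sqrt{1-\rho^2}$ and shows $\widetilde G$ must also be near $x$, whereas you write out the bivariate density directly and complete the square in the exponent; and for the third estimate the paper compares the inverse covariance matrices $C^{-1}$ and $C_0^{-1}$ entrywise, whereas you factor the bivariate density as the product of marginals times an explicit correction factor and Taylor-expand. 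In both cases the quantity being controlled is the same, namely $\rho\cdot x^2/\sigma^2 \asymp \rho\log H$, which is $O_{h,K}(1)$ under $|k-\ell|>H^{1-m/K}$ and $O_{h,K}(H^{-1/2K})$ under $|k-\ell|>H^{1-1/2K}$. One minor point you leave implicit (as does the paper) is that these manipulations require $|\rho|<1$, which holds for $H$ large depending on $h,K$ and is absorbed into the implicit constants otherwise.
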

 \begin{proof}
 We have 
 $$\mathbb{E} [\varphi(G(k,m) - x)] 
 = \frac{1}{\sqrt{2 \pi (\log H)/2K}}
 \int_{0}^{A} e^{- K (x+ t)^2/ (\log H)}
\varphi(t) dt,$$
 if the support of $\varphi$ is included in 
 $[0,A]$. Hence, 
  $$\mathbb{E} [\varphi(G(k,m) - x)] 
 \leq  \frac{A ||\varphi||_{\infty}}{\sqrt{2 \pi (\log H)/2K}}
 e^{- K x^2/ (\log H)}
 \ll_{K,\varphi}  \frac{H^{-(1-\nu)/K}}{\sqrt{\log H}}.
 $$
 On the other hand, if $\varphi$ is 
 larger than $\varepsilon >0$ on an interval 
 $[t_0,t_1] \subset [0,A]$, 
 $$\mathbb{E} [\varphi(G(k,m) - x)] 
 \geq \frac{\varepsilon (t_1 - t_0)}{\sqrt{2 \pi (\log H)/2K}}
 e^{- K (x+t_1)^2/ (\log H)}
 \gg_{K,\varphi}  \frac{H^{-(1-\nu)/K}}
 {\sqrt{\log H}},
 $$
 since 
 $$\frac{K [(x+t_1)^2 - x^2]}{\log H}
 = \frac{K t_1 (2x + t_1)}{\log H}
 \ll_{K,\varphi}  1.
 $$
 For the second estimate, we observe that 
 $$ \mathbb{E} [ \varphi(G(k,m) - x)
\varphi(G(\ell,m) - x)] 
 \leq ||\varphi||_{\infty}^2
\mathbb{P} [G(k,m), G(\ell,m) \in [x,x+A]].$$
Now, we can write the equality in distribution 
$$G(k,m) = \rho \, G(\ell,m) + 
\widetilde{G} \sqrt{1 - \rho^2} ,$$
where $\widetilde{G}$ is independent of $G(\ell,m)$, with the same variance, and
$$\rho := \frac{\mathbb{E}[G(k,m) G(\ell,m)]}{\mathbb{E}[G(\ell,m)]^2}.$$
If $|k-\ell| > H^{1-(m/K)}$, then 
$$|\rho| \ll_h \frac{H^{1-(m/K)}}{|k-\ell| (\log H)/2K} \ll_{h,K} \frac{1}{\log H}$$
We deduce that if $G(k,m)$ and $G(\ell,m)$ are in $[x,x+A]$, and if $H$
is large enough depending on $h$ and $K$,
\begin{align*}
\widetilde{G} & = \frac{1}{\sqrt{1 - \rho^2}} \left( G(k,m) - \rho \, G(\ell,m) \right)
\\ &  = \left(1 - \mathcal{O}_{h,K} \left(\frac{1}{\log^2 H} \right) \right)^{-1/2} 
 \left( x + \mathcal{O}(A)  
 - \mathcal{O}_{h,K} \left( \frac{x+A}{\log H} \right) \right)
 \\ & = x + \mathcal{O}_{h,K, \varphi} (1).
\end{align*}
Hence 
\begin{align*}
& \mathbb{E} [ \varphi(G(k,m) - x)
\varphi(G(\ell,m) - x)] 
\ll_{\varphi} 
\mathbb{P} [G(\ell,m), \widetilde{G}
 \geq x - \mathcal{O}_{h,K, \varphi}(1) ]
 \\  & \leq 
 \exp \left(  - \frac{2K(x - \mathcal{O}_{h,K,\varphi}(1))_+^2}{ \log H} \right)
 \ll_{h,K,\varphi}  \exp \left(  - \frac{2 Kx^2}{ \log H} \right) 
 = H^{-2(1 - \nu)/K},
 \end{align*}
 for $H$ large enough depending on $h$ and $K$. This condition can then be removed by changing the implicit constant of the estimate. 
 
 For the last estimate, under the assumption 
 $|k-\ell| > H^{1- (1/2K)}$, 
 we have, as soon as $C$ is invertible: 
\begin{align*}
& \mathbb{E} [ \varphi(G(k,m) - x)
\varphi(G(\ell,m) - x)]
- \mathbb{E} [ \varphi(G(k,m) - x)]
\mathbb{E} [ \varphi(G(\ell,m) - x)]
\\ & = \frac{1}{2 \pi} \int_{0}^{A} \int_0^A 
\left( \frac{e^{-  \frac{1}{2} (t+x, u+x) C^{-1} (t+x ,u+x)^t}}{ \sqrt{\operatorname{det}(C)}} 
- \frac{e^{-  \frac{1}{2} (t+x, u+x) C_0^{-1} (t+x ,u+x)^t}}{ \sqrt{\operatorname{det}(C_0)}}
\right)\varphi(t) \varphi(u) dt \,  du,
\end{align*}
where $C$ is the covariance matrix of 
$(G(k,m), G(\ell,m))$, and $C_0$ its
diagonal part. 
 Now,  
 $$|\mathbb{E} [G(k,m) G(\ell,m)] |
 \ll_h \frac{H^{1-(m/K)}}{|k-\ell|} 
 \leq \frac{H^{1-(1/K)}}{H^{1 - (1/2K)}} 
 \leq H^{-1/2K},$$
 $$C_0 = \left( \begin{array}{cc}
\frac{\log H}{2K} & 0 \\
0 &  \frac{\log H}{2K}\end{array} \right),
$$ 
$$C =  \left( \begin{array}{cc}
\frac{\log H}{2K} & \mathcal{O}_h(H^{-1/2K}) \\
 \mathcal{O}_h(H^{-1/2K})  &  \frac{\log H}{2K}\end{array} \right),$$
$$\operatorname{\det} (C_0) = \frac{\log^2 H}{4K^2},$$
$$\operatorname{\det} (C) = \frac{\log^2 H}{4K^2} + \mathcal{O}_h( H^{-1/K}).$$
For $H$ large enough depending only on $h$ and $K$, $C$ is invertible and 
\begin{align*}
& C^{-1} 
= \left( \frac{\log^2 H}{4K^2} + \mathcal{O}_h( H^{-1/K}) \right)^{-1} 
\left( \begin{array}{cc}
\frac{\log H}{2K} & \mathcal{O}_h(H^{-1/2K}) \\
 \mathcal{O}_h(H^{-1/2K})  &  \frac{\log H}{2K}\end{array} \right)
 \\ & = \left( 1 + \mathcal{O}_{h,K} (H^{-1/K}
 \log^{-2}H) \right)
 \left( \begin{array}{cc}
\frac{2K}{\log H} & \mathcal{O}_{h,K}(H^{-1/2K} \log^{-2} H ) \\
 \mathcal{O}_{h,K}(H^{-1/2K} \log^{-2} H )  &  \frac{2K}{\log H}\end{array} \right) 
 \\ & = C_0^{-1} + \mathcal{O}_{h,K} (H^{-1/2K}\log^{-2} H).
 \end{align*}
 We then have 
 $$ |(t+x, u+x) ( C^{-1} - C_0^{-1}) (t+x ,u+x)^t| \ll_{h,K, \varphi} 
 (\log H) (H^{-1/2K}\log^{-2} H) (\log H)
 = H^{-1/2K}$$
 whereas 
 $$\sqrt{\frac{\operatorname{det}(C)}
 {\operatorname{det}(C_0)}}
 = 1+ \mathcal{O}_{h,K} (H^{-1/K} \log^{-2} H) ,$$
 which implies 
 $$\frac{e^{-  \frac{1}{2} (t+x, u+x) C^{-1} (t+x ,u+x)^t}}{ \sqrt{\operatorname{det}(C)}} 
=  \frac{e^{-  \frac{1}{2} (t+x, u+x) C_0^{-1} (t+x ,u+x)^t}}{ \sqrt{\operatorname{det}(C_0)}} \left( 
 1+ \mathcal{O}_{h,K,\varphi} (H^{-1/2K})
  \right),$$
 \begin{align*}
& \mathbb{E} [ \varphi(G(k,m) - x)
\varphi(G(\ell,m) - x)]
- \mathbb{E} [ \varphi(G(k,m) - x)]
\mathbb{E} [ \varphi(G(\ell,m) - x)]
\\ & =  \frac{1}{2 \pi} \int_{0}^{A} \int_0^A 
 \frac{e^{-  \frac{1}{2} (t+x, u+x) C_0^{-1} (t+x ,u+x)^t}}{ \sqrt{\operatorname{det}(C_0)}} 
\mathcal{O}_{h,K,\varphi} (H^{-1/2K})
\varphi(t) \varphi(u) dt \,  du
\\ & \ll_{h,K,\varphi} \frac{H^{-1/2K}}{2 \pi} 
\int_{0}^{A} \int_0^A 
 \frac{e^{-  \frac{1}{2} (t+x, u+x) C_0^{-1} (t+x ,u+x)^t}}{ \sqrt{\operatorname{det}(C_0)}} 
\varphi(t) \varphi(u) dt \,  du
\end{align*}
by positivity of the last integrand, which proves the result of the proposition, for $H$ large enough depending on $h$ and $K$. 
 Again, this assumption can be removed by changing the implicit constant in $\mathcal{O}_{h,K, \varphi}$. 
 \end{proof}
 
 By using Propositions \ref{0.14} and \ref{0.16}, we can apply the second moment method to the random variables $S_0(k,m)$: 
 \begin{proposition}
 With the notation and under the assumptions of the previous proposition and lemma, we have, for $T$ large enough 
 depending on $\varphi, K, \delta, h$, 
 and for $\nu \in (0,1/2)$, 
 $$\mathbb{E} [J^2] = \left(\mathbb{E}[J]
 \right)^2 (1+  \mathcal{O}_{\varphi, K, 
 \delta, h} ((\log T)^{-\nu(1-\delta)/K} (\log \log T)^{K-1})),$$
 where 
 $$J :=  \sum_{k = 0}^{H-1} \prod_{m=1}^{K-1} \varphi( S_0(k,m) - x).$$
 \end{proposition}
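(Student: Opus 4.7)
The plan is to apply Proposition \ref{0.14} termwise to reduce both moments of $J$ to Gaussian computations in the variables $G(k,m)$, and then exploit two features of these variables: their independence across $m$ (the stochastic integrals have disjoint supports) and the quantitative decorrelation estimates of Proposition \ref{0.16}. First, applying Proposition \ref{0.14} to each of the $H$ summands of $J$, with $A$ large enough that $H(\log T)^{-A}$ is negligible, yields $\mathbb{E}[J] = H\prod_{m=1}^{K-1}\mathbb{E}[\varphi(G(k,m)-x)] + o(1)$, and the first estimate of Proposition \ref{0.16} then gives $\mathbb{E}[J]$ of order $H^{(1+(K-1)\nu)/K}/(\log H)^{(K-1)/2}$, which tends to infinity as a positive power of $H$.

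For the second moment, applying Proposition \ref{0.14} to each of the $H^2$ pairs of summands and using independence of $G(k,\cdot)$ across $m$ gives $\mathbb{E}[J^2] = \sum_{k,\ell=0}^{H-1} \prod_{m=1}^{K-1} \mathbb{E}[\varphi(G(k,m)-x)\varphi(G(\ell,m)-x)] + O(H^2 (\log T)^{-A})$. I would split the double sum into three regimes. For \emph{far pairs} $|k-\ell| > H^{1-1/(2K)}$, the third estimate of Proposition \ref{0.16} factorizes each $m$-th expectation up to a multiplicative error $1+\mathcal{O}(H^{-1/(2K)})$; summing over these reproduces $(\mathbb{E}[J])^2(1 + \mathcal{O}_{\varphi,K,h}(H^{-1/(2K)}))$. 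The \emph{diagonal} $k=\ell$ contributes at most $H \cdot ||\varphi||_\infty^{K-1} \prod_m \mathbb{E}[\varphi(G(k,m)-x)]$, of order $\mathbb{E}[J]$ and hence negligible compared to $(\mathbb{E}[J])^2$.

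The main work is the \emph{close pairs} $1 \leq |k-\ell| \leq H^{1-1/(2K)}$, which I would treat by a branching argument. For such a pair, let $m^* = m^*(k,\ell)$ be the smallest $m \in \{1,\ldots,K\}$ with $|k-\ell| > H^{1-m/K}$. Then the second estimate of Proposition \ref{0.16} gives $\mathbb{E}[\varphi(G(k,m)-x)\varphi(G(\ell,m)-x)] \ll_h H^{-2(1-\nu)/K}$ whenever $m \geq m^*$, while for $m < m^*$ I would use the trivial bound $\leq ||\varphi||_\infty \mathbb{E}[\varphi(G(k,m)-x)]$. Multiplying over $m$ yields a per-pair bound of order $P\cdot(p\log H)^{K-m^*}$, where $p$ is the common order $H^{-(1-\nu)/K}/\sqrt{\log H}$ of a single factor and $P := p^{K-1}$. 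Combined with the count that at most $\mathcal{O}(H^{2-(j-1)/K})$ close pairs have $m^* = j$, division by $(\mathbb{E}[J])^2$ produces after explicit cancellation a ratio of order $\sum_{j \geq 2} H^{-\nu(j-1)/K}(\log H)^{(2K-j-1)/2}$ from the genuine branching regime, plus an $H^{-1/(2K)}(\log H)^{K-1}$ term from the intermediate layer $H^{1-1/K} < |k-\ell| \leq H^{1-1/(2K)}$. For $\nu < 1/2$ the dominant term is $j=2$, of order $H^{-\nu/K}(\log H)^{K-1}$; substituting $H$ of order $(\log T)^{1-\delta}$ yields the claimed bound $(\log T)^{-\nu(1-\delta)/K}(\log\log T)^{K-1}$.

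The chief obstacle is the branching bookkeeping for close pairs: the decorrelation estimate of Proposition \ref{0.16} is weaker than $p^2$ by a factor of $\log H$ per level (it gives $\ll H^{-2(1-\nu)/K}$ rather than $p^2 \sim H^{-2(1-\nu)/K}/\log H$), so one accumulates a factor $(\log H)^{\mathcal{O}(K)}$ across the $K-1$ levels. The key observation is that the polynomial decay $H^{-\nu(j-1)/K}$ coming from the scarcity of pairs at scale $H^{1-(j-1)/K}$ strictly dominates these logarithmic losses for each $j \geq 2$. This balance between the number of close pairs and the strength of decorrelation at each dyadic scale is the characteristic feature of second-moment methods for log-correlated fields at criticality.
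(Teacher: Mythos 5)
Your proposal is correct and follows essentially the same approach as the paper: apply Proposition \ref{0.14} termwise to both moments to pass to the Gaussian variables $G(k,m)$, then decompose the double sum in $\mathbb{E}[J^2]$ into far pairs (handled by the factorization estimate in Proposition \ref{0.16}), close pairs stratified by the scale index ($m^*$ in your notation, $r$ in the paper's), and the diagonal, balancing the pair-count $H^{2-(j-1)/K}$ against the decorrelation gain $H^{-2(1-\nu)/K}$ per level $m\geq m^*$. The paper's bookkeeping is slightly cruder (it bounds all the close-pair contributions by $H^{2\nu+(2-3\nu)/K}$ and divides by the lower bound on $(\mathbb{E}[J])^2$, which is where the $(\log H)^{K-1}$ enters), whereas you track the per-level $\log H$ loss explicitly, but the final estimate $H^{-\nu/K}(\log H)^{K-1}$ and its translation to $(\log T)^{-\nu(1-\delta)/K}(\log\log T)^{K-1}$ are the same.
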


\begin{proof}
Using Proposition \ref{0.14} and the independence of the Gaussian variables $G(k,m)$, we get 
\begin{align*}\mathbb{E}[J] 
& = \sum_{k=0}^{H-1}  \left(
\prod_{m=1}^{K-1} \mathbb{E} \left[   \varphi( G(k,m) - x)
\right]  + \mathcal{O}_{\varphi,K, \delta, h, A} ((\log T)^{-A}) \right)
\\ & = H \left(\mathbb{E} \left[   \varphi( G(0,1) - x)
\right] \right)^{K-1} 
+ \mathcal{O}_{\varphi, K, \delta, h,B} 
(H^{-B}) 
\end{align*}
for all $B > 0$, since $ H \leq \log T$. 
Hence, by taking $B = 1$ (say), and by using  Proposition \ref{0.16}, 
$$\mathbb{E}[J] \gg_{\varphi, K, \delta, h}  \frac{H^{1 - (1-\nu)(K-1)/K}}{(\log H)^{(K-1)/2}}$$
for $H$, and then $T$,  large enough depending on $\varphi, K, \delta, h$. 
On the other hand, 
\begin{align*}
\mathbb{E}[J^2] 
& = \sum_{0 \leq k, \ell \leq H-1}  \left( 
\prod_{m=1}^{K-1} \mathbb{E} [\varphi(G(k,m) - x) \varphi(G(\ell,m) - x) ] + \mathcal{O}_{\varphi,K, \delta, h, A} ((\log T)^{-A}) \right)
\\ & =  \sum_{0 \leq k, \ell \leq H-1}  
\prod_{m=1}^{K-1} \mathbb{E} [\varphi(G(k,m) - x) \varphi(G(\ell,m) - x) ]  + \mathcal{O}_{\varphi, K, \delta, h,B} (H^{-B}).
\end{align*}
For $H^{1-(r/K)}  < |k-\ell| \leq H^{1-((r-1)/K)}$, $r$ integer between $1$ and $K$, or $|k-\ell| \leq 1$ for $r = K$, we get 
\begin{align*}
\prod_{m=1}^{r-1} 
\mathbb{E} [\varphi(G(k,m) - x) \varphi(G(\ell,m) - x) ] 
& \leq ||\varphi||_{\infty}^{r-1}
\prod_{m=1}^{r-1} 
\mathbb{E} [\varphi(G(k,m) - x)]
\\ & \ll_{K, \varphi} \left(\frac{H^{-(1-\nu)/K}}{\sqrt{\log H}} \right)^{r-1} 
\ll_{K, \varphi} H^{-(r-1)(1-\nu)/K}, 
\end{align*}
and from the fact that 
$|k-\ell| > H^{1-(m/K)}$ for $m \geq r$
(which implies that we do not have $r = K$), 
$$\prod_{m=r}^{K-1} 
\mathbb{E} [\varphi(G(k,m) - x) \varphi(G(\ell,m) - x) ] 
\ll_{h,K, \varphi}  H^{-2(K-r) (1-\nu)/K},
$$
and then 
$$\prod_{m=1}^{K-1} 
\mathbb{E} [\varphi(G(k,m) - x) \varphi(G(\ell,m) - x) ] 
\ll_{h,K, \varphi}  H^{-[r-1 + 2(K-r)] (1-\nu)/K}.
$$
For $2 \leq r \leq K$, the number of couples $(k, \ell)$ with 
$|k-\ell| \leq H^{1 - ((r-1)/K)}$ is 
dominated by $H^{2  - ((r-1)/K)}$, which gives 
\begin{align*}
\sum_{0 \leq k, \ell \leq H-1, 
H^{1 - (r/K)} < |k-\ell| \leq 
H^{1 - ((r-1)/K)}} 
& \prod_{m=1}^{K-1} 
\mathbb{E} [\varphi(G(k,m) - x) \varphi(G(\ell,m) - x) ] 
\\ & \ll_{h,K, \varphi} 
H^{2 - [r-1 + (1 -\nu) (r-1 + 2(K-r))]/K},
\end{align*}
where 
\begin{align*}
& 2 - \frac{r-1 + (1 -\nu) (r-1 + 2(K-r))}{K} 
\\ & = \frac{ 2K + 1 +  (1 - \nu)  - 2K (1 - \nu)}{K} - \frac{r}{K} \left( 
 1 + (1-\nu) - 2 (1 - \nu) \right) 
 \\ & = \frac{2 K \nu + 2 - \nu}{K} 
 - \frac{r \nu}{K}  \leq 2 \nu + \frac{2 -3 \nu}{K},
\end{align*}
since $r \geq 2$. 
Hence, 
\begin{align*}\sum_{0 \leq k, \ell \leq H-1,  |k-\ell| \leq 
H^{1 - (1/K)}} 
& \prod_{m=1}^{K-1} 
\mathbb{E} [\varphi(G(k,m) - x) \varphi(G(\ell,m) - x) ] 
\\ & \ll_{h,K,\varphi} H^{ 2 \nu
+ (2 - 3\nu)/K}.
\end{align*}
The number of couples $(k,\ell)$ with 
$|k-\ell| \leq H^{1 - (1/2K)}$ is 
dominated by $H^{2 - (1/2K)}$, and then, by 
using the case $r = 1$, 
\begin{align*}
\sum_{0 \leq k, \ell \leq H-1, 
H^{1 - (1/K)} < |k-\ell| \leq 
H^{1 - (1/2K)}} 
& \prod_{m=1}^{K-1} 
\mathbb{E} [\varphi(G(k,m) - x) \varphi(G(\ell,m) - x) ] 
\\ & \ll_{h,K, \varphi} 
H^{2 - (1/2K) - (2(1 -\nu) (K-1)/K)},
\end{align*}
with 
\begin{align*}
& 2 - \frac{1}{2K} - \frac{2 (1-\nu) (K-1)}{K} 
\\ & = \frac{2K - (1/2) - 2K(1-\nu) + 2 (1 - \nu)}{K} 
\\ & = \frac{2K \nu+ (3/2) - 2 \nu}{K} 
= 2 \nu  + \frac{(3/2) - 2\nu}{K}
\leq 2 \nu +   \frac{2 - 3\nu}{K},
\end{align*}
since $\nu < 1/2$, 
and then 
\begin{align*}\sum_{0 \leq k, \ell \leq H-1,  |k-\ell| \leq 
H^{1 - (1/2K)}} 
& \prod_{m=1}^{K-1} 
\mathbb{E} [\varphi(G(k,m) - x) \varphi(G(\ell,m) - x) ] 
\\ & \ll_{h,K,\varphi} H^{ 2 \nu
+ (2 - 3\nu)/K}.
\end{align*}
By using the last estimate of Proposition \ref{0.16}, available for $|k-\ell| > H^{1 - (1/2K)}$,  we deduce 
\begin{align*}
\mathbb{E} [J^2] 
& \leq  \sum_{0 \leq k, \ell \leq H-1}  \left( 
\prod_{m=1}^{K-1} \mathbb{E} [\varphi(G(k,m) - x) ] \mathbb{E}[ \varphi(G(\ell,m) - x) ]
\left( 1 + \mathcal{O}_{h,K, \varphi} 
(H^{-1/2K}) \right) \right) 
\\ & + 
\mathcal{O}_{h,K,\varphi} 
(H^{ 2 \nu
+ (2 - 3\nu)/K} )+ \mathcal{O}_{\varphi, K, \delta, h, B} (H^{-B}) 
\\ & = \left( 1 + \mathcal{O}_{h,K, \varphi} 
(H^{-1/2K}) \right) \sum_{0 \leq k, \ell \leq H-1}  \left( 
\prod_{m=1}^{K-1} \mathbb{E} [\varphi(G(k,m) - x) ] \mathbb{E}[ \varphi(G(\ell,m) - x) ] \right)
\\ & + \mathcal{O}_{\varphi, K, \delta, h}
(H^{2 \nu + (2 - 3 \nu)/K})
\\ & =  \left( 1 + \mathcal{O}_{h,K, \varphi} 
(H^{-1/2K}) \right) \left( H( \mathbb{E} [\varphi
(G(0,1) - x) ])^{K-1}  \right)^2 +  \mathcal{O}_{\varphi, K, \delta, h}
(H^{2 \nu + (2 - 3 \nu)/K})
\\ & 
= \left( 1 + \mathcal{O}_{h,K, \varphi} 
(H^{-1/2K}) \right)
\left( \mathbb{E}[J]  + \mathcal{O}_{\varphi,K, \delta, h, B} (H^{-B}) \right)^2 + \mathcal{O}_{\varphi, K, \delta, h}
(H^{2 \nu + (2 - 3 \nu)/K}).
\end{align*}
Expanding the square and using the trivial 
estimate 
$$\mathbb{E} [J] \leq H ||\varphi||_{\infty}^{K-1} \ll_{K, \varphi} H,$$
we get 
$$\left( \mathbb{E}[J]  + \mathcal{O}_{\varphi,K, \delta, h, B} (H^{-B}) \right)^2  =  (\mathbb{E}[J])^2
+  \mathcal{O}_{\varphi,K, \delta, h, B'} (H^{-B'}),$$
and then 
$$\mathbb{E} [J^2] 
 \leq \left( 1 + \mathcal{O}_{h,K, \varphi} 
(H^{-1/2K}) \right)(\mathbb{E}[J])^2 + \mathcal{O}_{\varphi, K, \delta, h}
(H^{2 \nu + (2 - 3 \nu)/K}).$$
Now, for $T$ large enough depending on $\varphi, K, \delta, h$, 
$$(\mathbb{E}[J])^2
\gg_{\varphi, K, \delta, h}
(\log H)^{-(K-1)} H^{2 - (2-2\nu)(K-1)/K},$$
where 
\begin{align*}
2 - \frac{(2 - 2\nu) (K-1)}{K} 
& = \frac{2K - 2K + 2 + 2 K \nu - 2 \nu}{K} 
\\ & = 2 \nu + \frac{2 - 2 \nu}{K},
\end{align*}
and then 
$$\mathbb{E}[J^2] 
\leq (\mathbb{E}[J])^{2} ( 1+ \mathcal{O}_{\varphi, K, \delta, h} (H^{-\nu/K} (\log H)^{K-1})),$$
(recall that $1/2K \geq \nu/K$),
the converse inequality being obvious. 
\end{proof} 
 Using this bound on the second moment and the Paley-Zygmund inequality, we deduce the following: 
 \begin{proposition} \label{notbeginning}
 For all $\omega < (1-\delta) (K-1)/K$, we have, with probability tending to $1$ when 
 $T$ goes to infinity: 
 $$\sup_{k \in \{0, \dots, H-1\}} 
 \Re \left( \kappa \sum_{p \in \mathcal{P} \cap (e^{e^{\log H/K}}, e^H]} p^{-\frac{1}{2} - i (TU - h/2 + kh/H)}  \right)
 \geq \omega \log \log T.$$
 \end{proposition}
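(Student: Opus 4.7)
My plan is to recognize that the sum inside the supremum is exactly $\sum_{m=1}^{K-1} S(k,m)$, so the proposition reduces to producing, with probability tending to one, some index $k \in \{0,\dots,H-1\}$ with $\sum_{m=1}^{K-1} S(k,m) \geq \omega \log \log T$. Given $\omega < (K-1)(1-\delta)/K$, I would first choose $\nu \in (0,1/2)$ small enough that
$$\frac{(K-1)(1-\delta)\sqrt{1-\nu}}{K} > \omega,$$
which is possible since $\omega K/((K-1)(1-\delta)) < 1$. I would fix a function $\varphi$ satisfying all the hypotheses of Proposition~\ref{0.16} (smooth, nonnegative, not identically zero, compactly supported in $[0,A]$, vanishing on $\mathbb{R}_-$), set $x := K^{-1}(\log H)\sqrt{1-\nu}$, and form the random variable $J := \sum_{k=0}^{H-1} \prod_{m=1}^{K-1} \varphi(S_0(k,m)-x)$ as in the preceding proposition.

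Next I would invoke the second moment bound just established: for $T$ large enough depending on $\varphi, K, \delta, h$, we have $\mathbb{E}[J^2] = (\mathbb{E}[J])^2 (1 + o(1))$, since the error factor $(\log T)^{-\nu(1-\delta)/K}(\log\log T)^{K-1}$ tends to $0$. The Paley--Zygmund inequality applied to the nonnegative random variable $J$ then yields
$$\mathbb{P}(J > 0) \geq \frac{(\mathbb{E}[J])^2}{\mathbb{E}[J^2]} \underset{T\to\infty}{\longrightarrow} 1.$$
On the event $\{J > 0\}$, at least one summand in $J$ is strictly positive, i.e.\ there exists $k$ such that $\varphi(S_0(k,m)-x) > 0$ for every $m \in \{1,\dots,K-1\}$. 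Because $\varphi$ is smooth and supported in $\mathbb{R}_+$, this forces $S_0(k,m) > x$ for each such $m$. Since $x \ll \log \log T \ll (\log T)^{\delta/3}$, for $T$ large the truncation defining $S_0$ cannot have been active on this event, so in fact $S(k,m) \geq x$ for all $m \in \{1,\dots, K-1\}$. Summing over $m$ and using $\log H \geq (1-\delta)\log\log T - O(1)$ (which follows from $H = \lfloor(\log T)^{1-\delta}\rfloor$) gives
$$\sum_{m=1}^{K-1} S(k,m) \geq (K-1)x = \frac{(K-1)(\log H)\sqrt{1-\nu}}{K} \geq \omega \log \log T$$
for $T$ large enough, by the choice of $\nu$.

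All the genuine difficulty has already been dispatched in the preceding sections: Proposition~\ref{comparison} reduces the supremum of $\Re(\kappa \log \zeta)$ to sums over primes, Propositions~\ref{TV} and~\ref{VG} compare those sums with an explicit Gaussian family, Proposition~\ref{0.16} exploits the approximate branching structure of the Gaussian covariances, and the second moment calculation just before this proposition consolidates everything. The remaining argument is therefore a direct Paley--Zygmund step plus bookkeeping; the only point requiring a little care is the transition $S_0 \rightsquigarrow S$ (using the truncation level versus the chosen $x$), but this is essentially free for $T$ large. I do not foresee any genuine obstacle in the proof of this particular proposition.
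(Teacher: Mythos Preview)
Your proposal is correct and follows essentially the same approach as the paper: apply Paley--Zygmund to $J$ using the second moment estimate, extract an index $k$ with $\varphi(S_0(k,m)-x)>0$ for all $m$, deduce $S(k,m)\geq x$, and sum over $m$. The only cosmetic difference is in the passage from $S_0$ to $S$: the paper simply notes that $S_0$ is a truncation of $S$ (so $S_0(k,m)\geq x>0$ forces $S(k,m)\geq S_0(k,m)$), whereas you argue that the truncation is inactive because $x+A<(\log T)^{\delta/3}$; both are valid.
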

 \begin{proof}
 With the notation of the previous proposition, the Paley-Zygmund inequality can be written as follows, using the Cauchy-Schwarz inequality (recall that $J \geq 0$):
$$ \mathbb{P} [J > 0] \mathbb{E} [J^2]
 = (\mathbb{P} [J > 0])^2 
 \mathbb{E} [J^2 | J > 0]
 \geq (\mathbb{P} [J > 0])^2 
 (\mathbb{E} [J | J > 0])^2
 =( \mathbb{E} [J])^2,$$
 $$\mathbb{P}[J > 0] \geq \frac{(\mathbb{E}[J])^2}{ \mathbb{E}[J^2]},$$
 which, by the previous proposition, tends 
 to $1$ when $T$ goes to infinity, 
 for fixed $\varphi, K, \delta, h, \nu$. 
 Hence, with probability tending to $1$, 
 there exists $k \in \{0,1, \dots, H-1\}$ such that for all $m \in \{1, \dots, K-1\}$, $\varphi(S_0(k,m) - x ) > 0$, which implies $S_0(k,m) \geq x$, and then 
 $S(k,m) \geq x$, since $S_0(k,m)$ is a truncation of $S(k,m)$. Summing this inequality for $m \in \{1, \dots, K-1\}$, we deduce 
 \begin{align*}\Re \left( \kappa \sum_{p \in \mathcal{P} \cap (e^{e^{\log H/K}}, e^H]} p^{-\frac{1}{2} - i (TU - h/2 + kh/H)} \right)& \geq (K-1)x
 = (K-1)(\log H) \sqrt{1 -\nu}/K
\\ & \sim_{T \rightarrow \infty} 
(K-1) (1-\delta)( \log \log T) \sqrt{1 -\nu}/K.
 \end{align*}
 If we fix $\nu \in (0,1/2)$ small enough,
 the lower bound we obtain is larger than 
 $\omega \log \log T$ for $T$ large enough. 
 \end{proof}
In the previous proposition, the sum on primes which is involved starts at $p \geq e^{e^{\log H/K}}$. In order to 
solve our main problem, 
it remains to control the 
 sum over the primes smaller than 
 $e^{e^{\log H/K}}$. This sum is not as close to a Gaussian variable as the sums involving larger primes, and then the control we get is not optimal.  This will be sufficient for our purpose, but it may be problematic in we want to study finer asymptotics of the extreme values of $\log \zeta$.

This is given by the following proposition: 
 \begin{proposition} \label{beginning}
 With probability tending to $1$ when 
 $T$ goes to infinity: 
 $$\inf_{k \in \{0, \dots, H-1\}} 
 \Re \left( \kappa \sum_{p \in \mathcal{P} \cap [3, e^{e^{\log H/K}}]} p^{-\frac{1}{2} - i (TU - h/2 + kh/H)} \right)
 \geq - \frac{2}{\sqrt{K}} \log \log T.$$
 \end{proposition}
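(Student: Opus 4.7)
Write $S'(k)$ for the sum in the proposition. The plan is a Chernoff-type argument: for each $k$ I estimate the exponential moment $\mathbb{E}[e^{-\lambda S'(k)}]$ for $\lambda > 0$ bounded in $T$, apply Markov's inequality, and conclude by a union bound over the $H \leq (\log T)^{1-\delta}$ values of $k$. Setting $W_k := \sum_{p \in \mathcal{P} \cap [3, e^{H^{1/K}}]} p^{-1/2} e^{-i(TU - h/2 + kh/H)\log p}$ one has $-S'(k) = \Re(-\kappa W_k)$ with $|-\kappa|=1$. Expanding $\mathbb{E}[e^{-\lambda S'(k)}]$ as a power series, I treat each joint moment $\mathbb{E}[W_k^j \bar{W_k}^\ell]$ by the prime expansion and the $U$-integration argument of Lemma \ref{lemmaSV} and Proposition \ref{ridofphi}: the diagonal $\prod p_i = \prod q_i$ matches the random phase analog with $\tilde W := \sum_p X_p/\sqrt{p}$, $(X_p)$ i.i.d.\ uniform on the unit circle, while the off-diagonal contributes at most $O((2e^{H^{1/K}})^{j+\ell}/T)$.

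\textbf{Exponential moment in the random phase model.} By independence of the $X_p$ and rotation invariance,
$$\mathbb{E}[e^{\lambda \Re(-\kappa \tilde W)}] \;=\; \prod_{p \in \mathcal{P} \cap [3, e^{H^{1/K}}]} I_0\!\left(\frac{\lambda}{\sqrt{p}}\right) \;\leq\; \exp\!\left(\frac{\lambda^2}{4} \sum_{p \leq e^{H^{1/K}}} \frac{1}{p}\right),$$
using the bound $I_0(x) \leq e^{x^2/4}$. Mertens' theorem gives $\sum_{p \leq e^{H^{1/K}}} 1/p = (\log H)/K + O(1) \leq (1-\delta)(\log \log T)/K + O(1)$. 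Summing the off-diagonal errors over $(j,\ell)$ produces a total error of order $T^{-1} \exp(O_{K,\delta}(\lambda (\log T)^{(1-\delta)/K})) = T^{-1+o(1)}$ for $\lambda$ bounded in $T$. Therefore, for any such $\lambda$ and $T$ large enough depending on $K, \delta, h$,
$$\mathbb{E}[e^{-\lambda S'(k)}] \;\leq\; \exp\!\left(\frac{\lambda^2(1-\delta)}{4K} \log \log T + O_{K,\delta}(1)\right).$$

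\textbf{Markov, optimization, and union bound.} Markov's inequality yields
$$\mathbb{P}[-S'(k) \geq C \log \log T] \;\leq\; \exp\!\left(\Bigl(-\lambda C + \frac{\lambda^2(1-\delta)}{4K}\Bigr) \log \log T + O_{K,\delta}(1)\right).$$
Optimizing at $\lambda^* = 2KC/(1-\delta)$ gives exponent $-KC^2/(1-\delta)$. Taking $C = 2/\sqrt{K}$ yields $\lambda^* = 4\sqrt{K}/(1-\delta)$, which is bounded in $T$ (so the previous estimate does apply), and the exponent becomes $-4/(1-\delta)$. Hence each probability is $\ll_{K,\delta} (\log T)^{-4/(1-\delta)}$, and a union bound over $k \in \{0, \dots, H-1\}$ gives
$$\mathbb{P}\!\left[\inf_{k} S'(k) \leq -\frac{2}{\sqrt{K}} \log \log T\right] \;\ll_{K,\delta}\; (\log T)^{\,1-\delta - 4/(1-\delta)},$$
which tends to $0$ since for $\delta \in (0, 1/2)$ one has $1-\delta - 4/(1-\delta) < 1 - 4 = -3$.

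\textbf{Main obstacle.} Conceptually the proof is routine; the only care needed is that the random-phase comparison error ($\sim T^{-1+o(1)}$) be small, which requires $\lambda$ to remain bounded as $T \to \infty$. This is automatic here because the optimal $\lambda$ depends only on $K$ and $\delta$. In particular, and in contrast with Propositions \ref{ridofpowers} and \ref{ridofphi} where $\lambda$ had to grow like $\sqrt{\log \log T}$, no preliminary truncation of $S'(k)$ is needed. The reason we get only the suboptimal constant $2/\sqrt{K}$ (rather than something matching the true Gaussian deviation $\sqrt{2(1-\delta)/K}\cdot \sqrt{\log\log T}$) is that we need $C$ large enough that $1-\delta - KC^2/(1-\delta) < 0$, and $C = 2/\sqrt{K}$ is a convenient, ample choice.
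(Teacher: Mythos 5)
Your overall strategy — exponential moment, Markov, union bound over $k$ — is the same as the paper's, and your bound on the random-phase model ($I_0(x)\leq e^{x^2/4}$, Mertens) is correct. But there is a genuine gap in the step where you control the off-diagonal error, and as a consequence you cannot dispense with the truncation $S_0(k,0)$ the way you claim.

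Concretely, writing $x_0:=e^{e^{(\log H)/K}}=e^{H^{1/K}}$ for the upper end of the prime range, the analogue of Lemma \ref{lemmaSV} gives, for the $n$-th moment, an off-diagonal error $\ll 2^n\bigl(\sum_{p\le x_0}p^{1/2}\bigr)^n/T\ll 2^n e^{3H^{1/K}n/2}/T$. Summing this over $n$ with weights $\lambda^n/n!$ produces a total error of size $T^{-1}\exp\!\bigl(O(\lambda\, e^{3H^{1/K}/2})\bigr)$, \emph{not} $T^{-1}\exp\!\bigl(O(\lambda (\log T)^{(1-\delta)/K})\bigr)$ as you wrote — you have replaced $e^{H^{1/K}}$ by $H^{1/K}$ in the exponent. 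Since $H^{1/K}\asymp(\log T)^{(1-\delta)/K}$, the correct quantity $e^{3H^{1/K}/2}=\exp\!\bigl(\tfrac{3}{2}(\log T)^{(1-\delta)/K}\bigr)$ grows faster than any power of $\log T$ (indeed $(\log T)^{(1-\delta)/K}\gg\log\log T$, so $e^{3H^{1/K}/2}\gg\log T$). Hence $T^{-1}\exp\!\bigl(O(\lambda\, e^{3H^{1/K}/2})\bigr)\to\infty$ for any fixed $\lambda>0$, and the exponential-moment comparison you rely on does not hold. This is precisely the difficulty the paper addresses by introducing $S_0(k,m)$: expanding $e^{\lambda S}$ into moments requires control of high-order moments, and for sums of $p^{-1/2}$ the off-diagonal contribution to $\mathbb{E}[S(k,0)^n]$ blows up for $n$ beyond roughly $\log T/H^{1/K}$. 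Bounding $\lambda$ does not help, because the blow-up is in $n$, not in $\lambda$.

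The fix is exactly the paper's: replace $S(k,0)$ by its truncation $S_0(k,0)$ and invoke Proposition \ref{TV}, whose proof cuts the exponential series at $r\leq(\log T)^{\delta/2}$ and uses the a priori bound $|\mathcal{S}_0|\leq 2KM(\log T)^{\delta/3}$ (from the truncation) to handle the tail; once $\mathbb{E}[e^{-2\sqrt{K}\,S_0(k,0)}]\ll_K H^2$ is established, the Markov/union-bound step and the observation that $S_0(k,0)\geq -2K^{-1/2}\log H$ implies $S(k,0)\geq -2K^{-1/2}\log H$ (because the truncation threshold $(\log T)^{\delta/3}$ exceeds $2K^{-1/2}\log H$) finish the proof as you intended. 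Your remark that "no preliminary truncation of $S'(k)$ is needed" is therefore the step that fails.
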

 \begin{proof}
 By Proposition \ref{TV}, we have, 
 for $|\lambda| \leq (\log T)^{\delta/100}$, $k \in \{0,1, \dots, H-1\}$, 
 $$\mathbb{E} [\exp (\lambda S_0(k,0))] 
 = \mathbb{E} [\exp (\lambda V(k,0))]
 + \mathcal{O} \left( \exp (-(\log T)^{\delta/4}) \right).$$
 On the other hand, 
\begin{align*}
 \mathbb{E} [\exp (\lambda V(k,0))]
 & = \prod_{p \in \mathcal{P} \cap [3, e^{e^{\log H/K}}]} \mathbb{E} [e^{\lambda 
 \Re X_p/\sqrt{p}}]
 \\ & =  \prod_{p \in \mathcal{P} \cap [3, e^{e^{\log H/K}}]} \mathbb{E} [\cosh(\lambda 
 \Re X_p/\sqrt{p})]
 \\ & \leq \prod_{p \in \mathcal{P} \cap [3, e^{e^{\log H/K}}]} \cosh(\lambda/\sqrt{p})
 \\  & \leq \exp \left( \frac{\lambda^2}{2} \sum_{p \in \mathcal{P} \cap [3, e^{e^{\log H/K}}]} \frac{1}{p} \right) 
 \\ & = \exp \left( \frac{\lambda^2}{2} 
 \left( \frac{\log H}{K} + \mathcal{O}(1) \right) \right).
\end{align*}
We deduce, for $\lambda = - 2 \sqrt{K}$, and 
$T$ large enough depending on $K$ and $\delta$ (in order to have $|\lambda| 
\leq (\log T)^{\delta/100}$), 
$$\mathbb{E} [\exp (- 2 \sqrt{K} S_0(k,0))] 
\ll_{K} H^{2},$$
$$\mathbb{P} [S_0(k,0) \leq - 2 K^{-1/2} \log H] \leq H^{-4} \mathbb{E} [\exp (- 2 \sqrt{K} S_0(k,0))] \ll_K H^{-2}.$$
By a union bound, we have with probability tending to $1$, $S_0(k,0) \geq - 2 K^{-1/2} 
\log H$, and then $S(k,0) \geq - 2 K^{-1/2} \log H$ for $T$ large enough depending on $K$ and $\delta$, since the truncation 
$-(\log T)^{\delta/3}$ is strictly below 
$-2 K^{-1/2} \log H$. This gives the desired result. 
 \end{proof}
 We deduce the lower bound part of Theorem \ref{main}:
 \begin{proposition}
Let us assume the Riemann hypothesis. For $h > 0$, $U$ uniform on $[0,1]$, $\epsilon \in (0,1)$, $\kappa \in \{1,-i,i\}$,  we have, with probability tending to $1$ when $T$ goes to infinity: 
 $$\underset{\tau \in [UT-h,UT + h] }{\sup}
\Re \left( \kappa  \log  \zeta \left( \frac{1}{2} + i 
 \tau \right) \right)
 \geq ( 1- \epsilon) \log \log T.$$
 \end{proposition}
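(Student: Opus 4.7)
The plan is to stitch together the three building blocks already established: Proposition \ref{comparison} (which lower-bounds the supremum of $\Re(\kappa \log\zeta)_+$ by a supremum of truncated prime sums), Proposition \ref{notbeginning} (which gives a large value for the ``tail'' prime sum at some random index $k$), and Proposition \ref{beginning} (which gives a uniform-in-$k$ lower bound for the ``head'' prime sum). The whole argument is essentially a choice of parameters.

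First I would fix $\epsilon \in (0,1)$ and choose an integer $K$ large enough that $(K-1)/K - 2/\sqrt{K} > 1 - \epsilon/2$, then choose $\delta \in (0,1/2)$ small enough that
$$(1-\delta)\frac{K-1}{K} - \frac{2}{\sqrt{K}} > 1 - \epsilon.$$
With these parameters fixed I set $H := \lfloor (\log T)^{1-\delta} \rfloor$, so the hypotheses of Proposition \ref{comparison} hold for $T$ large. Next I pick some $\omega$ in the interval $\bigl(1 - \epsilon + 2/\sqrt{K},\,(1-\delta)(K-1)/K\bigr)$. Proposition \ref{notbeginning} tells me that, with probability tending to $1$, there exists a random index $k_{*} = k_{*}(T,U) \in \{0,\dots,H-1\}$ realizing
$$\Re\!\Bigl(\kappa\!\!\sum_{p\in\mathcal{P}\cap(e^{e^{\log H/K}},\,e^H]} p^{-\frac{1}{2} - i(TU - h/2 + k_* h/H)}\Bigr) \geq \omega \log\log T.$$

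Then I would invoke Proposition \ref{beginning}, whose bound is uniform in $k$ and therefore applies in particular at $k = k_{*}$. Intersecting the two high-probability events and adding the two sums (the prime $p=2$ contributes only $O(1)$), I obtain, with probability tending to $1$,
$$\sup_{k\in\{0,\dots,H-1\}} \Re\!\Bigl(\kappa\!\sum_{p\in\mathcal{P}\cap[1,e^H]} p^{-\frac{1}{2} - i(TU - h/2 + kh/H)}\Bigr) \geq \Bigl(\omega - \frac{2}{\sqrt{K}}\Bigr)\log\log T + O(1).$$

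Plugging this into Proposition \ref{comparison} yields, still with probability tending to $1$,
$$\sup_{\tau \in [UT-h,\,UT+h]} \bigl(\Re(\kappa \log\zeta(\tfrac{1}{2}+i\tau))\bigr)_+ \geq \Bigl(\omega - \frac{2}{\sqrt{K}}\Bigr)\log\log T + O_h(\sqrt{\log\log T}).$$
By the choice of $\omega$ and $K$, the right-hand side exceeds $(1-\epsilon)\log\log T$ for $T$ large, and in particular it is positive, so the supremum of the positive part equals the supremum of $\Re(\kappa\log\zeta)$ itself. Taking $\kappa \in \{1,-i,i\}$ gives respectively the lower bounds for $\sup\Re\log\zeta$, $\sup\Im\log\zeta$, and (up to sign) the lower bound $-(1-\epsilon)\log\log T$ for $\inf\Im\log\zeta$, which together with the upper bounds of Theorems~\ref{unconditional} and \ref{upperboundimaginary} completes Theorem \ref{main}.

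I do not expect any genuine obstacle here: all the analytic work has already been done in Sections \ref{section:sum}--\ref{section:lowerbound}. The only mildly delicate point is verifying that the uniform bound from Proposition \ref{beginning} really can be applied at the \emph{same} random index $k_*$ produced by the second-moment argument in Proposition \ref{notbeginning}; but this is automatic because Proposition \ref{beginning} is stated as an infimum over all $k$.
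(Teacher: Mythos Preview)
Your proposal is correct and matches the paper's own proof essentially line for line: combine Propositions \ref{comparison}, \ref{notbeginning} and \ref{beginning}, choose $K$ large and $\delta$ small so that $(1-\delta)(K-1)/K - 2/\sqrt{K} > 1-\epsilon$, and finally drop the positive part once the right-hand side is seen to be positive. Your explicit tracking of the random index $k_*$ and the $O(1)$ contribution from $p=2$ makes the argument slightly more detailed than the paper's, but the approach is identical.
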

 \begin{proof}
 By combining Propositions \ref{comparison}, \ref{notbeginning} and \ref{beginning},  
 we have, with probability tending to $1$, 
$$ \underset{\tau \in [UT-h,UT + h] }{\sup}
\left(\Re \left( \kappa  \log  \zeta \left( \frac{1}{2} + i 
 \tau \right) \right) \right)_+ \geq 
 \left( \omega - \frac{2}{\sqrt{K}} \right)
 \log \log T - \mathcal{O}_h (\sqrt{\log \log T} ),$$
 for all $\omega < (1-\delta) (K-1)/K$.
 If we choose $\delta$ sufficiently small and $K$ sufficiently large, depending on $\epsilon$, we deduce, with probability tending to $1$, 
$$ \underset{\tau \in [UT-h,UT + h] }{\sup}
 \left(\Re \left( \kappa  \log  \zeta \left( \frac{1}{2} + i 
 \tau \right) \right) \right)_+   \geq 
 (1 - (\epsilon/2)) \log \log T 
 - \mathcal{O}_h (\sqrt{\log \log T}) 
 \geq (1 - \epsilon) \log \log T,$$
 for $T$ large enough depending on $h$ and 
 $\epsilon$. Since the lower bound is positive, we can then remove the positive part in $\Re( \kappa \log \zeta)$. 
 
 \end{proof}
\bibliographystyle{halpha}
\bibliography{maxzetabib}

\end{document}